\newcommand{\R}{\mathbb R}
\newcommand{\N}{{\mathbb N}}
\newcommand{\C}{\mathbb C}
\newcommand{\wtw}{\widetilde{W}}
\newcommand{\wtphi}{\widetilde{\phi}}
\newcommand{\wtpsi}{\widetilde{\psi}}
\newcommand{\p}{\partial}
\newcommand{\na}{\nabla}
\newcommand{\Hmeio}{\dot{H}^{1/2}}
\newcommand{\ff}{\varphi}
\newcommand{\ub}{\overline{u}}
\newcommand{\vb}{\overline{v}}
\numberwithin{equation}{section}
\newtheorem{theorem}{Theorem}[section]
\newtheorem{proposition}[theorem]{Proposition}
\newtheorem{remark}[theorem]{Remark}
\newtheorem{lemma}[theorem]{Lemma}
\newtheorem{corollary}[theorem]{Corollary}
\newtheorem{definition}[theorem]{Definition}
\begin{document}
\vglue-1cm \hskip1cm
\title[]{Scattering for a 3D
Coupled Nonlinear Schr\"odinger system}

\author[L. G. Farah]{Luiz G. Farah}
\address{ICEx, Universidade Federal de Minas Gerais, Av. Ant\^onio Carlos, 6627, Caixa Postal 702, 30123-970,
Belo Horizonte-MG, Brazil}
\email{lgfarah@gmail.com}

\author[A. Pastor]{Ademir Pastor}
\address{IMECC-UNICAMP, Rua S\'ergio Buarque de Holanda, 651, 13083-859, Cam\-pi\-nas-SP, Bra\-zil}
\email{apastor@ime.unicamp.br}


\subjclass[2010]{Primary 35A01, 35Q53 ; Secondary 35Q35}

\keywords{Schr\"odinger systems; Cauchy problem; Global well-posedness; Scattering}

\begin{abstract}
 We consider the three-dimensional cubic nonlinear
Schr\"odinger system
 \begin{equation*}
\begin{cases}
i\partial_tu+\Delta u+(|u|^2+\beta |v|^2)u=0,\\
i\partial_tv+\Delta v+(|v|^2+\beta |u|^2)v=0.
\end{cases}
\end{equation*}
Let $(P,Q)$ be any ground state solution of the above Schr\"odinger system. We show that for any initial data $(u_0,v_0)$ in
$H^1(\mathbb{R}^3)\times H^1(\mathbb{R}^3)$ satisfying $M(u_0,v_0)A(u_0,v_0)<M(P,Q)A(P,Q)$ and $M(u_0,v_0)E(u_0,v_0)<M(P,Q)E(P,Q)$, where $M(u,v)$ and $E(u,v)$ are the mass and energy (invariant quantities) associated to the system, the corresponding solution is global in $H^1(\mathbb{R}^3)\times H^1(\mathbb{R}^3)$ and scatters. Our approach  is in the same spirit of Duyckaerts-Holmer-Roudenko \cite{dhr}, where the authors considered the 3D cubic nonlinear Schr\"odinger equation.

\end{abstract}

\maketitle

\section{Introduction}\label{introduction}

This work is concerned with the following Cauchy problem associated with a three-dimensional cubic nonlinear Schr\"odinger (NLS) system:
\begin{equation}\label{3nls}
\begin{cases}
i\partial_tu+\Delta u+(|u|^2+\beta |v|^2)u=0,\\
i\partial_tv+\Delta v+(|v|^2+\beta |u|^2)v=0,\\
u(x,0)=u_0(x), \quad v(x,0)=v_0(x),
\end{cases}
\end{equation}
where $(x,t)\in \R^3\times \R$, $u=u(x,t)$ and $v=v(x,t)$ are complex-valued
functions, and $\beta>0$ is a real coupling parameter. The system appears, for instance, in the propagation of laser beams in birefringent Kerr medium in nonlinear optics (see \cite{ag}). Besides, a large quantity of nonlinear physical phenomena can be modelled using the system of equations in \eqref{3nls}, which we refrain from list them here (see e.g., \cite{ak}, \cite{ag}, and references therein).

Our main interest here concerns the behavior  of the solutions of \eqref{3nls}. Let us start by observing that the system conserves the quantities
\begin{equation}\label{masscon}
M(u,v)=\int_{\R^3}(|u|^2+|v|^2)dx,
\end{equation}
\begin{equation}\label{mocon}
F(u,v)=Im\int_{\R^3}(\overline{u}\nabla u+\overline{v}\nabla v)dx,
\end{equation}
and
\begin{equation}\label{econ}
E(u,v)=\frac{1}{2}\int_{\R^3}(|\na u|^2+|\na
v|^2)dx-\frac{1}{4}\int_{\R^3}(|u|^{4}+2\beta|uv|^{2}+|v|^{4})dx.
\end{equation}
This means if $(u(t),v(t))$ is a sufficiently regular solution of \eqref{3nls}, in some lifespan interval $\mathcal{I}$, then $M(u(t),v(t))=M(u_0,v_0)$, $F(u(t),v(t))=F(u_0,v_0)$, and $E(u(t),v(t))=E(u_0,v_0)$, for all $t\in \mathcal{I}$.
The invariants \eqref{masscon}-\eqref{econ} are the so called mass, momentum, and energy. In order to simplify notation, we also define
\begin{equation}\label{acon}
A(u,v)=\int_{\R^3}(|\na u|^2+|\na
v|^2)dx.
\end{equation}

The system \eqref{3nls} enjoys a scaling invariance symmetry, which says that if $(u,v)$ is a solution of \eqref{3nls}, then
\begin{equation}\label{scalsym}
u_\lambda(x,t)=\lambda u(\lambda x,\lambda^2t), \qquad v_\lambda(x,t)=\lambda
v(\lambda x,\lambda^2t),
\end{equation}
is also a solution with initial data $\big(\lambda u_0(\lambda x), \lambda
v_0(\lambda x)\big)$, for any $\lambda>0$.  A simple computation reveals that
$$
\|u_\lambda(\cdot,0)\|_{\Hmeio}+\|v_\lambda(\cdot,0)\|_{\Hmeio}=\|u_0\|_{\Hmeio}+\|v_0\|_{\Hmeio},
$$
where $\Hmeio=\Hmeio(\R^3)$ is the homogeneous $L^2$-based Sobolev space of order $1/2$  (see notations below for the definition).
Thus, $\Hmeio(\R^3)\times\Hmeio(\R^3)$ is the scale-invariant Sobolev space for system \eqref{3nls}. Such a space is,
therefore, called \textit{critical space}. 

It is easy to see that the quantities $M(u,v)E(u,v)$ and $M(u,v)A(u,v)$ are also invariant under the same scaling, indeed $M(u_\lambda,v_\lambda)E(u_\lambda,v_\lambda)= M(u,v)E(u,v)$ and also $M(u_\lambda,v_\lambda)A(u_\lambda,v_\lambda)=M(u,v)A(u,v)$. In particular, we have
\begin{equation}\label{scalL2}
M(u_\lambda,v_\lambda)=\frac{1}{\lambda}M(u,v).
\end{equation} 

Another invariance associated with \eqref{3nls} is the so called \textit{Galilean invariance}, that is, if $(u,v)$ is a solution  then, for any $\xi_0\in \R^3$,
\begin{equation}\label{GaIn}
(e^{i(x\cdot\xi_0-t|\xi|^2)}u(x-2t\xi_0, t), e^{i(x\cdot\xi_0-t|\xi|^2)}v(x-2t\xi_0, t))
\end{equation} 
is also a solution with initial data $(e^{ix\cdot\xi_0}u_0(x), e^{ix\cdot\xi_0}v_0(x))$.

The system \eqref{3nls} has some very special solutions. Indeed, if we look for  solutions of the form
$$
(u(x,t),v(x,t))=(e^{it}P(x), e^{it}Q(x))
$$
then $(P,Q)$ must solve the following 3D elliptic system:
\begin{equation}\label{ellip1}
\begin{cases}
-\Delta P+P-(|P|^{2}+\beta |Q|^{2})P=0,\\
-\Delta Q+Q-(|Q|^{2}+\beta |P|^{2})Q=0.
\end{cases}
\end{equation}

A nontrivial solution $(P,Q)\neq (0,0)$ which has the least energy level is commonly refereed to as a \textit{ground state solution}. The existence of positive, radially symmetric, ground state solutions for \eqref{ellip1} was studied, for instance, by Maia-Montefusco-Pellacci \cite{mmp}. Such a solution plays a crucial role in our further analysis (threshold for global existence and blow up of the solution). Throughout the paper $(P,Q)$ always will denote the ground state solution of \eqref{ellip1}. Although uniqueness of the ground state is not known for any $\beta$ (see \cite{wy}), this will not be an issue to our purpose (see also our Remark \ref{remgn} below).

A local well-posedness result for \eqref{3nls} can be established by combining the well-known Strichatz estimates (see Lemma \ref{strilem} below) with the contraction mapping principle (see \cite{caz}). In addition, such a local solution can be extended globally in time, if one impose some additional condition on the initial data. More precisely, in \cite{P2015} the second author obtained the following global well-posedness result (see \cite{hr} and \cite{hr1} for the similar result concerning the NLS equation).

\begin{theorem}\label{globalPastor}
Let  $(u,v)\in C((-T_*,T^*);H^1(\R^3)\times H^1(\R^3))$ be the solution of \eqref{3nls}
with initial data $(u_0,v_0)\in H^1(\R^3)\times H^1(\R^3)$, where $I:=(-T_*,T^*)$ is the
maximal time interval of existence. Assume that
\begin{equation}\label{EM}
M(u_0,v_0)E(u_0,v_0)<M(P,Q)E(P,Q).
\end{equation}
If
\begin{equation}\label{EM1}
M(u_0,v_0)A(u_0,v_0)<M(P,Q)A(P,Q),
\end{equation}
then
\begin{equation}\label{EM2}
M(u(t),v(t))A(u(t),v(t))<M(P,Q)A(P,Q)
\end{equation}
and the solution exists globally in time, that is, $I=(-\infty, \infty)$.
\end{theorem}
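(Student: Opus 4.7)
The plan is to prove \eqref{EM2} by a continuity/trapping argument based on a sharp Gagliardo-Nirenberg inequality adapted to the coupled system, and then to deduce global existence by combining \eqref{EM2} with mass conservation to obtain a uniform $H^1\times H^1$ bound; the blow-up alternative will then force $I=\R$.

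First, I would establish a sharp inequality of the form
$$\int_{\R^3}(|u|^4+2\beta|uv|^2+|v|^4)\,dx\;\leq\;C_{GN}\,M(u,v)^{1/2}A(u,v)^{3/2}$$
for every $(u,v)\in H^1(\R^3)\times H^1(\R^3)$, with the ground state $(P,Q)$ of \eqref{ellip1} as an extremizer. Testing \eqref{ellip1} against $(P,Q)$ yields the Nehari-type identity $A(P,Q)+M(P,Q)=K(P,Q)$, where $K(P,Q)$ denotes the right-hand integral evaluated at $(P,Q)$. Differentiating the action along the dilations $(P(\cdot/\lambda),Q(\cdot/\lambda))$ at $\lambda=1$ gives a Pohozaev-type identity, and solving the two identities together produces the closed-form relations
$$A(P,Q)=3M(P,Q),\qquad K(P,Q)=4M(P,Q),\qquad E(P,Q)=\tfrac{1}{2}M(P,Q),$$
so that $M(P,Q)E(P,Q)=\tfrac{1}{6}M(P,Q)A(P,Q)$ and $C_{GN}=4\,\bigl[3^{3/2}M(P,Q)\bigr]^{-1}$.

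Next, I would insert the sharp inequality into the conserved energy \eqref{econ} and multiply by the conserved mass, obtaining
$$M(u_0,v_0)E(u_0,v_0)=M(u(t),v(t))E(u(t),v(t))\;\geq\;\phi\bigl(M(u(t),v(t))A(u(t),v(t))\bigr),$$
where $\phi(y):=\tfrac{1}{2}y-\tfrac{1}{4}C_{GN}y^{3/2}$. A direct computation shows that $\phi$ attains its unique maximum at $y_\ast:=M(P,Q)A(P,Q)$ with value $\phi(y_\ast)=M(P,Q)E(P,Q)$, and is strictly increasing on $[0,y_\ast]$. Hypothesis \eqref{EM} forces $\phi(y(t))\leq M(u_0,v_0)E(u_0,v_0)<\phi(y_\ast)$, so the sub-level set $\{y\geq 0:\phi(y)\leq M(u_0,v_0)E(u_0,v_0)\}$ splits into two disjoint intervals $[0,y_-]\cup[y_+,\infty)$ with $y_-<y_\ast<y_+$. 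Hypothesis \eqref{EM1} places $y(0)$ in $[0,y_-]$, and the continuity of $t\mapsto y(t):=M(u(t),v(t))A(u(t),v(t))$ on $I$ then prevents $y(t)$ from ever crossing the gap $(y_-,y_+)$. Hence $y(t)\leq y_-<y_\ast$ for all $t\in I$, which is precisely \eqref{EM2}; combined with mass conservation this yields a uniform bound on $A(u(t),v(t))$ and hence on the $H^1\times H^1$ norm, so the blow-up alternative forces $I=(-\infty,\infty)$.

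The step I expect to be the main obstacle is the sharp Gagliardo-Nirenberg inequality itself, together with the identification of the extremizer. In the scalar cubic case this is Weinstein's classical result, but for the coupled system uniqueness of the ground state is not known for general $\beta>0$ (as noted in the introduction). One must therefore verify that \emph{any} ground state saturates the inequality and that the Pohozaev identities above pin down $M(P,Q)$, $A(P,Q)$, and $K(P,Q)$ (and hence $C_{GN}$ and the thresholds in \eqref{EM}--\eqref{EM1}) independently of which extremizer is chosen, so that the statement of the theorem is well posed regardless of the non-uniqueness.
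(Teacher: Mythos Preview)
Your proposal is correct and follows the standard Holmer--Roudenko trapping argument. Note, however, that the paper does not give its own proof of this theorem: it is quoted from \cite{P2015}. That said, the tools you invoke are exactly those the paper records and uses elsewhere: the sharp vector Gagliardo--Nirenberg inequality is Lemma~\ref{gnlemma} (with $C_{GN}=K_{GN}$), the Pohozaev relations and the independence of the threshold from the choice of ground state are discussed in Remark~\ref{remgn}, and your function $\phi(y)$ is, after the substitution $y=x^2$, precisely the function $f(x)=\tfrac{x^2}{2}-\tfrac{K_{GN}}{4}x^3$ used in the proof of Lemma~\ref{grolemma}. So your sketch is in full agreement with the paper's framework and with the argument in \cite{P2015}.
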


\begin{remark}\label{Rema}
Note that under the assumptions of Theorem \ref{globalPastor}, we also have 
$$
(u,v)\in L^{\infty}(\R;H^1(\R^3)\times H^1(\R^3)),
$$
in view of \eqref{EM2} and the fact that the mass is conserved.
\end{remark}

\begin{remark}\label{BlowCond}
It was also proved in \cite[Theorem 1.2]{P2015} that if $u_0$ and $v_0$ are radial functions satisfying $M(u_0,v_0)E(u_0,v_0)<M(P,Q)E(P,Q)$ and also $M(u_0,v_0)A(u_0,v_0)>M(P,Q)A(P,Q)$ then the solution blows up in finite time.
\end{remark}

Our goal in the present work is to show that under the assumptions \eqref{EM} and \eqref{EM1} the global solution $(u(t),v(t))$ given in Theorem \ref{globalPastor}  scatters in the  sense of Definition \ref{scatdef} below. In what follows, we denote by $e^{it\Delta}$  the unitary group associated with the linear Schr\"odinger equation $iu_t+\Delta u=0$.

\begin{definition}\label{scatdef}
We say that a global solution $(u(t),v(t))$ scatters forward in time if there exist $\phi^+,\psi^+\in H^1(\R^3)$ such that 
\begin{equation}\label{defscat}
\lim_{t\to+\infty}\|(u(t),v(t))-(e^{it\Delta}\phi^+,e^{it\Delta}\psi^+)\|_{H^1\times H^1}=0.
\end{equation}
Also, we say that  $(u(t),v(t))$ scatters backward in time if there exist $\phi^-,\psi^-\in H^1(\R^3)$ such that 
\begin{equation}\label{defscat-}
\lim_{t\to-\infty}\|(u(t),v(t))-(e^{it\Delta}\phi^-,e^{it\Delta}\psi^-)\|_{H^1\times H^1}=0.
\end{equation}
\end{definition}

The main result of our study is the following one.
\begin{theorem}\label{globalscatters}
Assume that  $(u_0,v_0)\in \mathcal{K}$, where
\begin{equation*}
\mathcal{K}:=\left\{(u_0, v_0)\in H^1(\R^3)\times  H^1(\R^3):\;  \textrm{relations} \; \eqref{EM} \; \textrm{and} \; \eqref{EM1} \; \textrm{hold}\;\right\}.
\end{equation*}
 Then the corresponding solution $(u(t),v(t))$ of system \eqref{3nls} exists globally in time and scatters both forward and backward in time.
\end{theorem}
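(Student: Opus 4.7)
The plan is to follow the Kenig-Merle concentration-compactness/rigidity scheme, as implemented by Duyckaerts-Holmer-Roudenko \cite{dhr} for the 3D cubic NLS, adapted componentwise to the system \eqref{3nls}. First I would set up the Strichartz machinery for $(u,v)$: since the linear flow acts diagonally, the standard Schr\"odinger Strichartz estimates apply to each coordinate, yielding local well-posedness in $H^1\times H^1$, a small-data scattering theory measured in a Strichartz norm of type $S(\Hmeio)$, a long-time perturbation lemma for the coupled equation, and the scattering criterion: if the global solution of Theorem \ref{globalPastor} has finite $S(\Hmeio)$-norm on $\R$, then scattering follows by Duhamel. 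Combined with the Gagliardo-Nirenberg type inequality associated to \eqref{ellip1} (for which $(P,Q)$ is the extremizer, cf.\ \cite{mmp}), relations \eqref{EM}-\eqref{EM1} yield an energy-trapping bound
$$
\sup_{t\in\R} M(u,v)A(u,v)\leq (1-\delta_0)M(P,Q)A(P,Q)
$$
for some $\delta_0>0$ depending only on the gap in \eqref{EM}; this is the coercivity input for the rigidity step.

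Next, suppose for contradiction that some initial data in $\mathcal{K}$ produces a non-scattering solution and set
$$
m_c:=\inf\{M(u_0,v_0)E(u_0,v_0):(u_0,v_0)\in\mathcal K \text{ and }(u,v) \text{ does not scatter}\}.
$$
The small-data theory forces $m_c>0$ while the hypothesis yields $m_c<M(P,Q)E(P,Q)$. Using a linear profile decomposition for bounded sequences in $H^1\times H^1$ in the spirit of Keraani (applied to each component, recording scales $\lambda_n^j$, translations $x_n^j$, times $t_n^j$, and Galilean boosts $\xi_n^j$), together with the mass/energy Pythagorean expansions, the scattering criterion, and the perturbation lemma, I would extract a \emph{critical element}: a global solution $(u_c,v_c)\in\mathcal K$ with $M(u_c,v_c)E(u_c,v_c)=m_c$ whose trajectory is precompact in $H^1\times H^1$ modulo translations, i.e., there is a continuous $x:\R\to\R^3$ such that $\{(u_c(\cdot-x(t),t),v_c(\cdot-x(t),t)):t\in\R\}$ is precompact in $H^1\times H^1$. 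The Galilean invariance \eqref{GaIn} is used to normalize the momentum $F(u_c,v_c)=0$, which neither worsens \eqref{EM} nor \eqref{EM1} and forces the drift $|x(t)|=o(t)$.

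The rigidity step then rules out such a nonzero critical element. Precompactness provides, for every $\varepsilon>0$, a radius $R(\varepsilon)$ with $\int_{|x-x(t)|\geq R}(|\na u_c|^2+|\na v_c|^2+|u_c|^2+|v_c|^2)\,dx\leq\varepsilon$. Applying a localized virial identity on a ball of radius $\gg R$ to the weighted momentum $\int\ff_R(x)\operatorname{Im}(\ub_c\na u_c+\vb_c\na v_c)\cdot\nabla\ff_R\,dx$ and time-averaging over $[0,T]$, the energy-trapping bound from the first paragraph delivers a lower bound of order $T$, while the boundary terms together with $|x(t)|=o(t)$ contribute only $o(T)$ as $T\to\infty$. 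This forces $(u_c,v_c)\equiv 0$, contradicting $m_c>0$. Time reversal yields the backward-in-time statement.

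The hard part will be the profile decomposition and the construction of the critical element for the two-component system. Both components must be decomposed along possibly distinct sequences of scales, translations, time shifts, and Galilean parameters, and one must establish asymptotic orthogonality for the coupling term $\b\int|u|^2|v|^2$ in addition to the decoupled quartic terms. Tracking $M$, $E$ and the non-scale-invariant $A$ through this decomposition is what identifies the single profile giving the critical element. The non-uniqueness of $(P,Q)$ noted after \eqref{ellip1} is not an obstruction since only the variational thresholds $M(P,Q)E(P,Q)$ and $M(P,Q)A(P,Q)$ enter, and these are the sharp Gagliardo-Nirenberg constants, which are intrinsic.
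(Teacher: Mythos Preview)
Your overall strategy matches the paper's: Strichartz/small-data/perturbation theory, definition of a critical threshold, profile decomposition to extract a critical element, zero-momentum normalization, sublinear drift $|x(t)|=o(t)$, and localized-virial rigidity. The key divergence is in the profile decomposition. You propose decomposing each component separately with its own scales $\lambda_n^j$, translations $x_n^j$, time shifts $t_n^j$, and Galilean boosts $\xi_n^j$, and flag the orthogonality of the coupling term $\b\int|u|^2|v|^2$ across possibly distinct parameter families as the hard part. The paper instead proves a \emph{joint} profile decomposition (its Theorem~\ref{profdec}) in which both bounded $H^1$ sequences are expanded along the \emph{same} shifts $(t_n^j,x_n^j)$, with no scaling or Galilean parameters at all. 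This is possible because the sequences are bounded in $H^1$ (not merely $\dot H^{1/2}$), which rules out concentration of scales and frequencies; the argument follows the $H^1$ profile decomposition of \cite{dhr}, run simultaneously on the pair by tracking the maximum of the two $L^\infty_t L^3_x$ quantities. With shared parameters, the Pythagorean expansion of the cross term $\|\phi_n\wtphi_n\|_{L^2}^2$ (the paper's Lemmas~\ref{lemma2}--\ref{lemma3}) and hence of the full energy \eqref{ED} becomes routine, and the ``more than one profile'' contradiction goes through without having to match up two independent parameter families.

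Your route is not wrong, but it carries unnecessary baggage: the scales and boosts would ultimately be shown trivial by the $H^1$ bound, and allowing distinct parameters for the two components forces you to prove decoupling of mixed profiles $\psi^j\wtpsi^{\,k}$ across unrelated orthogonal families, which is delicate. The paper's joint decomposition sidesteps all of this and is the cleaner execution of the same plan.
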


Our strategy to prove Theorem \ref{globalscatters} is to adapt the ideas of Duyckaerts-Holmer-Roudenko \cite{dhr} (see also \cite{hr}), where the authors considered the 3D cubic nonlinear Schr\"odinger equation. It worth mentioning that the core of the approach in \cite{dhr} was prior introduced by Kenig-Merle \cite{km} in the study of the energy-critical Schr\"odinger equation. To the best of our knowledge, our paper is the first one to deal with these techniques in order to show scattering for coupled systems of nonlinear Schr\"odinger equations under the conditions given in Theorem \ref{globalPastor} (see \eqref{EM}-\eqref{EM1} and also Remark \ref{BlowCond}). For the sake of simplicity, we will only prove scattering forward in time; the case of backward in time follows the same plan.

It should be noted that, by using a different approach, scattering for \eqref{3nls} has appeared in Xu \cite{Xu12}. However, as we describe below our result extend the one in \cite{Xu12}. Indeed, consider the functionals
\[
\begin{split}
J(u,v)&=\frac{1}{2}\int_{\R^3}(|\na u|^2+|\na
v|^2+|u|^2+|v|^2)dx-\frac{1}{4}\int_{\R^3}(|u|^{4}+2\beta|uv|^{2}+|v|^{4})dx\\
& =E(u,v)+\frac{1}{2}M(u,v)
\end{split}
\]
and
\[
\begin{split}
K(u,v)&=2\int_{\R^3}(|\na u|^2+|\na
v|^2)dx-\frac{3}{2}\int_{\R^3}(|u|^{4}+2\beta|uv|^{2}+|v|^{4})dx\\
&=2A(u,v)-\frac{3}{2}\int_{\R^3}(|u|^{4}+2\beta|uv|^{2}+|v|^{4})dx.
\end{split}
\]
Also, let
$$
J_0:=\inf\Big\{J(u,v): \,(u,v)\in H^1(\R^3)\times  H^1(\R^3)\setminus\{(0,0)\}, K(u,v)=0\Big\}\equiv J(P,Q)>0
$$
and define the sets
$$
K^+=\{(u,v)\in H^1(\R^3)\times  H^1(\R^3): J(u,v)<J_0, K(u,v)\geq0\}
$$
$$
K^-=\{(u,v)\in H^1(\R^3)\times  H^1(\R^3: J(u,v)<J_0, K(u,v)<0\}.
$$
The author in \cite{Xu12} then showed if $(u_0,v_0)\in K^+$, then the corresponding solution is global and scatters. Also, if $(u_0,v_0)\in K^-$ is radial or $(xu_0,xv_0)\in L^2(\R^3)\times L^2(\R^3)$, then the solution blows up in finite time. In particular, his result establishes scattering only for initial data in $K^+$. However, as we show in our Appendix,
\begin{equation}\label{subxu}
K^+\varsubsetneq \mathcal{K},
\end{equation}
by proving that the set where we have scattering is larger than the one obtained in \cite{Xu12}.

At least from the mathematical point of view, system \eqref{3nls} can be generalized to 
\begin{equation}\label{3nlsgen}
\begin{cases}
i\partial_tu+\Delta u+\mu(|u|^{2p}+\beta |u|^{p-1}|v|^{p+1})u=0,\\
i\partial_tv+\Delta v+\mu(|v|^{2p}+\beta |u|^{p+1}|v|^{p-1})v=0,\\
u(x,0)=u_0(x), \quad v(x,0)=v_0(x),
\end{cases}
\end{equation}
where $(x,t)\in\R^n\times \R$, $u=u(x,t)$, $v=v(x,t)$, $p>1$, and $\mu=1$ (\textit{focusing case}) or $\mu=-1$ (\textit{defocusing case}). A global well-posedness for \eqref{3nlsgen} in the focusing case, in the same spirit of Theorem \ref{globalPastor}, was established in \cite{lwl}. The sharp threshold for the global existence now depends on the ground state solutions of the elliptic system associated with \eqref{3nlsgen}. The interested reader will also find some related work in \cite{chb}, \cite{ll}, \cite{maz}, \cite{ntds}, \cite{pvp}, \cite{so}, and references therein. We believe that our results in Theorem \ref{globalscatters} can also be generalized to this case.  A possible approach to obtain the results could be to extend the results in \cite{fxc} and \cite{gue}, where the authors dealt with the multi-dimensional Schr\"odinger equation with a power-law nonlinearity. This will be issue for further investigation.

In the defocusing case, scattering theory for \eqref{3nlsgen} has already been established in  \cite{ct}. The approach is completely different from ours; most of the arguments are based on Morawetz-type identities and inequalities and the corresponding Morawetz estimates. In that case, all solutions with initial data in $H^1(\R^3)\times  H^1(\R^3)$ are global and scatters forward and backward in time.

The paper is organized as follows. In Section \ref{sec2} we introduce some notation and recall the Strichartz estimates for the Schr\"odinger equation. In Section \ref{sec3}, we first recall the sharp Gagliardo-Nirenberg inequality; which is fundamental to obtain our results. In the sequel we recall and prove some results concerning the Cauchy problem \eqref{3nls}. In particular a small data global result and a long time perturbation theory are established. Also, we recall a sufficient condition for proving the scattering. In Section \ref{sec4}, we establish the profile and energy decomposition of bounded sequences in $H^1(\R^3)$. In particular, in the profile decomposition, we show that space and time shifts can be taken to be the same for two distinct sequences. The proof Theorem \ref{globalscatters} is essentially initiated in Section \ref{ConstCri}, where we reformulate the conclusion in an equivalent way and prove the existence of the so called critical solution. In Section \ref{sec6}, we complete the proof of Theorem \ref{globalscatters}, by showing a Liouville-type theorem and concluding that the critical solution constructed previously cannot exist.

\section{Notations and Preliminaries}\label{sec2}

Let us start this section by introducing the notation used throughout the paper. We use $c$ to denote various constants that may vary line by line. For any positive quantities $a$ and $b$, the notation $a \lesssim b$ means
that $a \leq cb$, with $c$ uniform with respect to the set where $a$ and $b$ vary. Given a complex number $z$, we use $Re(z)$ and $Im(z)$ to denote, respectively, the real and imaginary parts of $z$. Otherwise is stated, $\int f$ always mean integration of
the function $f$ over all $\R^3$.

We use $\|\cdot\|_{L^p_x}$ or $\|\cdot\|_{L^p}$ to denote the $L^p(\R^3)$ norm. If necessary, we use subscript to inform which variable we are concerned with. The mixed norms $L^q_tL^r_x$ of $f=f(x,t)$ is defined as
\begin{equation*}
\|f\|_{L^q_tL^r_x}= \left(\int_{-\infty}^{+\infty} \|f(\cdot,t)\|_{L^r_x}^q dt \right)^{1/q}
\end{equation*}
with the usual modifications when $q =\infty$ or $r=\infty$.  Similarly, we also define the norms in the spaces  $L^r_xL^q_t$. When $q=r$, for short we sometimes denote $\|f\|_{L^r_tL^r_x}$ by
$\|f\|_{L^r_{x,t}}$. For an interval $I\subset\R$, in a similar fashion we define $\|f\|_{L^q_IL^r_x}$.

The spatial Fourier transform of $f(x)$ is given by
\begin{equation*}
\hat{f}(\xi)=\int e^{-ix\cdot\xi}f(x)dx.
\end{equation*}


For any $s\in\R$, we shall define $D^s$ and $J^s$ to be, respectively, the Fourier
multiplier with symbol $|\xi|^s$ and $\langle \xi \rangle^s = (1+|\xi|)^s$.
In this case, the norm in the Sobolev spaces $H^s:=H^s(\R^3)$ and $\dot{H}^s:=\dot{H}^s(\R^3)$
are given, respectively, by
\begin{equation*}
\|f\|_{H^s}\equiv \|J^sf\|_{L^2_x}=\|\langle \xi
\rangle^s\hat{f}\|_{L^2_{\xi}}, \qquad \|f\|_{\dot{H}^s}\equiv
\|D^sf\|_{L^2_x}=\||\xi|^s\hat{f}\|_{L^2_{\xi}}.
\end{equation*}
The product space $H^s\times H^s$ is equipped with the norm $\|(f,g)\|^2_{H^s\times H^s}\equiv \|f\|^2_{H^s}+\|g\|^2_{H^s}$. In an analogous way we define the norm in $\dot{H}^s\times \dot{H}^s$. 

In order to  study the Cauchy problem associated with the NLS system \eqref{3nls}, let us
introduce some functional spaces. Given any $s\in\R$, we say that a pair $(q,r)$ is
$\dot{H}^s$ admissible if
$$
\frac{2}{q}+\frac{3}{r}=\frac{3}{2}-s.
$$
In what follows, we denote
$$
\|u\|_{S(L^2)}=\sup\{\|u\|_{L^q_tL^r_x};\; (q,r)\, {\rm is}\, L^2\; {\rm
admissible},\; 2\leq r\leq6,\; 2\leq q\leq\infty\}
$$
and
$$
\|u\|_{S(\Hmeio)}=\sup\{\|u\|_{L^q_tL^r_x};\;(q,r)\,{\rm is}\, \Hmeio\; {\rm
admissible}, \,3\leq r\leq6^-,\; 4^+\leq q\leq\infty\}.
$$
We also need to consider the dual norms:
$$
\|u\|_{S'(L^2)}=\inf\{\|u\|_{L^{q'}_tL^{r'}_x};\; (q,r)\,{\rm is}\, L^2\;
{\rm admissible}, \, 2\leq q\leq\infty,\; 2\leq r\leq6\},
$$
and
$$
\|u\|_{S'(\dot{H}^{-1/2})}=\inf\{\|u\|_{L^{q'}_tL^{r'}_x};\;(q,r)\,{\rm is}\,
\dot{H}^{-1/2}\; {\rm admissible},\, \left(\frac{4}{3}\right)^+\leq
q\leq2^-,\; 3^+\leq r\leq6^-\}.
$$

When a time interval $I\subset\R$ is given, we use ${S'(L^2;I)}$ to inform
that the temporal integral is evaluated over $I$, that is, we replace
$\|\cdot\|_{L^{q'}_tL^{r'}_x}$ by $\|\cdot\|_{L^{q'}_IL^{r'}_x}$. Similarly
to the other function spaces. For convenience, we define the space $S(L^2)\times S(L^2)$ endowed with the norm $\|(u,v)\|_{S(L^2)\times S(L^2)}\equiv \|u\|_{S(L^2)}+\|v\|_{S(L^2)}$. The spaces $S(\Hmeio)\times S(\Hmeio)$ and $S'(\dot{H}^{-1/2})\times S'(\dot{H}^{-1/2})$ are defined in the same way.

Next, we recall the well-known Strichartz inequalities.

\begin{lemma}\label{strilem}
With the above notation we have:
\begin{itemize}
    \item[(i)] (Linear estimates).
    $$
    \|e^{it\Delta}u_0\|_{S(L^2)}\lesssim\|u_0\|_{L^2},
    $$
    and
    $$
    \|e^{it\Delta}u_0\|_{S(\Hmeio)}\lesssim\|u_0\|_{\Hmeio}.
    $$
    \item[(ii)] (Inhomogeneous estimates).
    $$
   \left\| \int_0^te^{i(t-t')\Delta}f(\cdot,t')dt'\right\|_{S(L^2)}\lesssim\|f\|_{S'(L^2)},
   $$
   $$
     \left\| \int_0^te^{i(t-t')\Delta}f(\cdot,t')dt'\right\|_{S(\Hmeio)}\lesssim\|D^{1/2}f\|_{S'(L^2)},
   $$
   and
    $$
     \left\| \int_0^te^{i(t-t')\Delta}f(\cdot,t')dt'\right\|_{S(\Hmeio)}\lesssim\|f\|_{S'(\dot{H}^{-1/2})}.
   $$
\end{itemize}

\end{lemma}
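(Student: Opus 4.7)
The plan is to reduce everything to the classical Keel--Tao machinery starting from the pointwise dispersive bound
\[
\|e^{it\Delta}u_0\|_{L^\infty_x}\lesssim |t|^{-3/2}\|u_0\|_{L^1_x},
\]
which is immediate from the explicit kernel of the free Schrödinger group in $\R^3$, together with the $L^2$ conservation $\|e^{it\Delta}u_0\|_{L^2_x}=\|u_0\|_{L^2_x}$. Interpolating these two endpoints yields $\|e^{it\Delta}u_0\|_{L^r_x}\lesssim|t|^{-3(1/2-1/r)}\|u_0\|_{L^{r'}_x}$ for $2\le r\le\infty$, and then a standard $TT^*$ argument combined with the Hardy--Littlewood--Sobolev inequality in the time variable gives
\[
\|e^{it\Delta}u_0\|_{L^q_tL^r_x}\lesssim\|u_0\|_{L^2_x}
\]
for every $L^2$-admissible pair $(q,r)$ with $2/q+3/r=3/2$, $2\le r\le 6$, $2\le q\le\infty$. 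Taking the supremum over all such admissible pairs furnishes the first inequality in (i).

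For the $\dot H^{1/2}$ linear estimate, one simply uses the fact that the Fourier multiplier $D^{1/2}$ commutes with $e^{it\Delta}$: if $(q,r)$ is $\dot H^{1/2}$-admissible with $2/q+3/r=1$, then $(q,\tilde r)$ is $L^2$-admissible for $\tilde r$ defined by $1/\tilde r=1/r+1/6$ (within the stated admissibility range), and Sobolev embedding $\dot H^{1/2}\hookrightarrow L^3$ together with the $L^2$ Strichartz estimates gives
\[
\|e^{it\Delta}u_0\|_{L^q_tL^r_x}=\|e^{it\Delta}u_0\|_{L^q_tL^r_x}\lesssim\|D^{1/2}e^{it\Delta}u_0\|_{L^q_tL^{\tilde r}_x}\lesssim\|D^{1/2}u_0\|_{L^2_x}=\|u_0\|_{\dot H^{1/2}}.
\]
Taking the sup over the admissible range stated in the definition of $S(\dot H^{1/2})$ yields the second bound in (i).

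For the inhomogeneous estimates in (ii), the first one is the standard dual Strichartz bound: by duality of the homogeneous estimate, the adjoint operator $f\mapsto \int_{\R}e^{-it'\Delta}f(\cdot,t')dt'$ maps $S'(L^2)$ into $L^2_x$, and combining with the homogeneous bound gives
\[
\Big\|\int_{\R}e^{i(t-t')\Delta}f(\cdot,t')dt'\Big\|_{S(L^2)}\lesssim\|f\|_{S'(L^2)}.
\]
The retarded version with $\int_0^t$ in place of $\int_{\R}$ is then recovered by the Christ--Kiselev lemma (noting that the off-diagonal admissible pairs preclude the endpoint $(q,r)=(2,6)$ issue here). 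The second inhomogeneous bound follows by commuting $D^{1/2}$ with the group and then applying the first. The third bound uses the same scheme with a different duality pairing: one dualizes the $\dot H^{1/2}$ homogeneous estimate, which maps $S'(\dot H^{-1/2})$ into $\dot H^{-1/2}$, and then composes with the homogeneous $\dot H^{1/2}$ Strichartz inequality, using $(\dot H^{1/2})^*=\dot H^{-1/2}$ and the fact that $\dot H^{-1/2}$-admissibility $2/q+3/r=2$ is the dual condition of $\dot H^{1/2}$-admissibility.

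The main technical point will be checking that the range restrictions on $(q,r)$ in the definitions of $S(\dot H^{1/2})$ and $S'(\dot H^{-1/2})$ (the somewhat ad hoc bounds $3\le r\le 6^-$, $4^+\le q\le\infty$, etc.) are compatible with both the endpoint limitations of Keel--Tao's theorem and the applicability of the Christ--Kiselev lemma; the $\pm$ notation precisely avoids the forbidden endpoints, so one stays in the non-endpoint Strichartz regime throughout and no further argument is needed. Everything else is a routine assembly of standard ingredients, and the proof reduces to quoting \cite{caz} or the Keel--Tao theorem directly; we therefore omit the details.
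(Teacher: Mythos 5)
The paper offers no argument of its own here; it simply cites \cite{caz} and \cite{hr}, so your sketch must be measured against what those references actually prove. For part (i) and the first two estimates of part (ii) your outline is correct and standard: the dispersive bound plus $TT^*$ and Hardy--Littlewood--Sobolev (with Keel--Tao for the endpoint), then Sobolev embedding $\dot{W}^{1/2,\tilde r}\hookrightarrow L^r$, $1/r=1/\tilde r-1/6$, to convert $L^2$-admissible pairs into $\dot{H}^{1/2}$-admissible ones, and commutation of $D^{1/2}$ with the group for the second inhomogeneous bound. One small inaccuracy: the endpoint $(q,r)=(2,6)$ \emph{is} included in the definitions of $S(L^2)$ and $S'(L^2)$ used here, so for the first inhomogeneous estimate Christ--Kiselev is not available on the diagonal $q=\tilde q'=2$; one must instead invoke the fact that Keel--Tao's theorem already yields the retarded estimate for admissible pairs.

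The genuine gap is the third inhomogeneous estimate, which is the one the paper actually uses (e.g.\ in the proof of Proposition \ref{LTstabteo} with the $\dot H^{-1/2}$-admissible pair $(10/7,5)$). Your ``dualize and compose'' scheme fails for it. First, the exponents do not match: the adjoint of $e^{it\Delta}\colon \dot{H}^{1/2}\to L^q_tL^r_x$ with $2/q+3/r=1$ is bounded on $L^{q'}_tL^{r'}_x$ for those conjugate pairs, which are not the spaces entering $S'(\dot{H}^{-1/2})$ (those are conjugates of pairs with $2/\tilde q+3/\tilde r=2$). Second, and fatally, if one runs the duality correctly --- writing $\int_{\R}e^{i(t-t')\Delta}f(t')\,dt'=e^{it\Delta}F$ with $F=\int_{\R}e^{-it'\Delta}f(t')\,dt'$ and estimating $\|F\|_{\dot{H}^{1/2}}$ by pairing against $g\in L^2$ --- one is led to need the homogeneous estimate $\|e^{it\Delta}h\|_{L^{\tilde q}_tL^{\tilde r}_x}\lesssim\|h\|_{\dot{H}^{-1/2}}$ for $\dot{H}^{-1/2}$-admissible pairs, and this is \emph{false}: the Galilean modulation $h\mapsto e^{ix\cdot\xi_0}h$ leaves the left-hand side invariant while driving $\|e^{ix\cdot\xi_0}h\|_{\dot{H}^{-1/2}}\to 0$ as $|\xi_0|\to\infty$. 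The left pair ($\dot H^{1/2}$-admissible) and right pair ($\dot H^{-1/2}$-admissible) are not dual admissible pairs in the Keel--Tao sense; the estimate is one of the Kato/Foschi ``non-admissible'' inhomogeneous Strichartz inequalities, whose proof requires a genuinely bilinear argument on $\iint\langle e^{i(t-s)\Delta}f(s),g(t)\rangle\,ds\,dt$ (dispersive kernel bound, Hardy--Littlewood--Sobolev, interpolation), after which Christ--Kiselev does apply to produce the retarded operator since $q\geq 4^+>\tilde q'$. This is precisely why the paper cites \cite{hr} in addition to \cite{caz}: the needed statement is the Kato-type estimate recorded there, not a consequence of Keel--Tao plus duality.
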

\begin{proof}
See for instance \cite{caz} and \cite{hr}.
\end{proof}

The following decay estimate of the linear flow associated with the Schr\"odinger equation will also be useful in the sequel.
\begin{lemma}\label{Decay}
If $t\neq 0$, $1/p+1/p'=1$, and $p'\in [1,2]$ then
$$
e^{it\Delta}: L^{p'}(\R^3)\rightarrow L^{p}(\R^3)
$$
is continuous and we have the estimate
$$
\|e^{it\Delta}f\|_{L^p}\leq c |t|^{-\frac{3}{2}(1/p'-1/p)}\|f\|_{L^{p'}}.
$$
\end{lemma}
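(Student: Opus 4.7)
The plan is to prove this standard dispersive estimate by combining the explicit kernel representation of the free Schr\"odinger evolution with Plancherel's theorem and Riesz--Thorin interpolation, reducing the whole statement to two endpoint cases.

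First I would handle $p'=2$ (and hence $p=2$). Since $\widehat{e^{it\Delta}f}(\xi)=e^{-it|\xi|^2}\widehat{f}(\xi)$, Plancherel's identity gives $\|e^{it\Delta}f\|_{L^2}=\|f\|_{L^2}$, which matches the claimed bound because $\frac{3}{2}(1/p'-1/p)=0$ in this case. Next I would handle $p'=1$ (hence $p=\infty$) via the explicit oscillatory-integral representation of the Schr\"odinger group on $\R^3$:
$$
e^{it\Delta}f(x)=\frac{1}{(4\pi i t)^{3/2}}\int_{\R^3}e^{i|x-y|^2/(4t)}f(y)\,dy,
$$
valid for $t\neq 0$ (first for $f\in\s(\R^3)$ and then extended by density). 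Taking absolute values inside the integral yields
$$
\|e^{it\Delta}f\|_{L^\infty}\leq \frac{1}{(4\pi|t|)^{3/2}}\|f\|_{L^1},
$$
which is exactly the desired inequality at the second endpoint, where $\frac{3}{2}(1/p'-1/p)=3/2$.

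Once both endpoints are in hand, I would invoke the Riesz--Thorin interpolation theorem applied to the linear operator $T_t:=e^{it\Delta}$, viewed as bounded $L^1\to L^\infty$ with norm $\lesssim |t|^{-3/2}$ and $L^2\to L^2$ with norm $1$. Writing $\frac{1}{p'}=(1-\theta)\cdot 1+\theta\cdot\tfrac12$ and $\frac{1}{p}=(1-\theta)\cdot 0+\theta\cdot\tfrac12$ for $\theta\in[0,1]$, so that $p'\in[1,2]$ and $1/p+1/p'=1$ automatically, interpolation gives
$$
\|e^{it\Delta}f\|_{L^p}\leq c\,|t|^{-\frac{3}{2}(1-\theta)}\|f\|_{L^{p'}}=c\,|t|^{-\frac{3}{2}(1/p'-1/p)}\|f\|_{L^{p'}},
$$
as required. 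Continuity of $e^{it\Delta}:L^{p'}(\R^3)\to L^{p}(\R^3)$ is then immediate from this bound and linearity.

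There is no real obstacle here: the only technical point that needs care is justifying the kernel formula for general $f\in L^1$, which is standard (one approximates $f$ by Schwartz functions, uses the formula pointwise in that class, and passes to the limit using the uniform $L^\infty$ bound just derived). I would expect to cite a textbook reference (for instance \cite{caz}) rather than reproduce these details.
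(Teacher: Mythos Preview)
Your proof is correct and is exactly the standard argument for this dispersive estimate. The paper itself does not give a proof but simply cites \cite[Lemma 4.1]{LP15}; the argument you outline (the $L^2\to L^2$ endpoint via Plancherel, the $L^1\to L^\infty$ endpoint via the explicit fundamental solution, followed by Riesz--Thorin interpolation) is precisely what one finds in that reference, so there is no meaningful difference in approach.
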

\begin{proof}
See \cite[Lemma 4.1]{LP15}.
\end{proof}

\section{Energy Inequalities and the Cauchy Problem}\label{sec3}

In this section we will provide the basic results concerning the Cauchy problem \eqref{3nls} we need. To begin with, we will establish some useful energy inequalities. First of all, recall the sharp Gagliardo-Nirenberg inequality.

\begin{lemma} \label{gnlemma}
For any $(u,v)\in H^1\times H^1$ there holds
\begin{equation}\label{GN}
\begin{split}
\|u\|^4_{L^4}+2\beta\|uv\|^2_{L^2}+\|v\|^4_{L^4}&\leq K_{GN} (\|u\|^2_{L^2}+\|v\|^2_{L^2})^{1/2}(\|\nabla u\|^2_{L^2}+\|\nabla v\|^2_{L^2})^{3/2}\\
&\equiv  K_{GN}M(u,v)^{1/2}A(u,v)^{3/2},
\end{split}
\end{equation}
where the sharp constant $K_{GN}$ is given by
\begin{equation}\label{sharpc}
K_{GN}=\frac{4}{3M(P,Q)^{1/2}A(P,Q)^{1/2}}
\end{equation}
and $(P,Q)$ is any ground state solution of the elliptic system \eqref{ellip1}.
\end{lemma}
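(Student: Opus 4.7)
My approach is the Weinstein variational characterization of the sharp constant, adapted to the coupled system. Set
$$\mathcal{J}(u,v):=\frac{\|u\|^4_{L^4}+2\beta\|uv\|^2_{L^2}+\|v\|^4_{L^4}}{M(u,v)^{1/2}A(u,v)^{3/2}},\qquad (u,v)\neq(0,0),$$
and define $K_{GN}:=\sup\mathcal{J}$. With this definition the inequality \eqref{GN} is tautological, so the content of the lemma is the identification of $K_{GN}$ with the value in \eqref{sharpc}. Observe that $\mathcal{J}$ is invariant under the two-parameter dilation $(u,v)\mapsto(\alpha u(\gamma\cdot),\alpha v(\gamma\cdot))$ with $\alpha,\gamma>0$, so along a maximizing sequence $(u_n,v_n)$ I may normalize $M(u_n,v_n)=A(u_n,v_n)=1$.

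The next step is to produce a maximizer. Replacing $(u_n,v_n)$ by the Schwarz rearrangements $(u_n^{\ast},v_n^{\ast})$ preserves the $L^4$ norms and $M$; increases $\|u_n v_n\|^2_{L^2}$ via the Hardy--Littlewood inequality applied to $u_n^2,v_n^2$; and does not increase $A$ by Pólya--Szegő. Hence I may assume the sequence is radial and nonnegative. The compact Strauss embedding $H^1_{\mathrm{rad}}(\R^3)\hookrightarrow L^4(\R^3)$ then yields a subsequential strong $L^4$ limit $(U,V)\neq(0,0)$, and weak $H^1$ lower semicontinuity of $A$ combined with $\mathcal{J}(U,V)\leq K_{GN}$ forces $M(U,V)=A(U,V)=1$ and $\mathcal{J}(U,V)=K_{GN}$. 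A direct variation in each slot shows that $(U,V)$ satisfies
$$-a\Delta U+bU=(|U|^2+\beta|V|^2)U,\qquad -a\Delta V+bV=(|V|^2+\beta|U|^2)V$$
for Lagrange multipliers $a,b>0$, and a further two-parameter rescaling absorbs $a$ and $b$ and turns the maximizer into a solution of \eqref{ellip1}.

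To compute $K_{GN}=\mathcal{J}(P,Q)$ I rely on two Pohozaev identities for \eqref{ellip1}. Pairing with $\overline P$ and $\overline Q$, summing, and integrating yields
$$A(P,Q)+M(P,Q)=\|P\|^4_{L^4}+2\beta\|PQ\|^2_{L^2}+\|Q\|^4_{L^4}.$$
Pairing with $x\cdot\nabla\overline P$ and $x\cdot\nabla\overline Q$, integrating by parts on $\R^3$, and telescoping the coupling term via $\nabla(|P|^2|Q|^2)=|Q|^2\nabla|P|^2+|P|^2\nabla|Q|^2$ gives
$$-\tfrac12 A(P,Q)-\tfrac32 M(P,Q)+\tfrac34\bigl(\|P\|^4_{L^4}+2\beta\|PQ\|^2_{L^2}+\|Q\|^4_{L^4}\bigr)=0.$$
Solving the pair forces $A(P,Q)=3M(P,Q)$ and $\|P\|^4_{L^4}+2\beta\|PQ\|^2_{L^2}+\|Q\|^4_{L^4}=\tfrac43 A(P,Q)$; substitution into $\mathcal{J}(P,Q)$ produces exactly \eqref{sharpc}. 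Because these identities hold on every nontrivial solution of \eqref{ellip1}, $\mathcal{J}$ reduces to $4/(3M^{1/2}A^{1/2})$ there, so maximizing it selects the solution of minimal $M^{1/2}A^{1/2}$, equivalently of minimal energy (since $E=A/6$ on solutions), which is precisely the ground state. The value of $K_{GN}$ is therefore unambiguous despite the open question of uniqueness mentioned after \eqref{ellip1}.

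The main obstacle is the existence of the maximizer: translation and dilation invariances preclude a direct weak-compactness argument, and the coupling term $\|uv\|^2_{L^2}$ demands the joint Hardy--Littlewood estimate rather than independent rearrangement of $u$ and $v$. Once the radial reduction and Strauss compactness are in place, the identification of the maximizer with a rescaling of $(P,Q)$ and the final computation of $K_{GN}$ are algebraic consequences of the two Pohozaev identities.
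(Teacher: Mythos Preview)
Your proof is correct and follows the Weinstein variational method that the paper outsources to its references \cite{fm} and \cite{P2015}; since the paper's own proof is just a citation, your argument is a faithful and complete expansion of the same approach.
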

\begin{proof}
See \cite[Section 3]{fm} and \cite[Proposition 2.3]{P2015}.
\end{proof}

\begin{remark}\label{remgn}
It can be shown that the sharp constant in \eqref{sharpc} is given by 
\begin{equation}\label{sharpc1}
K_{GN}=\dfrac{4}{3\sqrt3 m},
\end{equation}
where $m$ is the infimum, on the Nehari manifold, of the Lagrangian associated with \eqref{ellip1}. In particular, this shows that $K_{GN}$ does not depend on the choice of the ground state solution. Also, using a Pohozaev-type identity, it is not difficult to see that (see e.g., \cite{P2015})
\begin{equation}\label{sharpc2}
K_{GN}=\frac{4}{3\sqrt6 M(P,Q)^{1/2}E(P,Q)^{1/2}}.
\end{equation}
All together, relations \eqref{sharpc}-\eqref{sharpc2}, imply that the quantities $M(P,Q)E(P,Q)$ and also $M(P,Q)A(P,Q)$, appearing on the right-hand side of \eqref{EM} and \eqref{EM1}, do not depend on the choice of the ground state solution.
\end{remark}

\begin{lemma}\label{grolemma}
Assume that $(u,v)\in H^1\times H^1$ satisfies $M(u,v)A(u,v)\leq M(P,Q)A(P,Q)$. Then, the following statements  hold.
\begin{itemize}
\item[(i)] ${\displaystyle E(u,v)\geq \frac{1}{6}A(u,v).  }$
\item[(ii)]  ${\displaystyle M(u,v)A(u,v)\leq \frac{M(u,v)E(u,v)}{M(P,Q)E(P,Q)}M(P,Q)A(P,Q).  }$
\item[(iii)]  ${\displaystyle S(u,v)\geq 8A(u,v)\left( 1-\left(\frac{M(u,v)E(u,v)}{M(P,Q)E(P,Q)}\right)^{1/2} \right) }$, where
\begin{equation}\label{Ruv}
S(u,v)=8A(u,v)-6(\|u\|^4_{L^4}+2\beta\|uv\|^2_{L^2}+\|v\|^4_{L^4}).
\end{equation}
\end{itemize}
\end{lemma}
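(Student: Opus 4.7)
The plan is to derive all three inequalities as algebraic consequences of the sharp Gagliardo--Nirenberg inequality in Lemma \ref{gnlemma}, together with the hypothesis $M(u,v)A(u,v)\leq M(P,Q)A(P,Q)$ and the key identity $E(P,Q)=\tfrac16 A(P,Q)$. I would first record this identity: since $(P,Q)$ saturates \eqref{GN}, one has
$$\|P\|^4_{L^4}+2\beta\|PQ\|^2_{L^2}+\|Q\|^4_{L^4}=K_{GN}M(P,Q)^{1/2}A(P,Q)^{3/2}=\tfrac43 A(P,Q),$$
where I used the explicit value of $K_{GN}$ from \eqref{sharpc}. Substituting into the energy formula \eqref{econ} gives $E(P,Q)=\tfrac12 A(P,Q)-\tfrac14\cdot\tfrac43 A(P,Q)=\tfrac16 A(P,Q)$, an observation that makes (i) and (ii) essentially equivalent.

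For (i), I would write $A:=A(u,v)$, $M:=M(u,v)$, and similarly $A_0, M_0$ for $(P,Q)$. Applying \eqref{GN} inside \eqref{econ},
$$E(u,v)\geq \tfrac12 A-\tfrac14 K_{GN}M^{1/2}A^{3/2}=\tfrac12 A-\tfrac13\Bigl(\tfrac{MA}{M_0 A_0}\Bigr)^{1/2}A,$$
again by the value of $K_{GN}$. The hypothesis $MA\leq M_0A_0$ forces the parenthesis to be at most $1$, hence $E(u,v)\geq (\tfrac12-\tfrac13)A=\tfrac16 A(u,v)$.

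For (ii), I would divide by $M(u,v)$ to see that the claim is equivalent to $A(u,v)\leq E(u,v)\cdot A(P,Q)/E(P,Q)$; by the identity $E(P,Q)=\tfrac16 A(P,Q)$ this reads $A(u,v)\leq 6E(u,v)$, which is exactly (i). Finally, for (iii), applying \eqref{GN} directly in the definition \eqref{Ruv} yields
$$S(u,v)\geq 8A-6K_{GN}M^{1/2}A^{3/2}=8A\Bigl(1-\Bigl(\tfrac{MA}{M_0 A_0}\Bigr)^{1/2}\Bigr),$$
so I only need to upgrade the ratio $MA/(M_0A_0)$ to $ME/(M_0E_0)$. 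That upgrade is precisely the content of (ii) (equivalently, of $E(u,v)\geq \tfrac16 A(u,v)$ combined with $E(P,Q)=\tfrac16 A(P,Q)$), which gives $A/A_0\leq E(u,v)/E(P,Q)$ and therefore $MA/(M_0A_0)\leq ME(u,v)/(M_0E(P,Q))$, completing the proof of (iii).

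There is no real obstacle here: everything is a bookkeeping exercise once the sharp constant from Lemma \ref{gnlemma} is plugged in correctly. The only subtlety worth flagging is that the derivation is independent of the choice of ground state, which is already guaranteed by Remark \ref{remgn}, so no further uniqueness discussion is needed.
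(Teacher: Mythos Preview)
Your proof is correct and follows essentially the same approach as the paper: both derive all three parts from the sharp Gagliardo--Nirenberg inequality together with the explicit value \eqref{sharpc} of $K_{GN}$ and the identity $E(P,Q)=\tfrac16 A(P,Q)$. The only cosmetic difference is that the paper packages the computation into an auxiliary function $f(x)=\tfrac12 x^2-\tfrac{K_{GN}}{4}x^3$ and reads off (i)--(ii) from the fact that $f(x)\ge \tfrac16 x^2$ on $[0,x_0]$ with $x_0=M(P,Q)^{1/2}A(P,Q)^{1/2}$, whereas you substitute $K_{GN}$ immediately and work with the explicit ratios; the content is identical.
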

\begin{proof}
From Lemma \ref{gnlemma}, 
$$
E(u,v)\geq \frac{1}{2}A(u,v)-\frac{1}{4}K_{GN}M(u,v)^{1/2}A(u,v)^{3/2}.
$$
Multiplying this last inequality by $M(u,v)$ yields
\begin{equation}\label{en0}
\begin{split}
M(u,v)E(u,v)&\geq \frac{1}{2}\left(M(u,v)^{1/2}A(u,v)^{1/2} \right)^2-\frac{1}{4}K_{GN}\left(M(u,v)^{1/2}A(u,v)^{1/2} \right)^3\\
&\equiv f\left(M(u,v)^{1/2}A(u,v)^{1/2}\right),
\end{split}
\end{equation}
where by definition $f(x)=\frac{x^2}{2}-\frac{K_{GN}}{4}x^3$. In particular, since equality holds in \eqref{GN} for $(u,v)=(P,Q)$, we get
\begin{equation}\label{en1}
M(P,Q)E(P,Q)=f\left(M(P,Q)^{1/2}A(P,Q)^{1/2}\right).
\end{equation}
For $x>0$, function $f$ has a unique critical point at
$$
x_0=\frac{4}{3K_{GN}}=M(P,Q)^{1/2}A(P,Q)^{1/2}.
$$
It is easily seen that, on $(0,x_0)$,
\begin{equation}\label{en2}
f(x)\geq \frac{1}{6}x^2
\end{equation}
with equality for $x=x_0$, that is,
\begin{equation}\label{en3}
f\left(M(P,Q)^{1/2}A(P,Q)^{1/2}\right)= \frac{1}{6}M(P,Q)A(P,Q)
\end{equation}

In addition, $f$ is increasing on $(0,x_0)$ and decreasing on $(x_0,+\infty)$. Thus, since the hypothesis implies $M(u,v)^{1/2}A(u,v)^{1/2}\leq x_0$, we obtain
$$
M(u,v)E(u,v)\geq f\left(M(u,v)^{1/2}A(u,v)^{1/2}\right)\geq \frac{1}{6}\left( M(u,v)^{1/2}A(u,v)^{1/2} \right)^2.
$$
This proves part (i) of the lemma. From \eqref{en3} and \eqref{en1}, we have
$$
M(P,Q)E(P,Q)=\frac{1}{6}M(P,Q)A(P,Q),
$$
which together with \eqref{en0} and \eqref{en2} imply part (ii). Finally, we note from Lemma \ref{gnlemma} and the definition  of $x_0$ that
\[
\begin{split}
\frac{1}{8}M(u,v)S(u,v)&\geq M(u,v)A(u,v)-\frac{3}{4}K_{GN}M(u,v)^{3/2}A(u,v)^{3/2}\\
&=M(u,v)A(u,v)\left( 1-\frac{1}{x_0} M(u,v)^{1/2}A(u,v)^{1/2}  \right)\\
&=M(u,v)A(u,v)\left( 1-\frac{M(u,v)^{1/2}A(u,v)^{1/2}}{M(P,Q)^{1/2}A(P,Q)^{1/2}} \right).
\end{split}
\]
This establishes part (iii) and completes the proof of Lemma \ref{grolemma}.
\end{proof}

In view of the above lemma and the global result stated in Theorem \ref{globalPastor} we have the following small data global theory in $H^1\times H^1$.

\begin{corollary}[Small data global theory in $H^1\times H^1$] \label{teocri2} There exists $\delta_{sd}>0$ such
that if $(u_0,v_0)\in H^1\times H^1$ satisfies
$$
\|(u_0,v_0)\|_{H^1\times H^1} \leq \delta_{sd}
$$ 
then the solution $(u,v)$ of the initial value problem \eqref{3nls} is globally defined in $H^1\times H^1$. In addition there exists $C_{sd}>0$ such that
\begin{equation}\label{uvH1}
\|(u,v)\|_{L_t^{\infty}(H^1\times H^1)}\leq C_{sd}\delta_{sd}.
\end{equation}
\end{corollary}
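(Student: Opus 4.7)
The plan is to choose $\delta_{sd}$ small enough that the two structural hypotheses of Theorem \ref{globalPastor} are automatically forced, and then to combine the propagated bound \eqref{EM2} with Lemma \ref{grolemma}(i) and the conservation of mass and energy to extract the uniform $H^1\times H^1$ estimate.

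First I would verify that, for $\delta_{sd}$ sufficiently small, every initial datum with $\|(u_0,v_0)\|_{H^1\times H^1}\leq \delta_{sd}$ lies in $\mathcal{K}$. Directly from the definitions \eqref{masscon} and \eqref{acon}, $M(u_0,v_0)\leq \delta_{sd}^2$ and $A(u_0,v_0)\leq \delta_{sd}^2$, while the definition \eqref{econ} together with the positivity of the nonlinear term yields $E(u_0,v_0)\leq \tfrac12 A(u_0,v_0)\leq \tfrac12 \delta_{sd}^2$. Consequently both products are quartic in $\delta_{sd}$:
$$M(u_0,v_0)A(u_0,v_0)\leq \delta_{sd}^4,\qquad M(u_0,v_0)E(u_0,v_0)\leq \tfrac12 \delta_{sd}^4.$$
Since $M(P,Q)A(P,Q)$ and $M(P,Q)E(P,Q)$ are fixed positive constants, requiring
$$\delta_{sd}^4<\min\bigl\{M(P,Q)A(P,Q),\,2M(P,Q)E(P,Q)\bigr\}$$
makes \eqref{EM} and \eqref{EM1} hold, so $(u_0,v_0)\in\mathcal{K}$.

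Next I would invoke Theorem \ref{globalPastor}: the corresponding solution exists globally, and the propagated bound \eqref{EM2} holds for all $t\in\R$. This is precisely the static hypothesis of Lemma \ref{grolemma}(i) applied to the pair $(u(t),v(t))$, so I obtain
$$A(u(t),v(t))\leq 6\,E(u(t),v(t))=6\,E(u_0,v_0)\leq 3\delta_{sd}^2,$$
using energy conservation in the middle equality and the initial bound $E(u_0,v_0)\leq \tfrac12\delta_{sd}^2$ at the end. Combining this with mass conservation $M(u(t),v(t))=M(u_0,v_0)\leq\delta_{sd}^2$ gives
$$\|(u(t),v(t))\|_{H^1\times H^1}^2=M(u(t),v(t))+A(u(t),v(t))\leq 4\delta_{sd}^2$$
uniformly in $t$, which is \eqref{uvH1} with $C_{sd}=2$.

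I do not anticipate a genuine obstacle: this is essentially a bookkeeping argument that leverages two ingredients already in place. The only point that deserves care is the simultaneous enforcement of \eqref{EM} and \eqref{EM1} at $t=0$; once $\delta_{sd}$ is chosen so that $\delta_{sd}^4$ lies below the two fixed ground-state thresholds, Theorem \ref{globalPastor} hands us globality plus the invariant control \eqref{EM2}, and Lemma \ref{grolemma}(i) then converts energy (which is conserved and initially small) into a pointwise-in-time bound on $A(u(t),v(t))$.
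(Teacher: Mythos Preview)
Your proof is correct and follows essentially the same approach as the paper: verify that small $H^1\times H^1$ data forces \eqref{EM} and \eqref{EM1}, invoke Theorem \ref{globalPastor} for globality and the propagated bound \eqref{EM2}, then convert this into a uniform $H^1$ bound via Lemma \ref{grolemma} and conservation laws. The only cosmetic difference is that you use part (i) of Lemma \ref{grolemma} (bounding $A$ directly by the conserved energy), whereas the paper uses part (ii) (bounding the product $MA$ and then dividing out the conserved mass); your route is slightly more direct and even yields the explicit constant $C_{sd}=2$.
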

\begin{proof}
For simplicity in this proof we use the equivalent norm in $H^1$ given by
$$
\|u_0\|^2_{H^1}=\|u_0\|^2_{L^2}+\|\nabla u_0\|^2_{L^2}.
$$
Therefore,
\begin{equation}\label{MAuv}
\|(u_0,v_0)\|^2_{H^1\times H^1}=M(u_0,v_0)+A(u_0,v_0).
\end{equation}
Furthermore, it is easy to see that 
$$
M(u_0,v_0)A(u_0,v_0)\leq \|(u_0,v_0)\|^2_{H^1\times H^1}
$$
and
$$
M(u_0,v_0)E(u_0,v_0)\leq \frac12M(u_0,v_0)A(u_0,v_0).
$$
So, there exists $\delta_{sd}>0$ sufficiently small such that if $\|(u_0,v_0)\|_{H^1\times H^1} \leq \delta_{sd}$ then the relations \eqref{EM} and \eqref{EM1} are satisfied. In view of Theorem \ref{globalPastor}, the solution $(u,v)$ of the initial value problem \eqref{3nls} is global and, since $M(u,v)$ and $E(u,v)$ are conserved quantities,
$$
M(u(t),v(t))E(u(t),v(t))\leq \frac12\delta_{sd}^2
$$
and
$$
M(u(t),v(t))A(u(t),v(t))<M(P,Q)A(P,Q).
$$
Applying Lemma \ref{grolemma} (ii) there exists a constant $C_{sd}>0$ such that
\begin{equation}\label{MAuvt}
M(u(t),v(t))A(u(t),v(t))<C^2_{sd}\delta_{sd}^2.
\end{equation}

Finally, since the mass $M(u(t),v(t))$ is preserved, we deduce (possibly changing the constant $C_{sd}$) \eqref{uvH1} from \eqref{MAuv} and \eqref{MAuvt}. This completes the proof of Corollary \ref{teocri2}. 
\end{proof}

Next we recall the small data global theory in the critical Sobolev space $\Hmeio\times \Hmeio$, the $H^1$-scattering criterion and the existence of wave operators obtained by the second author in \cite{P2015}.

\begin{theorem}[Small data global theory in $\Hmeio\times \Hmeio$] \label{teocri} Let $A>0$ and assume that
$(u_0,v_0)\in \Hmeio\times \Hmeio$ satisfies
$\|(u_0,v_0)\|_{\Hmeio\times \Hmeio}\leq A$. There is $\delta>0$ such
that if 
$$
\|(e^{it\Delta}u_0, e^{it\Delta}v_0)\|_{S(\Hmeio)\times S(\Hmeio)}\leq \delta,
$$ 
then the
initial value problem \eqref{3nls} is globally well-posed in
$\Hmeio\times \Hmeio$. In addition there exists $c>0$ such that
$$
\|(u,v)\|_{S(\Hmeio)\times S(\Hmeio)}\leq
2\|(e^{it\Delta}u_0, e^{it\Delta}v_0)\|_{S(\Hmeio)\times S(\Hmeio)}
$$
and
$$
\|(D^{1/2}u, D^{1/2}v)\|_{S(L^2)\times S(L^2)}\leq
2c\|(u_0,v_0)\|_{\Hmeio\times \Hmeio}.
$$
\end{theorem}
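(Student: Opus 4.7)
\medskip

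The plan is to establish Theorem \ref{teocri} by a standard fixed-point argument applied to the Duhamel formulation of \eqref{3nls}. Define the map
\[
\Phi(u,v) = (\Phi_1(u,v), \Phi_2(u,v)),
\]
where
\[
\Phi_1(u,v)(t) = e^{it\Delta}u_0 - i\int_0^t e^{i(t-t')\Delta}(|u|^2+\beta|v|^2)u(t')\,dt',
\]
and $\Phi_2$ is defined analogously with the roles of $u$ and $v$ exchanged. I will look for a fixed point in the complete metric space
\[
X = \{(u,v) : \|(u,v)\|_{S(\Hmeio)\times S(\Hmeio)} \leq 2\delta,\; \|(D^{1/2}u, D^{1/2}v)\|_{S(L^2)\times S(L^2)} \leq 2cA\}
\]
equipped with the metric induced by the $S(\Hmeio)\times S(\Hmeio)$ norm.

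Next I would use the Strichartz estimates of Lemma \ref{strilem} on each piece. The linear part is handled directly: the hypothesis gives $\|e^{it\Delta}u_0\|_{S(\Hmeio)} \le \delta$, while $\|D^{1/2}e^{it\Delta}u_0\|_{S(L^2)} \lesssim \|u_0\|_{\Hmeio}$, and similarly for $v_0$. For the nonlinear Duhamel term, I would apply the third inhomogeneous Strichartz inequality of Lemma \ref{strilem}, reducing matters to bounding the cubic terms $|u|^2 u$, $|v|^2 u$, $|u|^2 v$, $|v|^2 v$ in $S'(\dot{H}^{-1/2})$. Using H\"older's inequality with a suitably chosen $\dot{H}^{-1/2}$-admissible pair (so that the three factors land in exponents compatible with $S(\Hmeio)$-admissible pairs), I expect to obtain a bound of the type
\[
\|(|u|^2+\beta|v|^2)u\|_{S'(\dot{H}^{-1/2})} \lesssim \big(\|u\|^2_{S(\Hmeio)} + \|v\|^2_{S(\Hmeio)}\big)\|u\|_{S(\Hmeio)}.
\]
For the $S(L^2)$ bound on $(D^{1/2}u, D^{1/2}v)$ I would use the second inhomogeneous estimate together with the fractional Leibniz (product) rule, which distributes $D^{1/2}$ over the cubic nonlinearity and leaves two undifferentiated factors bounded in $S(\Hmeio)$-type norms and one factor carrying the $D^{1/2}$.

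Combining these bounds yields
\[
\|\Phi(u,v)\|_{S(\Hmeio)\times S(\Hmeio)} \leq \delta + C(2\delta)^3, \qquad \|(D^{1/2}\Phi_1, D^{1/2}\Phi_2)\|_{S(L^2)\times S(L^2)} \leq cA + C(2\delta)^2 (2cA),
\]
and a similar estimate for differences $\Phi(u,v)-\Phi(\tilde{u},\tilde{v})$ by polarization of the cubic expressions. Choosing $\delta = \delta(A) > 0$ small enough that $C(2\delta)^2 < 1/2$ (and adjusting constants) makes $\Phi$ a contraction on $X$, producing a unique fixed point. This fixed point is the desired global solution in $\Hmeio\times \Hmeio$, satisfying the stated bounds. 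I expect the principal technical obstacle to be the bookkeeping of admissible pairs: one must pick an $\dot{H}^{-1/2}$-admissible pair $(q,r)$ so that, by H\"older in both space and time, the triple product lies in the correct dual Strichartz space, and simultaneously so that the resulting exponents fall within the ranges defining $S(\Hmeio)$ and $S(L^2)$; verifying these Sobolev embedding/interpolation inequalities is where the computation is most delicate.
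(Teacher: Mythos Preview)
Your proposal is correct and follows the standard contraction-mapping approach; the paper itself does not give a proof but simply cites \cite[Theorem 4.1]{P2015}, whose argument is precisely the Strichartz-based fixed point you outline. One small point: when equipping $X$ with only the $S(\Hmeio)\times S(\Hmeio)$ metric, you should remark why the ball remains complete (e.g., by lower semicontinuity of the stronger $D^{1/2}$-norm under the weaker convergence), but this is routine.
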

\begin{proof}
See \cite[Theorem 4.1]{P2015}.
\end{proof}

The next result gives a sufficient condition for a uniformly bounded global  solution in $H^1\times H^1$ scatters.

\begin{theorem}[$H^1$ scattering]\label{h1scat}
Let $(u_0,v_0)\in H^1\times H^1$. Suppose that the initial value problem \eqref{3nls} is globally well-posed in $H^1\times H^1$ with 
$B:=\displaystyle{\sup_{t\in \R}}\|(u(t),v(t))\|_{H^1\times H^1}<\infty$ and $K:=\|u\|_{L^5_{x,t}}+\|v\|_{L^5_{x,t}}<\infty$. Then the solution $(u(t), v(t))$ scatters forward and backward in time in the sense of Definition \ref{scatdef}.
\end{theorem}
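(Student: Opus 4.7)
The plan is to derive global space-time Strichartz estimates for $(u,v)$ from the hypotheses $B<\infty$ and $K<\infty$, and then use Duhamel's formula together with the standard scattering criterion to identify the asymptotic states $(\phi^+,\psi^+)$ in $H^1\times H^1$.

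First I would fix the $L^2$-admissible pair $(q,r)=(10/3,10/3)$ (so $2/q+3/r=3/2$) whose dual is $(10/7,10/7)$, and I would recall the interpolation identity
\begin{equation*}
\|u\|_{L^{10/3}_{x,t}}\le \|u\|_{L^5_{x,t}}^{\theta}\|u\|_{L^{\infty}_tL^{30/7}_x}^{1-\theta}
\end{equation*}
for a suitable $\theta\in(0,1)$, so that the $L^5_{x,t}$ control together with $B<\infty$ (via Sobolev embedding $H^1\hookrightarrow L^{30/7}$) gives $u,v\in L^{10/3}_{x,t}(\R)$. This sets us up to cut $[0,\infty)$ into a \emph{finite} collection of intervals $I_1,\dots,I_N$ (with $N$ depending only on $K$ and $\eta$) on which $\|u\|_{L^5_{x,t}(I_j)}+\|v\|_{L^5_{x,t}(I_j)}<\eta$ for some $\eta>0$ to be chosen small below.

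Next I would perform the main nonlinear bookkeeping. Writing the nonlinearity as $N_1(u,v)=(|u|^2+\beta|v|^2)u$ and $N_2(u,v)=(|v|^2+\beta|u|^2)v$, Hölder in space-time gives
\begin{equation*}
\|N_j(u,v)\|_{L^{10/7}_{x,t}(I_j)}\lesssim \bigl(\|u\|_{L^5_{x,t}(I_j)}^2+\|v\|_{L^5_{x,t}(I_j)}^2\bigr)\bigl(\|u\|_{L^{10/3}_{x,t}(I_j)}+\|v\|_{L^{10/3}_{x,t}(I_j)}\bigr),
\end{equation*}
and the analogous inequality holds with $\nabla u,\nabla v$ in place of $u,v$ on the right (after distributing $\nabla$ onto each factor and absorbing via Hölder). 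Applying the Duhamel representation on $I_j=[t_{j-1},t_j]$ and the Strichartz estimates from Lemma~\ref{strilem},
\begin{equation*}
\|(u,v)\|_{S(L^2)(I_j)\times S(L^2)(I_j)}+\|(\nabla u,\nabla v)\|_{S(L^2)(I_j)\times S(L^2)(I_j)}
\end{equation*}
is bounded by $C\|(u(t_{j-1}),v(t_{j-1}))\|_{H^1\times H^1}+C\eta^2\bigl(\|(\nabla u,\nabla v)\|_{L^{10/3}_{I_j}L^{10/3}_x}+\|(u,v)\|_{L^{10/3}_{I_j}L^{10/3}_x}\bigr)$. Choosing $\eta$ small so that $C\eta^2<1/2$, the $L^{10/3}$ terms on the right are absorbed, yielding a bound on $I_j$ in terms of $B$ alone. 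Iterating through $j=1,\dots,N$ and using that $B=\sup_t\|(u(t),v(t))\|_{H^1\times H^1}$, I get a global Strichartz bound $\|(u,v)\|_{S(L^2)\times S(L^2)}+\|(\nabla u,\nabla v)\|_{S(L^2)\times S(L^2)}\le C(B,K)<\infty$.

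Finally, with this global Strichartz control in hand, scattering is routine. I define
\begin{equation*}
\phi^+:=u_0+i\int_0^{\infty}e^{-is\Delta}N_1(u,v)(s)\,ds,\qquad \psi^+:=v_0+i\int_0^{\infty}e^{-is\Delta}N_2(u,v)(s)\,ds,
\end{equation*}
and verify via the dual Strichartz estimate applied to the tail interval $[t,\infty)$ that
\begin{equation*}
\|(u(t),v(t))-(e^{it\Delta}\phi^+,e^{it\Delta}\psi^+)\|_{H^1\times H^1}\lesssim \sum_{k=0}^1\bigl\|D^k N_j(u,v)\bigr\|_{L^{10/7}_{[t,\infty)}L^{10/7}_x},
\end{equation*}
which tends to $0$ as $t\to+\infty$ by dominated convergence, since the right-hand side is the tail of a finite integral by the global Strichartz bound. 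The same argument on $(-\infty,0]$ produces $(\phi^-,\psi^-)$. The one subtlety (the main obstacle, really) is the nonlinear estimate at the $\dot H^1$ level: one must make sure that after distributing $\nabla$, every factor can be placed in a space compatible with Strichartz and with the smallness coming from $\|u\|_{L^5_{x,t}(I_j)}+\|v\|_{L^5_{x,t}(I_j)}<\eta$; that is why the $L^5_{x,t}$ norm (the critical Strichartz norm for 3D cubic NLS) appears precisely as the right quantity to control in the hypotheses.
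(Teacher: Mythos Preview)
Your overall strategy is the standard one and is essentially correct; the paper itself does not prove this theorem but simply cites \cite[Theorem~1.3]{P2015}, where the same bootstrap argument is carried out. One point does need fixing, though: your opening interpolation claim is false. Since $10/3<5$, the space $L^{10/3}_t$ cannot lie between $L^5_t$ and $L^\infty_t$; solving $\tfrac{3}{10}=\tfrac{\theta}{5}$ forces $\theta=3/2>1$, so no such $\theta\in(0,1)$ exists and the inequality $\|u\|_{L^{10/3}_{x,t}}\le \|u\|_{L^5_{x,t}}^{\theta}\|u\|_{L^{\infty}_tL^{30/7}_x}^{1-\theta}$ is not available. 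Fortunately this step is unnecessary for your argument: you never use a global $L^{10/3}_{x,t}$ bound except through the Strichartz bootstrap itself. The a~priori finiteness of $\|u\|_{L^{10/3}_{I_j}L^{10/3}_x}$ and $\|\nabla u\|_{L^{10/3}_{I_j}L^{10/3}_x}$ needed to justify the absorption follows on each $I_j$ from the local well-posedness theory (iterated on compact subintervals) together with a continuity argument in the right endpoint of $I_j$. Once that is clarified, the H\"older estimate $\||f|^2 g\|_{L^{10/7}_{x,t}(I_j)}\le\|f\|_{L^5_{x,t}(I_j)}^2\|g\|_{L^{10/3}_{x,t}(I_j)}$ (applied with $f\in\{u,v\}$ and $g\in\{u,v,\nabla u,\nabla v\}$), the partition of $[0,\infty)$ into finitely many intervals with $\|u\|_{L^5_{x,t}(I_j)}+\|v\|_{L^5_{x,t}(I_j)}<\eta$, and the Duhamel tail construction of $(\phi^\pm,\psi^\pm)$ all go through exactly as you describe.
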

\begin{proof}
Assuming that $\displaystyle{\sup_{t\in [0,+\infty)}}\|(u(t),v(t))\|_{H^1\times H^1}<\infty$ the forward scattering was proved in \cite[Theorem 1.3]{P2015}. The same argument can be applied to prove backward scattering if we suppose $\displaystyle{\sup_{t\in (-\infty,0]}}\|(u(t),v(t))\|_{H^1\times H^1}<\infty$.
\end{proof}

Note  that $(5,5)$ is an $\dot{H}^{1/2}$ admissible pair. A simple inspection in the proof of Theorem 1.3 in \cite{P2015} reveals that it still holds if we replace $(5,5)$  by any $\dot{H}^{1/2}$ admissible pair. In particular, a sufficient condition for scattering is the finiteness of the quantity $\|(u,v)\|_{S(\Hmeio\times\Hmeio)}$.

\begin{theorem}[Existence of wave operators]\label{waveop}
Assume $\phi^+,\psi^+\in H^1$ and
\begin{equation}\label{phipsi+}
\frac{1}{2}M(\phi^+,\psi^+)A(\phi^+,\psi^+)<M(P,Q)E(P,Q).
\end{equation}
Then, there exists $(u_0,v_0)\in H^1\times H^1$ such that the solution
$(u(t),v(t))$ of \eqref{3nls} with initial condition $(u_0,v_0)$ exists globally and satisfies
\begin{equation*}
\lim_{t\to+\infty}\|(u(t),v(t))-(e^{it\Delta}\phi^+,e^{it\Delta}\psi^+)\|_{H^1\times H^1}=0,
\end{equation*}
In addition,
\begin{equation}\label{waveop1}
M(u_0,v_0)A(u(t),v(t))<M(P,Q)A(P,Q)
\end{equation}
and
\begin{equation}\label{waveop2}
M(u(t),v(t))=M(\phi^+,\psi^+), \quad E(u(t),v(t))=\frac{1}{2}A(\phi^+,\psi^+).
\end{equation}
\end{theorem}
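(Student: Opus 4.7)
The plan is to solve the final-value problem on a half-line $[T,\infty)$ for $T$ sufficiently large, and then extend the resulting solution to all of $\R$ via Theorem~\ref{globalPastor}. Since $(\phi^+,\psi^+)\in H^1\times H^1\subset \Hmeio\times\Hmeio$, Lemma~\ref{strilem}(i) gives $(e^{it\Delta}\phi^+,e^{it\Delta}\psi^+)\in S(\Hmeio)\times S(\Hmeio)$, so we may choose $T$ so large that $\|(e^{it\Delta}\phi^+,e^{it\Delta}\psi^+)\|_{S(\Hmeio;[T,\infty))\times S(\Hmeio;[T,\infty))}$ is as small as desired. A contraction-mapping argument on the Duhamel equations
\begin{equation*}
u(t)=e^{it\Delta}\phi^+ + i\int_t^{\infty}e^{i(t-s)\Delta}(|u|^2+\beta|v|^2)u\,ds,
\end{equation*}
and analogously for $v$, carried out exactly as in the proof of Theorem~\ref{teocri} but with integration from $\infty$ instead of $0$, produces a unique $(u,v)\in C([T,\infty);H^1\times H^1)$ solving \eqref{3nls} on $[T,\infty)$ and satisfying
\begin{equation*}
\lim_{t\to\infty}\|(u(t),v(t))-(e^{it\Delta}\phi^+,e^{it\Delta}\psi^+)\|_{H^1\times H^1}=0.
\end{equation*}
The gradient bounds needed to close the argument in $H^1$ (and not only in $\Hmeio$) are obtained via Lemma~\ref{strilem}(ii) applied to $\nabla$, using estimates of the form $\|\nabla N(u,v)\|_{S'(L^2)}\lesssim \|(u,v)\|_{S(\Hmeio)}^{2}\|(\nabla u,\nabla v)\|_{S(L^2)}$ for the cubic nonlinearity $N$, where the smallness of the first factor absorbs the gradient estimate.

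To extend the solution backward and verify the threshold, we look at time $T$. The $H^1$-convergence above yields $M(u(T),v(T))\to M(\phi^+,\psi^+)$ and $A(u(T),v(T))\to A(\phi^+,\psi^+)$ as $T\to\infty$. The dispersive decay Lemma~\ref{Decay} with $p=4$, $p'=4/3$ (extended to $H^1$-data by density) gives $\|e^{it\Delta}\phi^+\|_{L^4}+\|e^{it\Delta}\psi^+\|_{L^4}\to 0$, so combined with the convergence above,
\begin{equation*}
\|u(T)\|_{L^4}^4+2\beta\|u(T)v(T)\|_{L^2}^2+\|v(T)\|_{L^4}^4\longrightarrow 0,\qquad E(u(T),v(T))\longrightarrow \tfrac{1}{2}A(\phi^+,\psi^+).
\end{equation*}
The identity $M(P,Q)E(P,Q)=\tfrac{1}{6}M(P,Q)A(P,Q)$ established inside the proof of Lemma~\ref{grolemma} rewrites hypothesis~\eqref{phipsi+} as $M(\phi^+,\psi^+)A(\phi^+,\psi^+)<\tfrac{1}{3}M(P,Q)A(P,Q)$. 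Therefore, for $T$ large,
\begin{equation*}
M(u(T),v(T))E(u(T),v(T))<M(P,Q)E(P,Q),\qquad M(u(T),v(T))A(u(T),v(T))<M(P,Q)A(P,Q),
\end{equation*}
and Theorem~\ref{globalPastor}, applied with initial time $T$, extends $(u,v)$ to a global solution in $C(\R;H^1\times H^1)$. Setting $(u_0,v_0):=(u(0),v(0))$, the conservation of $M$ and $E$ along this global flow combined with the limits above yields the mass/energy identities in \eqref{waveop2}, while \eqref{waveop1} is precisely \eqref{EM2} of Theorem~\ref{globalPastor}.

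The main obstacle is the first step: closing the contraction for the final-value problem simultaneously at the scaling-critical level $\Hmeio$, where smallness is available only from the tail of $e^{it\Delta}\phi^+$, and at the $H^1$ level, where one ultimately needs the convergence. The device above---propagating a $\nabla$-estimate in $S(L^2)$ alongside the $S(\Hmeio)$ contraction and using the same small parameter to absorb the quadratic factors---handles this in the same way the small-data global theory of Theorem~\ref{teocri} is established in \cite{P2015}. Once the half-line construction is in place, the backward extension is automatic via Theorem~\ref{globalPastor}, and the sought-after identities drop out from continuity of $M$, $A$ and $L^4$-norms together with conservation.
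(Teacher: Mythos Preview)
Your proposal is correct and follows the standard construction of wave operators (solve the final-value problem on $[T,\infty)$ by contraction, identify the conserved quantities from the asymptotic data, then extend backward via Theorem~\ref{globalPastor}). The paper itself does not prove this theorem but simply cites \cite[Theorem~1.4]{P2015}; your sketch is precisely the argument carried out there, so the approaches coincide.

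One small expositional point: once the solution on $[T,\infty)$ has been built, conservation of $M$ and $E$ along that solution together with the $H^1$-convergence as $t\to\infty$ give the \emph{exact} identities $M(u(T),v(T))=M(\phi^+,\psi^+)$ and $E(u(T),v(T))=\tfrac12 A(\phi^+,\psi^+)$, not merely limits as $T\to\infty$. Thus the threshold inequalities \eqref{EM}--\eqref{EM1} hold at the fixed time $T$ already chosen, and no further enlarging of $T$ is needed before invoking Theorem~\ref{globalPastor}.
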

\begin{proof}
See \cite[Theorem 1.4]{P2015}.
\end{proof}
\begin{remark}
A similar result backward in time also holds assuming that $\phi^-,\psi^-\in H^1$ satisfy \eqref{phipsi+} and the limit is taken as $t\rightarrow -\infty$. 
\end{remark}

Next, we prove a so called long time  perturbation result. This will be necessary in the construction of the critical solution below.

\begin{proposition}[Long time perturbation theory]\label{LTstabteo}
Given any $A>0$, there are $\varepsilon>0$ and $C>0$, depending only on $A$, such that the following statement holds. Assume that $(u,v)\in C([t_0,\infty);H^1\times H^1)$ is a solution of \eqref{3nls} and $(\widetilde{u},\widetilde{v})\in C([t_0,\infty);H^1\times H^1)$ satisfy
\begin{equation}\label{3nlstil}
\begin{cases}
i\partial_t\widetilde{u}+\Delta \widetilde{u}+(|\widetilde{u}|^2+\beta |\widetilde{v}|^2)\widetilde{u}=e_1,\\
i\partial_t\widetilde{v}+\Delta \widetilde{v}+(|\widetilde{v}|^2+\beta |\widetilde{u}|^2)\widetilde{v}=e_2,
\end{cases}
\end{equation}
for suitable functions $e_1$ and $e_2$. If
\begin{equation}\label{st1}
\|(\widetilde{u},\widetilde{v})\|_{S(\Hmeio)\times S(\Hmeio)}\leq A, \quad \|(e_1, e_2)\|_{S'(\dot{H}^{-1/2})\times S'(\dot{H}^{-1/2})}\leq \varepsilon,
\end{equation}
\begin{equation}\label{st2}
\|(e^{i(t-t_0)\Delta}(u(t_0)-\widetilde{u}(t_0)), e^{i(t-t_0)\Delta}(v(t_0)-\widetilde{v}(t_0)))\|_{S(\Hmeio)\times S(\Hmeio)}\leq  \varepsilon,
\end{equation}
then
$$
\|(u-\widetilde{u}, v-\widetilde{v})\|_{S(\Hmeio)\times S(\Hmeio)}\leq C \varepsilon.
$$
\end{proposition}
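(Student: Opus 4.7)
The plan is to follow the standard Kenig-Merle / Duyckaerts-Holmer-Roudenko iterative scheme: subdivide $[t_0,\infty)$ into finitely many subintervals on which $(\widetilde u, \widetilde v)$ has small $S(\Hmeio)$-norm, run a short-time perturbation argument on each piece using Lemma \ref{strilem}, and then iterate across pieces while controlling how constants compound.

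Setting $w=u-\widetilde u$ and $z=v-\widetilde v$, the pair satisfies $i\p_t w+\Delta w = -[N_1(u,v)-N_1(\widetilde u,\widetilde v)]-e_1$ and an analogous equation for $z$, where $N_1(u,v)=(|u|^2+\beta|v|^2)u$ and $N_2(u,v)=(|v|^2+\beta|u|^2)v$. Writing this in Duhamel form starting at a reference time $t_*\in[t_0,\infty)$ and applying the third inhomogeneous Strichartz estimate of Lemma \ref{strilem}, I would obtain on any interval $I\ni t_*$,
\begin{equation*}
\|(w,z)\|_{S(\Hmeio;I)\times S(\Hmeio;I)}\lesssim \|(e^{i(t-t_*)\Delta}w(t_*),e^{i(t-t_*)\Delta}z(t_*))\|_{S(\Hmeio;I)\times S(\Hmeio;I)}+\mathcal{N}+\varepsilon,
\end{equation*}
where $\mathcal{N}$ denotes the $S'(\dot{H}^{-1/2};I)\times S'(\dot{H}^{-1/2};I)$ norm of the nonlinear differences. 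In the same spirit as in the proof of Theorem \ref{teocri} one proves the trilinear bound
\begin{equation*}
\mathcal{N}\lesssim \bigl(\|(\widetilde u,\widetilde v)\|_{S(\Hmeio;I)}^{2}+\|(w,z)\|_{S(\Hmeio;I)}^{2}\bigr)\|(w,z)\|_{S(\Hmeio;I)\times S(\Hmeio;I)},
\end{equation*}
by writing $u=\widetilde u+w$, $v=\widetilde v+z$ and expanding.

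Using \eqref{st1}, I would partition $[t_0,\infty)=\bigcup_{j=0}^{N-1}I_j$ into $N=N(A,\delta_0)$ subintervals on which $\|(\widetilde u,\widetilde v)\|_{S(\Hmeio;I_j)\times S(\Hmeio;I_j)}\leq \delta_0$ for a small absolute constant $\delta_0$ to be fixed. On the first interval $I_0$, combining the two displays above with \eqref{st2} and choosing $\delta_0$ small enough to absorb the $\delta_0^2$-factor into the left-hand side, a standard continuity/bootstrap argument gives $\|(w,z)\|_{S(\Hmeio;I_0)\times S(\Hmeio;I_0)}\leq C_0\varepsilon$. To pass to $I_1$, the new free evolution $(e^{i(t-t_1)\Delta}w(t_1), e^{i(t-t_1)\Delta}z(t_1))$ must be controlled in $S(\Hmeio;I_1)$; expressing $w(t_1)$ via Duhamel from $t_0$ and applying Strichartz to the resulting nonlinear and error terms, one finds a recursion $\varepsilon_{j+1}\leq C_1(\varepsilon_j+\varepsilon)$ for the effective smallness on each piece. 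After $N$ steps this gives a cumulative constant $C_1^N\leq C(A)$, and choosing $\varepsilon=\varepsilon(A)$ small enough at the outset preserves the smallness needed to close every bootstrap step.

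The main obstacle I anticipate is the nonlinear estimate in the system setting: compared with the scalar case in \cite{dhr}, the cubic difference now contains cross terms such as $\beta(|v|^2u-|\widetilde v|^2\widetilde u)$, which I would rewrite as
\begin{equation*}
\beta\bigl[(v-\widetilde v)\overline{v}\,u+\widetilde v(\overline{v}-\overline{\widetilde v})u+|\widetilde v|^{2}(u-\widetilde u)\bigr]
\end{equation*}
and estimate term-by-term in $S'(\dot H^{-1/2})$ using H\"older in space-time together with an $\dot H^{1/2}$ admissible choice of exponents, exactly as in the small-data theory. Once this trilinear estimate is in place, the iteration is routine and depends only on $A$ through the number $N$ of subintervals.
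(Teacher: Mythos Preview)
Your proposal is correct and follows essentially the same route as the paper's proof: define $(w,z)=(u-\widetilde u,v-\widetilde v)$, partition $[t_0,\infty)$ into $N=N(A)$ subintervals on which $\|(\widetilde u,\widetilde v)\|_{S(\Hmeio;I_j)}$ is small, apply Strichartz plus the cubic-difference trilinear estimate to close a bootstrap on each piece, and iterate the resulting recursion $\varepsilon_{j+1}\lesssim \varepsilon_j+\varepsilon$ across the partition. The paper carries this out with the specific pairs $(10/7,5)\in \dot H^{-1/2}$-admissible and $(5,5),(20,10/3)\in\Hmeio$-admissible, but otherwise the structure, the handling of the cross terms, and the dependence of $\varepsilon$ on $A$ through $N$ are exactly as you outline.
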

\begin{proof}
Define $w=u-\widetilde{u}$ and $z=v-\widetilde{v}$. Then, $w$ and $z$ satisfy
\begin{equation}\label{3nlss}
\begin{cases}
i\partial_tw+\Delta w+f_1+e_1=0,\\
i\partial_tz+\Delta z+f_2+e_2=0,
\end{cases}
\end{equation}
with
$$
f_1=|u|^2u+\beta |v|^2u-|\widetilde{u}|^2\widetilde{u}-\beta |\widetilde{v}|^2\widetilde{u}, \quad f_2=|v|^2v+\beta |u|^2v-|\widetilde{v}|^2\widetilde{v}-\beta |\widetilde{u}|^2\widetilde{v}.
$$
Now we can take a partition of the interval $[t_0,\infty)$ into $N$ subintervals (with the number $N$ depending only on $A$) of the form $I_j=[t_j,t_{j+1}]$, $j=0,\ldots,N-1$, such that$ \|\widetilde{u}\|_{S(\Hmeio;I_j)}+\|\widetilde{v}\|_{S(\Hmeio;I_j)}\leq\delta$, where $\delta>0$ will  be appropriately chosen later.
Since 
$$
\|w\|_{S(\Hmeio)}+\|z\|_{S(\Hmeio)}\leq \sum_{j=0}^{N-1}\big(\|w\|_{S(\Hmeio;I_j)}+\|z\|_{S(\Hmeio;I_j)}\big),
$$
it is sufficient to show that, for $j=0,\ldots,N-1$,
\begin{equation}\label{st3}
\|w\|_{S(\Hmeio;I_j)}+\|z\|_{S(\Hmeio;I_j)}\leq C\varepsilon.
\end{equation}

The system \eqref{3nlss} with initial time $t_j$ can be converted into the integral equations
\begin{equation}\label{st3.1}
w(t)=e^{i(t-t_j)\Delta}w(t_j)+i\int_{t_j}^t e^{i(t-s)\Delta}(f_1(s)+e_1(s))ds,
\end{equation}
\begin{equation}\label{st3.2}
z(t)=e^{i(t-t_j)\Delta}z(t_j)+i\int_{t_j}^t e^{i(t-s)\Delta}(f_2(s)+e_2(s))ds.
\end{equation}
Thus applying Strichartz estimates,
\begin{equation}\label{st4}
\|w\|_{S(\Hmeio;I_j)}\leq \|e^{i(t-t_j)\Delta}w(t_j)\|_{S(\Hmeio;I_j)}+\|f_1\|_{L_{I_j}^{10/3}L_x^{5/4}}+\|e_1\|_{S(\Hmeio;I_j)}
\end{equation}
\begin{equation}\label{st5}
\|z\|_{S(\Hmeio;I_j)}\leq \|e^{i(t-t_j)\Delta}z(t_j)\|_{S(\Hmeio;I_j)}+\|f_2\|_{L_{I_j}^{10/3}L_x^{5/4}}+\|e_2\|_{S(\Hmeio;I_j)},
\end{equation}
where we have used that $(10/7,5)$ is an $\dot{H}^{-1/2}$ admissible pair. Observe that
$$
|f_1|\leq (|w|^2+\beta|z|^2)|w|+(|w|^2+\beta|z|^2)|\widetilde{u}|+(|\widetilde{u}|^2+\beta|\widetilde{v}|^2)|w|
$$
and
$$
|f_2|\leq (|z|^2+\beta|w|^2)|z|+(|z|^2+\beta|w|^2)|\widetilde{v}|+(|\widetilde{v}|^2+\beta|\widetilde{u}|^2)|z|.
$$
Therefore, H\"older's inequality yields
\begin{equation}\label{st6}
\begin{split}
\|f_1\|_{L_{I_j}^{10/3}L_x^{5/4}}&\leq \Big(\|w\|_{L_{I_j}^{20}L_x^{10/3}}^2+\beta  \|z\|_{L_{I_j}^{20}L_x^{10/3}}^2 \Big)\|w\|_{L_{I_j}^{5}L_x^{5}}+\Big(\|w\|_{L_{I_j}^{20}L_x^{10/3}}^2 \\
&\;\;\;\;+\beta  \|z\|_{L_{I_j}^{20}L_x^{10/3}}^2 \Big)\|\widetilde{u}\|_{L_{I_j}^{5}L_x^{5}}+\Big(\|\widetilde{u}\|_{L_{I_j}^{20}L_x^{10/3}}^2+\beta  \|\widetilde{v}\|_{L_{I_j}^{20}L_x^{10/3}}^2 \Big)\|w\|_{L_{I_j}^{5}L_x^{5}}\\
&\lesssim \|w\|^3_{S(\Hmeio;I_j)}+\|z\|^3_{S(\Hmeio;I_j)}+\delta^2\|w\|_{S(\Hmeio;I_j)}\\
&\;\;\;\;+\delta\Big( \|w\|^2_{S(\Hmeio;I_j)}+\|z\|^2_{S(\Hmeio;I_j)} \Big)\\
&\lesssim \Big(\|w\|_{S(\Hmeio;I_j)}+\|z\|_{S(\Hmeio;I_j)}\Big)^3+\delta^2\|w\|_{S(\Hmeio;I_j)}\\
&\quad+\delta\Big(\|w\|_{S(\Hmeio;I_j)}+\|z\|_{S(\Hmeio;I_j)}\Big)^2,
\end{split}
\end{equation}
where we used that $(5,5)$ and $(20,10/3)$ are $\Hmeio$ admissible pairs.  In a similar fashion,
\begin{equation}\label{st7}
\begin{split}
\|f_2\|_{L_{I_j}^{10/3}L_x^{5/4}}
&\lesssim \Big(\|w\|_{S(\Hmeio;I_j)}+\|z\|_{S(\Hmeio;I_j)}\Big)^3+\delta^2\|z\|_{S(\Hmeio;I_j)}\\
&\quad+\delta\Big(\|w\|_{S(\Hmeio;I_j)}+\|z\|_{S(\Hmeio;I_j)}\Big)^2.
\end{split}
\end{equation}
By summing equations \eqref{st4} and \eqref{st5}, using \eqref{st6}, \eqref{st7} and the assumption, we deduce the existence of a large constant $c>0$ such that
\begin{equation}\label{st8}
\begin{split}
\|w\|_{S(\Hmeio;I_j)}&+\|z\|_{S(\Hmeio;I_j)}\leq 
A_j+c\varepsilon+c\Big(\|w\|_{S(\Hmeio;I_j)}+\|z\|_{S(\Hmeio;I_j)}\Big)^3\\
&+c\delta^2(\|w\|_{S(\Hmeio;I_j)}+\|z\|_{S(\Hmeio;I_j)})
\quad+c\delta\Big(\|w\|_{S(\Hmeio;I_j)}+\|z\|_{S(\Hmeio;I_j)}\Big)^2,
\end{split}
\end{equation}
with
$$
A_j:=\|e^{i(t-t_j)\Delta}w(t_j)\|_{S(\Hmeio;I_j)}+\|e^{i(t-t_j)\Delta}z(t_j)\|_{S(\Hmeio;I_j)}.
$$

On the other hand, recall that solving \eqref{3nlss} is equivalent to $(\Phi(w),\Psi(z))=(w,z)$, where $\Phi(w)$ and $\Psi(z)$ are the right-hand side of \eqref{st3.1} and \eqref{st3.2}, respectively. Thus, in view of \eqref{st8}, one sees 
\begin{equation}\label{st8.1}
\begin{split}
\|\Phi(w)\|_{S(\Hmeio;I_j)}+\|\Psi(z)\|_{S(\Hmeio;I_j)}\leq &A_j+c\varepsilon+c\Big(\|w\|_{S(\Hmeio;I_j)}+\|z\|_{S(\Hmeio;I_j)}\Big)^3\\
&+c\delta^2(\|w\|_{S(\Hmeio;I_j)}+\|z\|_{S(\Hmeio;I_j)})\\
&+c\delta\Big(\|w\|_{S(\Hmeio;I_j)}+\|z\|_{S(\Hmeio;I_j)}\Big)^2.
\end{split}
\end{equation}
Define
$$
B_a=\{w,z \;\;\mbox{on} \;\;\R^2: \|w\|_{S(\Hmeio;I_j)}+\|z\|_{S(\Hmeio;I_j)}\leq a:=2(A_j+c\varepsilon)\}.
$$

Assume that
\begin{equation}\label{st9}
\delta\leq \frac{1}{\sqrt{6c}}\qquad \mbox{and}\qquad 2(A_j+c\varepsilon)\leq \frac{1}{\sqrt{6c}}.
\end{equation}
With these choice, it is easy to see that the right-hand side of \eqref{st8.1} is bounded by $a$ provided $(w,z)$ belongs to $B_a$. The fixed point theorem then gives
\begin{equation}
\|(w,z)\|_{S(\Hmeio;I_j)\times S(\Hmeio;I_j)}\leq  2\|(e^{i(t-t_j)\Delta}w(t_j),e^{i(t-t_j)\Delta}z(t_j))\|_{S(\Hmeio;I_j)\times S(\Hmeio;I_j)}+2c\varepsilon
\end{equation}

Now we proceed as follows. By taking $t=t_{j+1}$ in the integral equations above and applying $e^{i(t-t_{j+1})\Delta}$, we obtain
$$
e^{i(t-t_{j+1})\Delta}w(t_{j+1})=e^{i(t-t_{j})\Delta}w(t_j)+i\int_{t_j}^{t_{j+1}} e^{i(t-s)\Delta}(f_1(s)+e_1(s))ds,
$$
$$
e^{i(t-t_{j+1})\Delta}z(t_{j+1})=e^{i(t-t_{j})\Delta}z(t_j)+i\int_{t_j}^{t_{j+1}} e^{i(t-s)\Delta}(f_2(s)+e_2(s))ds.
$$
Using that the integral part is confined to the interval $I_j$, one can repeat the above arguments to show that
\begin{equation}\label{st11}
\begin{split}
\|e^{i(t-t_{j+1})\Delta}w(t_{j+1})&\|_{S(\Hmeio)}+\|e^{i(t-t_{j+1})\Delta}z(t_{j+1})\|_{S(\Hmeio)} \\
&\leq \|e^{i(t-t_j)\Delta}w(t_j)\|_{S(\Hmeio)}+\|e^{i(t-t_j)\Delta}z(t_j)\|_{S(\Hmeio)}\\
&\quad+c\varepsilon+c\Big(\|w\|_{S(\Hmeio;I_j)}+\|z\|_{S(\Hmeio;I_j)}\Big)^3+\delta^3.
\end{split}
\end{equation}
Provided
\begin{equation}\label{st12}
\|e^{i(t-t_j)\Delta}w(t_j)\|_{S(\Hmeio)}+\|e^{i(t-t_j)\Delta}z(t_j)\|_{S(\Hmeio)}+c\varepsilon\leq\frac{1}{2\sqrt{6c}},
\end{equation}
we then get
\begin{equation}\label{st13}
\begin{split}
\|e^{i(t-t_{j+1})\Delta}w(t_{j+1})&\|_{S(\Hmeio)}+\|e^{i(t-t_{j+1})\Delta}z(t_{j+1})\|_{S(\Hmeio)} \\
&\leq 2\|e^{i(t-t_j)\Delta}w(t_j)\|_{S(\Hmeio)}+2\|e^{i(t-t_j)\Delta}z(t_j)\|_{S(\Hmeio)}+2c\varepsilon.
\end{split}
\end{equation}
Since \eqref{st12} implies the second assumption in \eqref{st9} it suffices to show \eqref{st12}. Using \eqref{st13} and iterating, one deduces
\[
\begin{split}
\|e^{i(t-t_{j})\Delta}w(t_{j})&\|_{S(\Hmeio)}+\|e^{i(t-t_{j})\Delta}z(t_{j})\|_{S(\Hmeio)} \\
&\leq 2^j\Big(\|e^{i(t-t_0)\Delta}w(t_0)\|_{S(\Hmeio)}+\|e^{i(t-t_0)\Delta}z(t_0)\|_{S(\Hmeio)}\Big)+c\varepsilon\sum_{k=1}^j2^k\\
&\leq 2^j\varepsilon+2^{j+1}c\varepsilon\\
&\leq 2^{j+2}c\varepsilon.
\end{split}
\]
Hence, it suffices to choose $\varepsilon >0$ satisfying 
$$2^{N+2}c\varepsilon+c\varepsilon\leq\frac{1}{2\sqrt{6c}}.
$$
The proof is thus completed.
\end{proof}

\section{Profile and Energy decomposition}\label{sec4}

In this section we first prove a profile expansion for two bounded sequences in $H^1(\R^3)$ in the spirit of Duyckaerts-Holmer-Roudenko \cite{dhr} (see also Keraani \cite{Ke01}). The main point in our result is that the space and time shifts, $x_n^j$ and $t_n^j$, are the same for both sequences. We point out that our proof can be adapted to obtain a similar result for any finite quantities of bounded sequences.

\begin{theorem}[Profile Decomposition]\label{profdec}
Let $\{\phi_n\}$ and  $\{\widetilde{\phi}_n\}$ be two bounded sequences in $H^1(\R^3)$. Then, for each $M\geq1$, there exist subsequences of  $\{\phi_n\}$ and  $\{\widetilde{\phi}_n\}$, still denoted by  $\{\phi_n\}$ and  $\{\widetilde{\phi}_n\}$, respectively, and 
\begin{itemize}
\item[(i)] for each $1\leq j\leq M$, there are profiles  $\{\psi^j\}$ and  $\{\widetilde{\psi}^j\}$ in $H^1$,
\item[(ii)] for each $1\leq j\leq M$, there exists a sequence of time shifts $\{t_n^j\}$ in $\R$,
\item[(iii)] for each $1\leq j\leq M$, there exists a sequence of space shifts $\{x_n^j\}$ in $\R^3$,
\item[(iv)] there are two sequences of remainders $\{W_n^M\}$ and $\{\widetilde{W}_n^M\}$  in $H^1$,
\end{itemize}
such that
$$
(\phi_n(x), \wtphi_n(x))=\sum_{j=1}^M(e^{-it_n^j\Delta}\psi^j(x-x_n^j), e^{-it_n^j\Delta}\wtpsi^j(x-x_n^j))+(W_n^M(x), \wtw_n^M(x)).
$$
For $1\leq j\neq k\leq M$, the time and space sequences satisfy
\begin{equation}\label{pr1}
\lim_{n\to\infty} (|t_n^j-t_n^k|+|x_n^j-x_n^k|)=\infty.
\end{equation}
The remainder sequences satisfy
\begin{equation}\label{pr2}
\lim_{M\to\infty}\Big[\lim_{n\to\infty}\big( \|(e^{it\Delta}W_n^M, e^{it\Delta}\wtw_n^M)\|_{S(\Hmeio)\times S(\Hmeio)}\Big)\Big]=0.
\end{equation}
In addition, for $s\in[0,1]$ and any $M\geq1$, we have the asymptotic expansions
\begin{equation}\label{pr3}
\|\phi_n\|^2_{\dot{H}^s}=\sum_{j=1}^M\|\psi^j\|^2_{\dot{H}^s}+\|W_n^M\|^2_{\dot{H}^s}+o_n(1)
\end{equation}
and
\begin{equation}\label{pr4}
\|\wtphi_n\|^2_{\dot{H}^s}=\sum_{j=1}^M\|\wtpsi^j\|^2_{\dot{H}^s}+\|\wtw_n^M\|^2_{\dot{H}^s}+o_n(1).
\end{equation}
\end{theorem}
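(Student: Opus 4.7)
The plan is to adapt the iterative extraction procedure of Keraani \cite{Ke01}, as carried out for a single sequence in \cite{dhr}, to the pair $(\phi_n,\wtphi_n)$. The essential new point is that at every stage $j$ the pair of profiles $(\psi^j,\wtpsi^j)$ must be produced by a \emph{single} choice of shifts $(t_n^j,x_n^j)\in\R\times\R^3$. I would extract these shifts from whichever of the two remainders still carries a substantial $S(\Hmeio)$-norm of its linear evolution, and then pass to a further subsequence so that the \emph{same} shifts produce a weak limit (possibly zero) for the other remainder. Because $\wtpsi^j$ is allowed to vanish, there is no obstruction to using the $\phi$-driven shifts on the $\wtphi$-side.

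Concretely, set $W_n^0=\phi_n$, $\wtw_n^0=\wtphi_n$, and iterate as follows. Define
\begin{equation*}
\eta_j:=\limsup_{n\to\infty}\bigl(\|e^{it\Delta}W_n^{j-1}\|_{S(\Hmeio)}+\|e^{it\Delta}\wtw_n^{j-1}\|_{S(\Hmeio)}\bigr).
\end{equation*}
If $\eta_j=0$, put $\psi^j=\wtpsi^j=0$ and stop. Otherwise, after passing to a subsequence, assume without loss of generality that $\|e^{it\Delta}W_n^{j-1}\|_{S(\Hmeio)}\geq \eta_j/4$. The inverse Strichartz/refined Sobolev lemma (of Bahouri--Gérard--Keraani type, as used in \cite[Sec.~5]{dhr}) produces a subsequence, shifts $(t_n^j,x_n^j)$, and a profile $\psi^j\in H^1$ with $\|\psi^j\|_{\Hmeio}\gtrsim \eta_j^{\alpha}$ for some fixed $\alpha>0$, such that
\begin{equation*}
e^{it_n^j\Delta}W_n^{j-1}(\cdot+x_n^j)\rightharpoonup \psi^j \quad\text{in }H^1(\R^3).
\end{equation*}
Since $\{\wtw_n^{j-1}\}$ is bounded in $H^1$, a further diagonal extraction yields
\begin{equation*}
e^{it_n^j\Delta}\wtw_n^{j-1}(\cdot+x_n^j)\rightharpoonup \wtpsi^j \quad\text{in }H^1(\R^3),
\end{equation*}
with $\wtpsi^j$ allowed to be zero. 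Set
\begin{equation*}
W_n^j=W_n^{j-1}-e^{-it_n^j\Delta}\psi^j(\cdot-x_n^j),\qquad \wtw_n^j=\wtw_n^{j-1}-e^{-it_n^j\Delta}\wtpsi^j(\cdot-x_n^j).
\end{equation*}

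The orthogonality \eqref{pr1} is proved by contradiction: if $|t_n^j-t_n^k|+|x_n^j-x_n^k|$ remains bounded for some $j<k$, a change of variables in the construction at step $k$ shows that $e^{it_n^k\Delta}W_n^{k-1}(\cdot+x_n^k)$ weakly contains $\psi^j$ as a fixed piece, forcing cancellation that yields $\psi^k=0$ and contradicts $\|\psi^k\|_{\Hmeio}\gtrsim\eta_k^\alpha$. The Pythagorean expansions \eqref{pr3}--\eqref{pr4} follow, for each $s\in[0,1]$ and each of the two sequences independently, from expanding the inner products $\langle W_n^{j-1},e^{-it_n^j\Delta}\psi^j(\cdot-x_n^j)\rangle_{\dot H^s}$ (and the tilded analogue) and using weak convergence together with the orthogonality of shifts. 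Applying \eqref{pr3} with $s=1/2$ and the lower bound $\|\psi^j\|_{\Hmeio}\gtrsim\eta_j^\alpha$ gives $\sum_j\eta_j^{2\alpha}\lesssim\sup_n\|\phi_n\|_{\Hmeio}^2<\infty$, so $\eta_M\to 0$ as $M\to\infty$, which is exactly \eqref{pr2}. The principal obstacle is the coupling through common shifts: one must check that the orthogonality/Pythagorean arguments, which in the single-sequence case rely on the extracted profile being nontrivial, still work on the $\wtphi$-side where $\wtpsi^j$ may vanish. This is handled by observing that the entire extraction of shifts and the lower bound on profile size are driven by the sum $\eta_j$, so trivial tilded profiles are consistent with every step of the argument and do not interfere with the asymptotic orthogonality.
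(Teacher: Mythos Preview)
Your approach is essentially the paper's: iterative extraction of profiles with common time--space shifts, Pythagorean expansions from weak convergence, orthogonality from nontriviality of the freshly extracted profile, and remainder decay from summability of profile norms. The one tactical difference is that the paper chooses the shifts \emph{symmetrically}: working with the $L^\infty_tL^3_x$ norm and a frequency cutoff $\chi$, it locates $(t_n^j,x_n^j)$ at which the \emph{sum} $|\chi\ast e^{it_n^j\Delta}W_n^{j-1}(x_n^j)|+|\chi\ast e^{it_n^j\Delta}\wtw_n^{j-1}(x_n^j)|$ is large, and thereby obtains a lower bound on $\|\psi^j\|_{\Hmeio}+\|\wtpsi^j\|_{\Hmeio}$ directly. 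Your one-sided extraction (from whichever remainder dominates) also works, but two points in your write-up need care.

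First, the ``without loss of generality'' is a \emph{per-step} choice; at different $j$ the dominant side may switch. Hence in the summability step you cannot invoke only \eqref{pr3}: you must either split the indices $j$ according to which side was used and apply \eqref{pr3} or \eqref{pr4} accordingly, or simply note that $\|\psi^j\|_{\Hmeio}^2+\|\wtpsi^j\|_{\Hmeio}^2\gtrsim\eta_j^{2\alpha}$ and sum both Pythagorean expansions, giving $\sum_j\eta_j^{2\alpha}\lesssim\limsup_n(\|\phi_n\|_{\Hmeio}^2+\|\wtphi_n\|_{\Hmeio}^2)<\infty$. Second, in the orthogonality argument for \eqref{pr1}, the profile guaranteed nontrivial at step $k$ is whichever one was actually extracted there (possibly $\wtpsi^k$, not $\psi^k$), so the contradiction must be run on that side; this is fine since $e^{it_n^j\Delta}\wtw_n^{j}(\cdot+x_n^j)\rightharpoonup 0$ holds regardless of whether $\wtpsi^j$ vanished. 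The paper handles both points automatically by its symmetric lower bound and by appealing to a clean abstract lemma (Lemma~\ref{cazlem}) for the divergence of shifts.
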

\begin{proof}
The proof is similar to that of Lemma 1 in \cite{dhr}. The constructions are performed into two main steps.

\noindent{\bf Step 1.} {\it Construction of $\psi^1$ and $\wtpsi^1$}. Let us start by defining
$$
A_1=\limsup_{n\rightarrow \infty}(\|e^{it\Delta}\phi_n\|_{L^\infty_tL^3_x}+\|e^{it\Delta}\wtphi_n\|_{L^\infty_tL^3_x}).
$$
If $(q,r)$ is any $\Hmeio$ admissible pair then, by interpolation,
\begin{equation}\label{pr4.1}
\|e^{it\Delta}\phi_n\|_{L^q_tL^r_x}\leq \|e^{it\Delta}\phi_n\|_{L^4_tL^6_x}^\theta \|e^{it\Delta}\phi_n\|_{L^\infty_tL^3_x}^{1-\theta},
\end{equation}
where $\theta=4/q\in(0,1)$. Since $(4,6)$ is $\Hmeio$ admissible, Strichartz's estimates combined with the boundedness of $\{\phi_n\}$ in $\Hmeio$ imply that $\|e^{it\Delta}\phi_n\|_{L^q_tL^r_x}\leq C \|e^{it\Delta}\phi_n\|_{L^\infty_tL^3_x}^{1-\theta}$. Thus, if $A_1=0$ we promptly deduce that 
$$
\limsup_{n\rightarrow \infty}\|e^{it\Delta}\phi_n\|_{S(\Hmeio)}=0
$$ 
and (similarly) 
$$
\limsup_{n\rightarrow \infty}\|e^{it\Delta}\wtphi_n\|_{S(\Hmeio)}=0.
$$
Consequently, we may take $\psi^j=\wtpsi^j=0$, for all $1\leq j\leq M$.

Assume now $A_1>0$ and define
$$
c_1=\limsup_{n\rightarrow \infty}(\|\phi_n\|_{H^1}+\|\wtphi_n\|_{H^1}).
$$
Let $r>0$ be a real number such that $\frac{c_1}{\sqrt{r}}=\frac{A_1}{8}$. Take $\chi$ to be a real Schwartz function satisfying  $\widehat{\chi}(\xi)=1$ for $\frac{1}{r}\leq|\xi|\leq r$ and ${\rm supp}(\widehat{\chi})\subset\left[\frac{1}{2r},2r\right]$. Using the Sobolev embedding $\Hmeio(\R^3)\hookrightarrow L^3(\R^3)$ and the definition of $\chi$, it is not difficult to see that (see, for instance, proof of Lemma 5.2 in \cite{hr})
$$
\|e^{it\Delta}\phi_n-\chi\ast e^{it\Delta}\phi_n\|_{L^\infty_tL^3_x}^2\leq \frac{c_1^2}{r}
$$
and
$$
\|e^{it\Delta}\wtphi_n-\chi\ast e^{it\Delta}\wtphi_n\|_{L^\infty_tL^3_x}^2\leq \frac{c_1^2}{r}.
$$
These last two inequalities imply that
\begin{equation}\label{pr5}
\|\chi\ast e^{it\Delta}\phi_n\|_{L^\infty_tL^3_x}+ \|\chi\ast e^{it\Delta}\wtphi_n\|_{L^\infty_tL^3_x}\geq  \|e^{it\Delta}\phi_n\|_{L^\infty_tL^3_x} + \|e^{it\Delta}\wtphi_n\|_{L^\infty_tL^3_x}-\frac{2c_1}{\sqrt{r}}.
\end{equation}

By the definitions of $A_1$ and $r$, we can assume that the right-hand side of \eqref{pr5} is larger than $\frac{3A_1}{4}-\frac{A_1}{4}=\frac{A_1}{2}$, that is,
\begin{equation}\label{pr6}
\|\chi\ast e^{it\Delta}\phi_n\|_{L^\infty_tL^3_x}+ \|\chi\ast e^{it\Delta}\wtphi_n\|_{L^\infty_tL^3_x}\geq  \frac{A_1}{2}.
\end{equation}
Now, the standard  interpolation for Lebesgue norm and the fact that $\{e^{it\Delta}\}$ is a unitary group in $L^2$ imply
\begin{equation}\label{pr7}
\begin{split}
\|\chi\ast e^{it\Delta}\phi_n\|_{L^\infty_tL^3_x}^3&\leq \|\chi\ast e^{it\Delta}\phi_n\|_{L^\infty_tL^2_x}^2\|\chi\ast e^{it\Delta}\phi_n\|_{L^\infty_tL^\infty_x}\\
& \leq \|\phi_n\|_{L^2}^2\|\chi\ast e^{it\Delta}\phi_n\|_{L^\infty_tL^\infty_x}\\
& \leq c_1^2\|\chi\ast e^{it\Delta}\phi_n\|_{L^\infty_tL^\infty_x}.
\end{split}
\end{equation}
Hence,
\begin{equation}\label{pr8}
\begin{split}
\Big(\|\chi\ast e^{it\Delta}\phi_n\|_{L^\infty_tL^3_x}+&\|\chi\ast e^{it\Delta}\wtphi_n\|_{L^\infty_tL^3_x}\Big)^3\\
& \leq 4\Big(\|\chi\ast e^{it\Delta}\phi_n\|_{L^\infty_tL^3_x}^3
 +\|\chi\ast e^{it\Delta}\wtphi_n\|_{L^\infty_tL^3_x}^3\Big)\\
&\leq 4c_1^2\Big(\|\chi\ast e^{it\Delta}\phi_n\|_{L^\infty_tL^\infty_x}
+\|\chi\ast e^{it\Delta}\wtphi_n\|_{L^\infty_tL^\infty_x}\Big).
\end{split}
\end{equation}
Inequalities \eqref{pr6} and \eqref{pr8} yield
$$
\|\chi\ast e^{it\Delta}\phi_n\|_{L^\infty_tL^\infty_x}+ \|\chi\ast e^{it\Delta}\wtphi_n\|_{L^\infty_tL^\infty_x} \geq \frac{A_1^3}{32c_1^2}.
$$
From the last estimate it is clear that
$$
\max\left\{\|\chi\ast e^{it\Delta}\phi_n\|_{L^\infty_tL^\infty_x}, \|\chi\ast e^{it\Delta}\wtphi_n\|_{L^\infty_tL^\infty_x}\right\}\geq \frac{A_1^3}{64c_1^2}.
$$
Now we can take, for each $n$, $t_n^1\in \R$ and $x_n^1\in\R^3$ such that
$$
\max\left\{|\chi\ast e^{it_n^1\Delta}\phi_n(x_n^1)|, |\chi\ast e^{it_n^1\Delta}\wtphi_n(x_n^1)|\right\} \geq \frac{A_1^3}{128c_1^2},
$$
which implies
\begin{equation}\label{pr9}
|\chi\ast e^{it_n^1\Delta}\phi_n(x_n^1)|+ |\chi\ast e^{it_n^1\Delta}\wtphi_n(x_n^1)| \geq \frac{A_1^3}{128c_1^2}.
\end{equation}

Since $\{ e^{it_n^1\Delta}\phi_n(\cdot+x_n^1)\}$ and $\{ e^{it_n^1\Delta}\wtphi_n(\cdot+x_n^1)\}$ are bounded sequences in the Hilbert space $H^1$, there exist $\psi^1$ and $\wtpsi^1$ such that (up to a subsequence)
$$
e^{it_n^1\Delta}\phi_n(\cdot+x_n^1)\rightharpoonup \psi^1 \quad \mbox{and}\quad e^{it_n^1\Delta}\wtphi_n(\cdot+x_n^1)\rightharpoonup\wtpsi^1,
$$
in $H^1$. By definition of convolution and weak convergence, we deduce from \eqref{pr9},
$$
\left| \int_{\R^3}\chi(y)\psi^1(-y)dy\right|+\left| \int_{\R^3}\chi(y)\wtpsi^1(-y)dy\right|\geq  \frac{A_1^3}{64c_1^2}.
$$
H\"older's inequality and Plancherel's identity give
$$
\|\chi\|_{\dot{H}^{-1/2}}\|\psi^1\|_{\Hmeio}+\|\chi\|_{\dot{H}^{-1/2}}\|\wtpsi^1\|_{\Hmeio}\geq \frac{A_1^3}{64c_1^2}.
$$
Taking note that $\|\chi\|_{\dot{H}^{-1/2}}\leq r$, we deduce the existence of a constant $K>0$ such that
\begin{equation}\label{pr10}
\|\psi^1\|_{\Hmeio}+\|\wtpsi^1\|_{\Hmeio}\geq \frac{A_1^5}{Kc_1^4}.
\end{equation}
Define $W_n^1(x)=\phi_n(x)-e^{-it_n^1\Delta}\psi^1(x-x_n^1)$ and $\wtw_n^1(x)=\wtphi_n(x)-e^{-it_n^1\Delta}\wtpsi^1(x-x_n^1)$. Using the definition of $W_n^1$ and the weak convergence, it is easy to see that, for any $s\in[0,1]$,
$$
\|W_n^1\|^2_{\dot{H}^s}=\|e^{it_n^1\Delta}\phi_n(\cdot+x_n^1)\|^2_{\dot{H}^s}-\|\psi^1\|^2_{\dot{H}^s}+o_n(1).
$$
The group property immediately gives \eqref{pr3} in the case $M=1$. The expansion \eqref{pr4} is proved in a similar fashion.

\noindent{\bf Step 2.} {\it Construction of $\psi^j$ and $\wtpsi^j$}. The construction is by induction. Let $M\geq2$ and assume we have constructed the sequences (in $n$) $\{t_n^j\}$, $\{x_n^j\}$, and the functions $\psi^j$ and $\wtpsi^j$ for $j\in\{1,\ldots,M-1\}$. Note that by construction (see \eqref{pr3} and \eqref{pr4}) the sequences $\{W_n^j\}$ and $\{\wtw_n^j\}$ are uniformly bounded in $H^1$, for any $j\in\{1,\ldots,M-1\}$. Let us define
$$
A_M=\limsup_{n\rightarrow \infty}(\|e^{it\Delta}W_n^{M-1}\|_{L^\infty_tL^3_x}+\|e^{it\Delta}\wtw_n^{M-1}\|_{L^\infty_tL^3_x}).
$$
If $A_M=0$, as in Step 1, we may take $\psi^j=\wtpsi^j=0$ for all $j\geq M$. If $A_M>0$, we apply Step 1 to the sequences $\{W_n^{M-1}\}$ and $\{\wtw_n^{M-1}\}$. Thus, up to subsequences, we find $\{t_n^M\}$, $\{x_n^M\}$ and functions $\psi^M,\wtpsi^M\in H^1$ such that
\begin{equation}\label{pr11}
e^{it_n^M\Delta}W_n^{M-1}(\cdot+x_n^M)\rightharpoonup \psi^M, \quad  e^{it_n^M\Delta}\wtw_n^{M-1}(\cdot+x_n^M)\rightharpoonup\wtpsi^M,
\end{equation}
and (see \eqref{pr10})
\begin{equation}\label{pr12}
\|\psi^M\|_{\Hmeio}+\|\wtpsi^M\|_{\Hmeio}\geq \frac{A_M^5}{Kc_M^4},
\end{equation}
where $c_M=\limsup_{n\rightarrow \infty}(\|W_n^{M-1}\|_{H^1}+\|\wtw_n^{M-1}\|_{H^1})$. As in Step 1, we define
$$
W_n^M(x)=W_n^{M-1}(x)-e^{-it_n^M\Delta}\psi^M(x-x_n^M), \quad
\wtw_n^M(x)=\wtw_n^{M-1}(x)-e^{-it_n^M\Delta}\wtpsi^M(x-x_n^M).
$$
The weak convergence implies
$$
\|W_n^M\|_{\dot{H}^s}=\|W_n^{M-1}\|_{\dot{H}^s}-\|\psi^M\|_{\dot{H}^s}+o_n(1)
$$
The induction assumption then yields \eqref{pr3} at rank $M$. One proves \eqref{pr4} in a similar manner.

In order to show that \eqref{pr1} holds we need the following lemma.

\begin{lemma} \label{cazlem}
Let $\{t_n\}\subset\R$ and $\{x_n\}\subset\R^3$ be two sequences.
\begin{itemize}
\item[(i)] If
\begin{equation}\label{pr13}
\lim_{n\to\infty}\left(|t_n|+|x_n|\right)=\infty,
\end{equation}
then,  for any $\psi\in H^1$,
$$
e^{it_n\Delta}\psi(\cdot+x_n)\rightharpoonup0, \quad in \; H^1.
$$

\item[(ii)]  If $\{z_n\}\subset H^1$ is such that 
$$
z_n\rightharpoonup0 \quad \mbox{and} \quad e^{it_n\Delta}z_n(\cdot+x_n)\rightharpoonup\psi, \quad in \; H^1,
$$
 for some $\psi\in H^1\setminus\{0\}$, then \eqref{pr13} holds.
\end{itemize}
\end{lemma}
\begin{proof}
See Lemma 5.3 in \cite{fxc}.
\end{proof}

Now let us prove that \eqref{pr1} holds at rank $M$. First of all, note that \eqref{pr12} implies that either $\psi^M$ or $\wtpsi^M$ is nontrivial. Let us suppose, without loss of generality, that $\psi^M\neq0$. Assume by induction that \eqref{pr1} holds for $j,k\in\{1,\ldots,M-1\}$. Under the convention $W_n^0=\phi_n$, note that, for all (fixed) $j\in\{1,\ldots,M-1\}$,
\begin{equation}\label{pr14}
e^{it_n^j\Delta}W_n^{j-1}(x+x_n^j)-e^{it_n^j\Delta}W_n^{M-1}(x+x_n^j)-\psi^j= \sum_{k=j+1}^{M-1}e^{i(t_n^j-t_n^k)\Delta}\psi^k(x+x_n^j-x_n^k).
\end{equation}
Part (i) of Lemma \ref{cazlem} implies that the right-hand side of \eqref{pr14} converges weakly to 0 in $H^1$. Thus, since $e^{it_n^j\Delta}W_n^{j-1}(x+x_n^j)\rightharpoonup0$, in $H^1$, it follows from \eqref{pr14} that
$$
z_n=e^{it_n^j}W_n^{M-1}(\cdot+x_n^j)\rightharpoonup0.
$$
 In addition,
$$
e^{i(t_n^M-t_n^j)\Delta}z_n(\cdot+x_n^M-x_n^j)=e^{it_n^M\Delta}W_n^{M-1}(\cdot+x_n^M)\rightharpoonup\psi^M, \quad \mbox{in} \; H^1.
$$
Since $\psi^M\neq0$, an application of Lemma \ref{cazlem} (ii) gives
$$
\lim_{n\to\infty}(|t_n^M-t_n^j|+|x_n^M-x_n^j|)=\infty.
$$
This proves \eqref{pr1} at rank $M$.

Finally, let us prove \eqref{pr2}. If $A_{M_0}=0$ for some $M_0\geq1$, according to the construction, there is nothing to prove. Assume that $A_M>0$ for all $M\geq1$.
Let $(q,r)$ be any $\Hmeio$ admissible pair. The inequality \eqref{pr4.1} and Lemma \ref{strilem} (i) imply that
$$
\|e^{it\Delta}W_n^M\|_{L^q_tL^r_x}\leq\|W_n^M\|^{\theta}_{\Hmeio}\|e^{it\Delta}W_n^M\|^{1-\theta}_{L^\infty_tL^3_x}.
$$
A similar estimate holds with $\wtw_n^M$ instead of $W_n^M$. In view of \eqref{pr3} and \eqref{pr4}, the sequences $\{W_n^M\}$ and $\{\wtw_n^M\}$ are bounded in $\Hmeio$ and $c_M\leq c_1$. Thus, in order to establish \eqref{pr2} it suffices to show that $\lim_{M\to\infty}A_M=0$. But, from \eqref{pr12}, \eqref{pr3}, and  \eqref{pr4},
\begin{equation*}
\begin{split}
\sum_{M=1}^\infty A_M^{10}&\leq cK^2c_1^8 \sum_{M=1}^\infty\left(\|\psi^M\|^2_{\Hmeio}+\|\wtpsi^M\|^2_{\Hmeio}\right)\\
&\leq cK^2c_1^8\limsup_{n\rightarrow \infty}\left(\|\phi_n\|^2_{\Hmeio}+\|\widetilde{\phi}_n\|^2_{\Hmeio}\right)<\infty.
\end{split}
\end{equation*}
This then leads to $\lim_{M\to\infty}A_M=0$ and concludes the proof of the theorem.
\end{proof}

\begin{lemma} \label{lemma1}
In the situation of Theorem \ref{profdec}, we have
\begin{itemize}
\item[(i)] ${\displaystyle \left\| \sum_{j=1}^Me^{-it_n^j\Delta}\psi^j(\cdot-x_n^j)\right\|^4_{L^4_x}=\sum_{j=1}^M\left\|e^{-it_n^j\Delta}\psi^j\right\|^4_{L^4_x}+o_n(1)};$
\item[(ii)]${\displaystyle \lim_{M\to\infty}\left( \lim_{n\to\infty}\|W_n^M\|_{L^4_x} \right)=0};$
\item[(iii)] ${\displaystyle \|\phi_n\|^4_{L^4_x}=\sum_{j=1}^M\|e^{-it_n^j\Delta}\psi^j\|^4_{L^4_x}+\|W_n^M\|^4_{L^4_x}+o_n(1). }$
\end{itemize}
The same conclusions hold for $\wtphi_n,\wtpsi^j$, and $\wtw_n^M$.
\end{lemma}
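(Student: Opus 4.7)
The plan is to prove (i), (ii), (iii) in that order, since each part builds on the previous and the main technical content is concentrated in a single bilinear pseudo-orthogonality estimate. Set $f_j^n:=e^{-it_n^j\Delta}\psi^j(\cdot-x_n^j)$ and $S_n^M:=\sum_{j=1}^M f_j^n$, so that (i) becomes $\|S_n^M\|_{L^4_x}^4=\sum_{j=1}^M\|f_j^n\|_{L^4_x}^4+o_n(1)$. Expanding $|S_n^M|^4=(S_n^M\,\overline{S_n^M})^2=\sum_{j_1,j_2,k_1,k_2}f_{j_1}^n f_{j_2}^n\overline{f_{k_1}^n}\,\overline{f_{k_2}^n}$, the diagonal contribution (all four indices equal) is exactly $\sum_j\|f_j^n\|_{L^4_x}^4$, while each remaining quartic integral has some pair of indices that disagree and is controlled by Hölder in terms of a constant multiple of $\bigl(\int|f_j^n|^2|f_k^n|^2\,dx\bigr)^{1/2}$ for some $j\neq k$. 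Hence (i) reduces to showing
\[
\int|f_j^n|^2|f_k^n|^2\,dx\longrightarrow 0\qquad\text{whenever }j\neq k.
\]

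To establish this bilinear estimate I would first approximate $\psi^j,\psi^k$ in $H^1$ by Schwartz functions $\tilde\psi^j,\tilde\psi^k$; since $H^1(\R^3)\hookrightarrow L^4(\R^3)$ and $e^{-it\Delta}$ is an $H^1$-isometry, the resulting errors in the $L^4_x$ norms of $f_j^n,f_k^n$ are uniformly small in $n$ and can be absorbed into the $o_n(1)$. For the Schwartz approximants I would run a dichotomy based on \eqref{pr1}: after passing to a subsequence, either (A) the sequence $t_n^j-t_n^k$ stays bounded, which by \eqref{pr1} forces $|x_n^j-x_n^k|\to\infty$, so that after a further subsequence with $t_n^j\to t^*_j$ and $t_n^k\to t^*_k$ each approximant is close in $L^4_x$ to a fixed Schwartz function translated by $x_n^j$ (resp.\ $x_n^k$), and the integral vanishes by dominated convergence as the spatial supports separate; or (B) $|t_n^j-t_n^k|\to\infty$, in which case at least one of $|t_n^j|,|t_n^k|$ must diverge along a subsequence, and Lemma \ref{Decay} with $p=4$, $p'=4/3$ gives $\|e^{-it_n^\ell\Delta}\tilde\psi^\ell\|_{L^4_x}\lesssim |t_n^\ell|^{-3/4}\|\tilde\psi^\ell\|_{L^{4/3}_x}\to 0$ for the relevant index $\ell$, after which Cauchy--Schwarz closes the estimate.

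For (ii), the key observation is that the pair $(q,r)=(\infty,3)$ is $\Hmeio$-admissible and is explicitly included in the supremum defining $\|\cdot\|_{S(\Hmeio)}$; evaluating the linear flow at $t=0$ therefore gives
\[
\|W_n^M\|_{L^3_x}\leq\|e^{it\Delta}W_n^M\|_{L^\infty_tL^3_x}\leq\|e^{it\Delta}W_n^M\|_{S(\Hmeio)},
\]
which tends to $0$ in the double limit by \eqref{pr2}. Since $\{W_n^M\}$ is uniformly bounded in $H^1\hookrightarrow L^6$, the Hölder interpolation $\|W_n^M\|_{L^4_x}^4\leq\|W_n^M\|_{L^3_x}^2\|W_n^M\|_{L^6_x}^2$ delivers (ii). For (iii), I would expand $|S_n^M+W_n^M|^4$: part (i) already extinguishes the purely profile-profile cross terms, and the remaining mixed cross integrals, such as $\int|f_j^n|^2|W_n^M|^2$ and $\int f_j^n\overline{f_k^n}|W_n^M|^2$, are handled by the same dichotomy as in the bilinear estimate above, with the weak convergence $e^{it_n^j\Delta}W_n^M(\cdot+x_n^j)\rightharpoonup 0$ in $H^1$---which follows inductively on $M$ from the construction of the profiles, \eqref{pr1}, and Lemma \ref{cazlem}(i)---playing the role that Lemma \ref{cazlem}(i) plays in the two-profile pseudo-orthogonality; Rellich compactness covers case (A) and the dispersive decay covers case (B).

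The step I expect to be the main obstacle is the bilinear pseudo-orthogonality in (i): the two orthogonality mechanisms---spatial separation of translates and dispersive temporal decay---must be combined through a careful dichotomy on $(t_n^j-t_n^k,x_n^j-x_n^k)$, and one must ensure that the Schwartz approximation errors remain uniformly controlled in $n$. Once that estimate is secured, (ii) is a direct application of Strichartz together with Sobolev, and (iii) reduces to the same type of bookkeeping with $W_n^M$ playing essentially the role of an additional profile.
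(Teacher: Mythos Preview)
Your proof is essentially correct and follows the standard strategy (the paper itself merely cites \cite{dhr}); there is one small gap to patch. In your dichotomy for part (i), case (A) assumes that boundedness of $t_n^j-t_n^k$ lets you pass to a subsequence along which $t_n^j\to t^*_j$ and $t_n^k\to t^*_k$ individually, but this need not hold: both sequences could diverge while keeping a bounded gap. The fix is immediate---in that sub-case dispersive decay already kills $\|e^{-it_n^j\Delta}\tilde\psi^j\|_{L^4_x}$---but the cleanest organization (and the one the paper uses in the proof of Lemma~\ref{lemma2}) is to run the dichotomy on each individual sequence $\{t_n^j\}$ rather than on the differences: reorder indices so that for $j\leq M_0$ one has $t_n^j\to\bar t^j$ (and may take $t_n^j=0$ after adjusting the profile by a phase), while for $j>M_0$ one has $|t_n^j|\to\infty$; then only the block $j\leq M_0$ survives in $L^4_x$, and the cross terms there vanish by pure spatial separation via \eqref{pr1}.

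For part (iii) your direct-expansion approach, using $e^{it_n^j\Delta}W_n^M(\cdot+x_n^j)\rightharpoonup 0$ together with Rellich compactness, does work. The route in \cite{dhr} (mirrored in the paper's proof of Lemma~\ref{lemma3}) is slightly different and avoids the weak-convergence bookkeeping: one introduces an auxiliary index $M_1\geq M$, uses the elementary inequality $\bigl||a+b|^4-|a|^4-|b|^4\bigr|\lesssim |a|^3|b|+|a||b|^3$ with $b=W_n^{M_1}$, and sends $M_1\to\infty$ using (ii) to discard $W_n^{M_1}$; both $\phi_n-W_n^{M_1}$ and $W_n^M-W_n^{M_1}$ are then pure sums of profiles, to which (i) applies directly. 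Either route yields the same conclusion.
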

\begin{proof}
See proof of Lemma 2.3 in \cite{dhr}.
\end{proof}

\begin{lemma} \label{lemma2}
In the situation of Theorem \ref{profdec}, we have
\begin{equation}\label{pr15}
\left\| \sum_{j=1}^Me^{-it_n^j\Delta}\psi^j(\cdot-x_n^j)e^{-it_n^j\Delta}\wtpsi^j(\cdot-x_n^j)\right\|^2_{L^2_x}=\sum_{j=1}^M\left\|e^{-it_n^j\Delta}\psi^j e^{-it_n^j\Delta}\wtpsi^j\right\|^2_{L^2_x}+o_n(1).
\end{equation}
\end{lemma}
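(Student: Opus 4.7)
The plan is to expand the square of the $L^2$-norm on the left-hand side of \eqref{pr15} as a double sum and prove that every off-diagonal term tends to zero. Writing $h_n^j(x) := e^{-it_n^j\Delta}\psi^j(x-x_n^j)\,e^{-it_n^j\Delta}\wtpsi^j(x-x_n^j)$, one has
\[
\left\|\sum_{j=1}^{M} h_n^j\right\|_{L^2}^2 \;=\; \sum_{j=1}^{M}\|h_n^j\|_{L^2}^2 \;+\; 2\,\mathrm{Re}\sum_{1\le j<k\le M}\langle h_n^j, h_n^k\rangle_{L^2},
\]
so the lemma reduces to showing $\langle h_n^j,h_n^k\rangle_{L^2}\to 0$ for every $j\neq k$. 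A uniform bound $\|h_n^j\|_{L^2}\le \|e^{-it_n^j\Delta}\psi^j\|_{L^4}\|e^{-it_n^j\Delta}\wtpsi^j\|_{L^4}\lesssim \|\psi^j\|_{H^1}\|\wtpsi^j\|_{H^1}$, obtained via H\"older's inequality and the Sobolev embedding $H^1\hookrightarrow L^4$, lets us reduce by density to the case where all profiles $\psi^j,\wtpsi^j$ are Schwartz functions.

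After the change of variables $y=x-x_n^j$, the cross term takes the form
\[
\langle h_n^j,h_n^k\rangle_{L^2} \;=\; \int_{\R^3}\Phi^j(y,t_n^j)\,\overline{\Phi^k(y+x_n^j-x_n^k,\,t_n^k)}\,dy,
\]
where $\Phi^\ell(\cdot,t):=(e^{-it\Delta}\psi^\ell)(e^{-it\Delta}\wtpsi^\ell)$. The essential tool is the dispersive estimate of Lemma \ref{Decay} with $p=4,\,p'=4/3$, which for Schwartz profiles yields $\|e^{-it\Delta}\psi^\ell\|_{L^4}\lesssim |t|^{-3/4}$ and hence, by H\"older,
\[
\|\Phi^\ell(\cdot,t)\|_{L^2}\;\lesssim\;|t|^{-3/2}\quad \text{as}\quad |t|\to\infty,
\]
while $\|\Phi^\ell(\cdot,t)\|_{L^2}$ remains uniformly bounded by Sobolev embedding.

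I would then split into two cases using the pairwise orthogonality \eqref{pr1}. If $|t_n^j-t_n^k|\to\infty$ along a subsequence, at least one of $|t_n^j|,|t_n^k|$ tends to infinity, and by the dispersive decay one of $\|\Phi^j(\cdot,t_n^j)\|_{L^2}$, $\|\Phi^k(\cdot,t_n^k)\|_{L^2}$ tends to zero; Cauchy--Schwarz closes this case. If instead $|t_n^j-t_n^k|$ stays bounded, then \eqref{pr1} forces $|x_n^j-x_n^k|\to\infty$. Passing to a further subsequence we may assume $t_n^k\to t^*\in[-\infty,\infty]$. When $t^*=\pm\infty$ both $t_n^j$ and $t_n^k$ tend to infinity (since their difference is bounded) and dispersive decay together with Cauchy--Schwarz finishes the case. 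When $t^*$ is finite, $t_n^j$ is also bounded and, by strong $H^1$-continuity of $e^{-it\Delta}$ and Sobolev embedding, $\Phi^j(\cdot,t_n^j)\to \Phi^j(\cdot,t^{**})$ and $\Phi^k(\cdot,t_n^k)\to \Phi^k(\cdot,t^*)$ in $L^2$ for some finite $t^{**}$; the cross term then reduces, up to a vanishing error, to a fixed $L^2$ inner product with one factor translated by $x_n^j-x_n^k\to\infty$, which tends to zero because translations by diverging vectors converge weakly to zero in $L^2$.

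The main technical point is the regime where $t_n^j$ and $t_n^k$ both diverge while their difference stays bounded and the spatial shifts also diverge: there the classical argument using only spatial translation is insufficient, and one genuinely needs the global-in-time $L^2$-decay of the product $\Phi^\ell(\cdot,t)$ supplied by Lemma \ref{Decay}. Ensuring that the initial Schwartz approximation introduces only errors that are small uniformly in $n$ is routine given the $n$-independent bound on $\|h_n^j\|_{L^2}$ indicated above.
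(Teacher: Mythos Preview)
Your argument is correct and uses the same two ingredients as the paper's proof: the dispersive $L^4$ decay of $e^{-it\Delta}$ (Lemma~\ref{Decay}) to kill contributions when a time shift diverges, and the spatial orthogonality $|x_n^j-x_n^k|\to\infty$ together with weak convergence of translations when the time shifts stay bounded. The only organizational difference is that the paper splits according to whether each individual $|t_n^j|$ diverges or stays bounded (so that profiles with $|t_n^j|\to\infty$ contribute nothing to either side and one is reduced to fixed-time profiles with diverging spatial shifts), whereas you split the cross terms according to the behavior of $|t_n^j-t_n^k|$; this forces you to treat separately the sub-case where both $t_n^j,t_n^k$ diverge with bounded difference, which the paper's organization absorbs automatically, but the substance is identical.
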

\begin{proof}
The proof is similar to that of part (i) in Lemma \ref{lemma1} (see \cite{dhr}). Indeed, reordering indices, we can obtain $M_0\leq M$ such that 
\begin{itemize}
\item[(i)] For $M_0+1\leq j\leq M$, we have, up to a subsequence (in $n$), $|t_n^j|\to\infty$, as $n\to\infty$.
\item[(ii)] For $1\leq j\leq M_0$, we have, up to a subsequence (in $n$), $|t_n^j|\to\overline{t}^j$, as $n\to\infty$.
\end{itemize}
In the first case, Corollary 2.3.7 in \cite{caz} (see also \eqref{L4}) and H\"older's inequality  imply
\begin{equation}\label{pr16}
\left\|e^{-it_n^j\Delta}\psi^j e^{-it_n^j\Delta}\wtpsi^j\right\|^2_{L^2_x}\leq
\left\|e^{-it_n^j\Delta}\psi^j\right\|^2_{L^4_x} \left\|e^{-it_n^j\Delta}\wtpsi^j\right\|^2_{L^4_x}\to0,
\end{equation}
as $n\to\infty$. So, this part has no contribution to \eqref{pr15}.

In the second case, by multiplying the profiles $\psi^j$ and $\wtpsi^j$ by a phase factor, we can assume that $t_n^j=0$. In addition,  the orthogonality condition \eqref{pr1}, implies that $\lim_{n\to\infty}|x_n^j-x_n^k|\to\infty$, for $1\leq j<k\leq M_0$. Thus the ``cross terms'' in the expansion of the left-hand side of \eqref{pr15} goes to zero by the Dominated Convergence Theorem, giving
$$
\left\| \sum_{j=1}^{M_0}\psi^j(\cdot-x_n^j)\wtpsi^j(\cdot-x_n^j)\right\|^2_{L^2_x}=\sum_{j=1}^{M_0}\left\|\psi^j\wtpsi^j\right\|^2_{L^2_x}+o_n(1).
$$
This combined with \eqref{pr16} completes the proof of the lemma.
\end{proof}

\begin{lemma} \label{lemma3}
In the situation of Theorem \ref{profdec}, we have for any fixed $M\geq1$,
\begin{equation}\label{pr17}
\left\| \phi_n\wtphi_n\right\|^2_{L^2_x}=\sum_{j=1}^M\left\|e^{-it_n^j\Delta}\psi^j e^{-it_n^j\Delta}\wtpsi^j\right\|^2_{L^2_x}+ \|W_n^M\wtw_n^M\|^2_{L^2_x}+o_n(1).
\end{equation}
\end{lemma}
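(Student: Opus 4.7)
The plan is to adapt the expansion strategy used in Lemmas \ref{lemma1} and \ref{lemma2}. Writing $a_n^j := e^{-it_n^j\Delta}\psi^j(\cdot - x_n^j)$ and $\widetilde{a}_n^j := e^{-it_n^j\Delta}\wtpsi^j(\cdot - x_n^j)$, I would decompose
$$
\phi_n \wtphi_n \;=\; \sum_{j=1}^M a_n^j \widetilde{a}_n^j \;+\; W_n^M \wtw_n^M \;+\; R_n,
$$
where $R_n$ collects the off-diagonal profile products $\sum_{j \neq k} a_n^j \widetilde{a}_n^k$ together with the mixed profile-remainder terms $\sum_j a_n^j \wtw_n^M$ and $\sum_k W_n^M \widetilde{a}_n^k$. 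Expanding $\|\phi_n \wtphi_n\|^2_{L^2_x}$ from this decomposition and invoking Lemma \ref{lemma2} to replace $\|\sum_j a_n^j \widetilde{a}_n^j\|^2_{L^2_x}$ by $\sum_j \|a_n^j \widetilde{a}_n^j\|^2_{L^2_x}+o_n(1)$, the lemma reduces to two claims: (i) $\|R_n\|_{L^2_x} = o_n(1)$, and (ii) $\bigl\langle \sum_j a_n^j \widetilde{a}_n^j\,,\, W_n^M \wtw_n^M\bigr\rangle_{L^2_x} = o_n(1)$. The leftover cross terms in the square expansion are then dispatched by Cauchy--Schwarz together with the uniform $L^2_x$ bound on $W_n^M \wtw_n^M$, which comes from the $H^1$ bound on the remainders and the Sobolev embedding $H^1 \hookrightarrow L^4$.

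The terms in $R_n$ are treated one by one. For an off-diagonal product with $j \neq k$, translating $x \mapsto x + x_n^j$ and using the unitarity of $e^{it\Delta}$ reduces matters to bounding
$$
\bigl\|\psi^j(x)\, e^{i(t_n^j - t_n^k)\Delta}\wtpsi^k(x + x_n^j - x_n^k)\bigr\|_{L^2_x},
$$
and by the orthogonality \eqref{pr1} combined with Lemma \ref{cazlem}(i) the second factor tends weakly to $0$ in $H^1$. For a profile-remainder product $a_n^j \wtw_n^M$, the same translation yields $\|\psi^j \cdot g_n^{j,M}\|_{L^2_x}$ with $g_n^{j,M} := e^{it_n^j\Delta}\wtw_n^M(\cdot + x_n^j)$; iterating Step 2 of Theorem \ref{profdec} together with \eqref{pr1} gives $g_n^{j,M} \rightharpoonup 0$ in $H^1$ for every $j \leq M$, via the identity
$$
e^{it_n^j\Delta}\wtw_n^M(\cdot + x_n^j) \;=\; e^{it_n^j\Delta}\wtphi_n(\cdot + x_n^j) - \wtpsi^j - \sum_{\substack{k=1\\ k\neq j}}^M e^{i(t_n^j - t_n^k)\Delta}\wtpsi^k(\cdot + x_n^j - x_n^k),
$$
in which the first term converges weakly to $\wtpsi^j$ and each summand in the last sum converges weakly to $0$ by Lemma \ref{cazlem}(i).

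In every case the task is reduced to showing that $\|\psi \cdot g_n\|_{L^2_x} \to 0$ whenever $\psi \in H^1$ is fixed and $g_n \rightharpoonup 0$ in $H^1$ with a uniform $H^1$-bound. This is standard: approximate $\psi$ by $\eta \in C_c^\infty(\R^3)$ in $H^1$ (hence in $L^4$ by Sobolev); use the Rellich--Kondrachov compactness to obtain $\|\eta\, g_n\|_{L^2_x} \to 0$; and control the tail $\|(\psi - \eta)\, g_n\|_{L^2_x}$ via H\"older with the bounded $L^4$ norm of $g_n$. Claim (ii) is then handled by Cauchy--Schwarz,
$$
\bigl|\bigl\langle a_n^j \widetilde{a}_n^j\,,\, W_n^M \wtw_n^M\bigr\rangle\bigr| \;\leq\; \|a_n^j W_n^M\|_{L^2_x}\, \|\widetilde{a}_n^j \wtw_n^M\|_{L^2_x},
$$
each factor being $o_n(1)$ by the same density argument applied at the shift of the $j$-th profile. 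The only delicate point---and therefore the main obstacle---is the weak vanishing $g_n^{j,M} \rightharpoonup 0$ in $H^1$, which is the analogue, at the level of products, of the orthogonality built into the construction of the profile decomposition.
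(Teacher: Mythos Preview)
Your overall strategy---showing directly that every cross term in the decomposition of $\phi_n\wtphi_n$ vanishes in $L^2$ for fixed $M$---differs from the paper's proof, which introduces an auxiliary index $M_1>M$, exploits $\lim_{M_1\to\infty}\lim_{n\to\infty}\|W_n^{M_1}\|_{L^4}=0$ from Lemma~\ref{lemma1}(ii), and compares both $\phi_n\wtphi_n$ and $W_n^M\wtw_n^M$ to $W_n^{M_1}\wtw_n^{M_1}$ via Lemma~\ref{lemma2}. The paper's detour avoids ever having to prove profile--remainder orthogonality at the level of bilinear products, which is exactly where your argument needs care.

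There is a genuine gap in your reduction. You write that ``translating $x\mapsto x+x_n^j$ and using the unitarity of $e^{it\Delta}$'' turns $\|a_n^j\,\widetilde a_n^k\|_{L^2}$ into $\bigl\|\psi^j\cdot e^{i(t_n^j-t_n^k)\Delta}\wtpsi^k(\cdot+x_n^j-x_n^k)\bigr\|_{L^2}$, and similarly turns $\|a_n^j\,\wtw_n^M\|_{L^2}$ into $\|\psi^j\cdot g_n^{j,M}\|_{L^2}$. This is false: $e^{it\Delta}$ is unitary on $L^2$ but not multiplicative, so $\|(e^{-it\Delta}f)\,g\|_{L^2}\neq\|f\,(e^{it\Delta}g)\|_{L^2}$ in general. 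After the spatial translation you are left with $\bigl\|(e^{-it_n^j\Delta}\psi^j)\cdot\wtw_n^M(\cdot+x_n^j)\bigr\|_{L^2}$, and the first factor still depends on $n$; your ``fixed $\psi$ times weakly null $g_n$'' lemma does not apply as stated.

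The conclusions you want are nevertheless true, and your route can be repaired by the case split on $t_n^j$ already used in the proof of Lemma~\ref{lemma2}. If $|t_n^j|\to\infty$, the dispersive decay \eqref{L4} gives $\|e^{-it_n^j\Delta}\psi^j\|_{L^4}\to 0$ and H\"older finishes both the off-diagonal and the profile--remainder products. If $t_n^j\to\bar t^{\,j}$ along a subsequence, then $e^{-it_n^j\Delta}\psi^j\to e^{-i\bar t^{\,j}\Delta}\psi^j$ strongly in $L^4$, so the first factor may be replaced by a fixed function; moreover from $g_n^{j,M}\rightharpoonup 0$ and $t_n^j\to\bar t^{\,j}$ one deduces $\wtw_n^M(\cdot+x_n^j)\rightharpoonup 0$ in $H^1$, after which your Rellich--density argument applies. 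With this dichotomy in place your direct approach goes through; without it the key reduction is unjustified.
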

\begin{proof}
By using the inequality $||z_1+z_2|^2-|z_1|^2-|z_2|^2|\leq c|z_1||z_2|$, for some constant $c>0$, we deduce
\[
\begin{split}
\Big|\|\phi_n\wtphi_n&-W_n^{M_1}\wtw_n^{M_1}\|^2_{L^2_x}-\|\phi_n\wtphi_n\|^2_{L^2_x}\Big| \\
&\leq \int\left| |\phi_n\wtphi_n-W_n^{M_1}\wtw_n^{M_1}|^2- |\phi_n\wtphi_n|^2- |W_n^{M_1}\wtw_n^{M_1}|^2\right|+|W_n^{M_1}\wtw_n^{M_1}|^2\\
&\lesssim \int |\phi_n\wtphi_n||W_n^{M_1}\wtw_n^{M_1}|+\int|W_n^{M_1}\wtw_n^{M_1}|^2\\
&\lesssim \|\phi_n\wtphi_n\|_{L^2_x}\|W_n^{M_1}\wtw_n^{M_1}\|_{L^2_x}+\|W_n^{M_1}\wtw_n^{M_1}\|^2_{L^2_x}\\
&\lesssim \left(\|\phi_n\|_{L^4_x}\|\wtphi_n\|_{L^4_x}+\|W_n^{M_1}\|_{L^4_x}\|\wtw_n^{M_1}\|_{L^4_x}\right)\|W_n^{M_1}\wtw_n^{M_1}\|_{L^2_x}.
\end{split}
\]
Lemma \ref{lemma1} (ii) implies that the sequences $\{W_n^{M_1}\}$  and $\{\wtw_n^{M_1}\}$ are uniformly bounded in $L^4$. Also, H\"older's inequality combined with Lemma \ref{lemma1} (ii) yield
$$\lim_{M_1\to\infty}\left( \lim_{n\to\infty}\|W_n^{M_1}\wtw_n^{M_1}\|_{L^2_x} \right)=0.
 $$
Thus given any $\varepsilon>0$, we choose $M_1$ and $n$ sufficiently large so that
\begin{equation}\label{pr18}
\Big|\|\phi_n\wtphi_n-W_n^{M_1}\wtw_n^{M_1}\|^2_{L^2_x}-\|\phi_n\wtphi_n\|^2_{L^2_x}\Big|<\varepsilon.
\end{equation}
In a similar fashion, for $M_1$ and $n$ large enough,
\begin{equation}\label{pr19}
\Big|\|W_n^{M}\wtw_n^{M}-W_n^{M_1}\wtw_n^{M_1}\|^2_{L^2_x}-\|W_n^{M}\wtw_n^{1}\|^2_{L^2_x}\Big|<\varepsilon.
\end{equation}
Now note that by definition,
\[
\begin{split}
W_n^{M}\wtw_n^{M}&-W_n^{M_1}\wtw_n^{M_1}=\sum_{j=M+1}^{M_1}e^{-it_n^j\Delta}\psi^j(\cdot-x_n^j) e^{-it_n^j\Delta}\wtpsi^j(\cdot-x_n^j)\\
&+\sum_{j,k=M+1, k\neq j}^{M_1}e^{-it_n^j\Delta}\psi^j(\cdot-x_n^j) e^{-it_n^j\Delta}\wtpsi^j(\cdot-x_n^j)+W_n^{M_1}\sum_{j=M+1}^{M_1}e^{-it_n^j\Delta}\wtpsi^j(\cdot-x_n^j)\\
&+\wtw_n^{M_1}\sum_{j=M+1}^{M_1}e^{-it_n^j\Delta}\psi^j(\cdot-x_n^j) -\sum_{j=M+1}^{M_1}e^{-it_n^j\Delta}\psi^j(\cdot-x_n^j)\sum_{j=1}^{M}e^{-it_n^j\Delta}\wtpsi^j(\cdot-x_n^j)\\
&-\sum_{j=M+1}^{M_1}e^{-it_n^j\Delta}\wtpsi^j(\cdot-x_n^j)\sum_{j=1}^{M}e^{-it_n^j\Delta}\psi^j(\cdot-x_n^j).
\end{split}
\]
Therefore, as in the proof of Lemma \ref{lemma2}, we obtain
\begin{equation}\label{pr20}
W_n^{M}\wtw_n^{M}-W_n^{M_1}\wtw_n^{M_1}=\sum_{j=M+1}^{M_1}e^{-it_n^j\Delta}\psi^j(\cdot-x_n^j) e^{-it_n^j\Delta}\wtpsi^j(\cdot-x_n^j)+o_{n,M_1}(1).
\end{equation}
Consequently, Lemma \ref{lemma2} yields, for $M_1$ and $n$ large enough,
\begin{equation}\label{pr21}
\Big|\|W_n^{M}\wtw_n^{M}-W_n^{M_1}\wtw_n^{M_1}\|^2_{L^2_x}-\sum_{j=M+1}^{M_1}\|e^{-it_n^j\Delta}\psi^j e^{-it_n^j\Delta}\wtpsi^j\|^2_{L^2_x}\Big|<\varepsilon.
\end{equation}
As in \eqref{pr20}, it is not difficult to see that
$$
\phi_n^{M}\wtphi_n^{M}-W_n^{M_1}\wtw_n^{M_1}=\sum_{j=1}^{M_1}e^{-it_n^j\Delta}\psi^j(\cdot-x_n^j) e^{-it_n^j\Delta}\wtpsi^j(\cdot-x_n^j)+o_{n,M_1}(1).
$$
Thus, another application of Lemma \ref{lemma2}, gives
\begin{equation}\label{pr22}
\Big|\|\phi_n^{M}\wtphi_n^{M}-W_n^{M_1}\wtw_n^{M_1}\|^2_{L^2_x}-\sum_{j=1}^{M_1}\|e^{-it_n^j\Delta}\psi^j e^{-it_n^j\Delta}\wtpsi^j\|^2_{L^2_x}\Big|<\varepsilon.
\end{equation}
By combining \eqref{pr18}, \eqref{pr19}, \eqref{pr21}, and \eqref{pr22} we deduce \eqref{pr17}.
\end{proof}

We finish this section collecting all the above estimates to obtain the following energy decomposition.

\begin{proposition}[Energy Decomposition]
In the same situation of Theorem \ref{profdec}, for each $M\geq1$,
\begin{equation}\label{ED}
E(\phi_n,\wtphi_n)=\sum_{j=1}^ME(e^{-it_n^j\Delta}\psi^j(\cdot-x_n^j), e^{-it_n^j\Delta}\wtpsi^j(\cdot-x_n^j))+E(W_n^M,\wtw_n^M)+o_n(1).
\end{equation}
\end{proposition}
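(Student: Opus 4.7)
The strategy is to decompose each of the three pieces that make up $E(\phi_n,\wtphi_n)$ separately, using the results already at our disposal, and then reassemble them into the stated identity. Recalling the definition
\[
E(\phi_n,\wtphi_n)=\frac{1}{2}\bigl(\|\phi_n\|_{\dot H^1}^2+\|\wtphi_n\|_{\dot H^1}^2\bigr)-\frac{1}{4}\bigl(\|\phi_n\|_{L^4}^4+2\beta\|\phi_n\wtphi_n\|_{L^2}^2+\|\wtphi_n\|_{L^4}^4\bigr),
\]
the three ingredients to feed in are: (a) the $\dot H^1$ orthogonality from Theorem \ref{profdec} (identities \eqref{pr3}, \eqref{pr4} with $s=1$); (b) the $L^4$ decomposition from Lemma \ref{lemma1}(iii) applied to both $\phi_n$ and $\wtphi_n$; and (c) the coupling decomposition of $\|\phi_n\wtphi_n\|_{L^2}^2$ from Lemma \ref{lemma3}.

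First, apply \eqref{pr3} and \eqref{pr4} at $s=1$ and use the fact that $e^{-it_n^j\Delta}$ and translation by $x_n^j$ are isometries on $\dot H^1$, so that
\[
\|\psi^j\|_{\dot H^1}^2=\|e^{-it_n^j\Delta}\psi^j(\cdot-x_n^j)\|_{\dot H^1}^2,
\]
and likewise for $\wtpsi^j$. This yields
\[
\frac{1}{2}\bigl(\|\phi_n\|_{\dot H^1}^2+\|\wtphi_n\|_{\dot H^1}^2\bigr)=\frac{1}{2}\sum_{j=1}^{M}\bigl(\|e^{-it_n^j\Delta}\psi^j(\cdot-x_n^j)\|_{\dot H^1}^2+\|e^{-it_n^j\Delta}\wtpsi^j(\cdot-x_n^j)\|_{\dot H^1}^2\bigr)+\frac{1}{2}\bigl(\|W_n^M\|_{\dot H^1}^2+\|\wtw_n^M\|_{\dot H^1}^2\bigr)+o_n(1).
\]
Second, invoking Lemma \ref{lemma1}(iii) for $\phi_n$ and then for $\wtphi_n$, and using translation invariance of the $L^4$ norm, we obtain
\[
\|\phi_n\|_{L^4}^4+\|\wtphi_n\|_{L^4}^4=\sum_{j=1}^{M}\bigl(\|e^{-it_n^j\Delta}\psi^j(\cdot-x_n^j)\|_{L^4}^4+\|e^{-it_n^j\Delta}\wtpsi^j(\cdot-x_n^j)\|_{L^4}^4\bigr)+\|W_n^M\|_{L^4}^4+\|\wtw_n^M\|_{L^4}^4+o_n(1).
\]
Third, Lemma \ref{lemma3}, after translating by $x_n^j$, gives exactly
\[
\|\phi_n\wtphi_n\|_{L^2}^2=\sum_{j=1}^{M}\|e^{-it_n^j\Delta}\psi^j(\cdot-x_n^j)\,e^{-it_n^j\Delta}\wtpsi^j(\cdot-x_n^j)\|_{L^2}^2+\|W_n^M\wtw_n^M\|_{L^2}^2+o_n(1).
\]

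Plugging these three decompositions into the definition of $E$ and grouping the $j$-th profile pieces together reconstructs $E(e^{-it_n^j\Delta}\psi^j(\cdot-x_n^j),e^{-it_n^j\Delta}\wtpsi^j(\cdot-x_n^j))$ term by term, and the remainder pieces similarly assemble into $E(W_n^M,\wtw_n^M)$. This yields \eqref{ED}. The only genuinely nontrivial input is the coupling term in step (c), which is precisely the content of Lemma \ref{lemma3}; the other two steps are direct consequences of previously stated results together with the isometry properties of $e^{it\Delta}$ and spatial translations.
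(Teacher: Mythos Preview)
Your proof is correct and follows essentially the same approach as the paper, which simply says to combine Lemma~\ref{lemma1}(iii) with Lemma~\ref{lemma3} (the paper's reference to ``Lemma~\ref{lemma2}~(iii)'' is evidently a typo). You have merely spelled out in detail what the paper leaves implicit, namely that the kinetic part comes from the Pythagorean expansions \eqref{pr3}--\eqref{pr4} at $s=1$, the quartic terms from Lemma~\ref{lemma1}(iii), and the cross term from Lemma~\ref{lemma3}.
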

\begin{proof}
Combine Lemma \ref{lemma2} (iii) with Lemma \ref{lemma3}.
\end{proof}

\section{Construction of a critical solution}\label{ConstCri}

In this section we will construct a critical solution. We follow the exposition in Holmer-Roudenko \cite{hr} (see also  Duyckaerts-Holmer-Roudenko \cite{dhr} and Fang-Xie-Cazenave \cite{fxc}), which was based in the ideas introduced by Kenig-Merle \cite{km}. We start with the following definition.
\begin{definition}
Suppose $(u_0, v_0)\in H^1\times H^1$ and let $(u,v)$ be the corresponding $H^1$-solution to the three-dimensional cubic nonlinear Schr\"odinger system \eqref{3nls}. Let $[0,T^{\ast})$ be the maximal forward interval of existence. We shall say that SC($u_0, v_0$) holds if 
$$
T^{\ast}=\infty \quad \textrm{and} \quad \|(u,v)\|_{S(\Hmeio)\times S(\Hmeio)}<\infty.
$$
\end{definition}

Our goal is to show that SC($u_0, v_0$) holds for any $(u_0, v_0)\in \mathcal{K}$. We already know from Theorem \ref{globalPastor} that, for any $(u_0, v_0)\in \mathcal{K}$,  $T^{\ast}=\infty$ and the solution $(u,v)$ is uniformly bounded in $H^1\times H^1$ (see Remark \ref{Rema}). Moreover, as we already observed, if $\|u\|_{S(\Hmeio)}+\|v\|_{S(\Hmeio)}<\infty$ then Proposition \ref{h1scat} implies that $(u,v)$ scatters. Let
\begin{equation*}
\mathcal{L}:=\left\{(u_0,v_0)\in \mathcal{K}: \|(u,v)\|_{S(\Hmeio)\times S(\Hmeio)}<\infty \right\}.
\end{equation*}
With these remarks, we want to show that 
\begin{equation*}
\mathcal{L}= \mathcal{K}.
\end{equation*}

Let $M_c$ be the number defined as the supremum over all $\delta$ for which the following statement holds:\\

\textit{``If $(u_0, v_0)\in H^1\times  H^1$ satisfies \eqref{EM1}  and $M(u_0,v_0)E(u_0,v_0)<\delta$ then SC($u_0, v_0$) holds.''}\\

Note that there always exists a $\delta>0$ such that the above statement is true. Indeed, if $M(u_0,v_0)E(u_0,v_0)<\delta$, by interpolation and Lemma \ref{grolemma} (i), we have 
\begin{equation*}
\begin{split}
\|u_0\|^4_{\dot{H}^{1/2}}+\|v_0\|^4_{\dot{H}^{1/2}}&\leq \|\nabla u_0\|_{L^2}^{2}\|u_0\|_{L^2}^{2} + \|\nabla v_0\|_{L^2}^{2}\|v_0\|_{L^2}^{2} \\
&< M(u_0,v_0)A(u_0,v_0)\\
&\leq 6 M(u_0,v_0)E(u_0,v_0)\\
&< 6 \delta.
\end{split}
\end{equation*}
Lemma \ref{strilem} (i) now implies that $\|(e^{it\Delta}u_0, e^{it\Delta}v_0)\|_{S(\Hmeio)\times S(\Hmeio)}$ is sufficiently small.
Therefore, the small data global theory (Theorem \ref{teocri}) asserts that for a small enough $\delta>0$ we have a global solution that scatters.

Our purpose here is then to show that $M_c\geq M(P,Q)E(P,Q)$. Assume, by contradiction, that $M_c < M(P,Q)E(P,Q)$. Therefore, there exists a sequence of solutions $(u_n,v_n)$ of system \eqref{3nls} with corresponding initial data $(u_{n,0}, v_{n,0})$ such that $(u_{n,0}, v_{n,0})\in \mathcal{K}$ and
$$
M(u_n,v_n)E(u_n,v_n)  \searrow M_c, \quad \textrm{as} \quad n\rightarrow \infty,
$$
for which SC($u_{n,0}, v_{n,0}$) does not hold for any $n\in \mathbb{N}$. As a consequence, we have 
\begin{equation}\label{SCunvn}
\|(u_n,v_n)\|_{S(\Hmeio)\times S(\Hmeio)}=\infty, \quad \textrm{for any} \quad n\in \mathbb{N}.
\end{equation}

Our main goal in this section is to apply the profile decomposition result stated in Theorem \ref{profdec} to construct a {\textit{critical solution} that does not scatters. Indeed, by the scaling symmetry \eqref{scalsym} (see also \eqref{scalL2}) we can re-scale the sequence described before such that $M(u_{n,0},v_{n,0})=1$, for any $n\in \mathbb{N}$. In this case we have 
\begin{equation*}
A(u_{n,0},v_{n,0})<M(P,Q)A(P,Q)
\end{equation*}
and
\begin{equation}\label{UN1}
E(u_n,v_n)  \searrow M_c, \quad \textrm{as} \quad n\rightarrow \infty.
\end{equation}

In view of the assumption $M_c < M(P,Q)E(P,Q)$, without loss of generality, there exists $\overline{\delta}\in (0,1)$ such that, for every $n\in \N$,
$$
E(u_n,v_n) \leq \overline{\delta}M(P,Q)E(P,Q).
$$
Since $(u_{n,0}, v_{n,0})\in \mathcal{K}$, relation \eqref{EM1} is satisfied, for any $n\in \N$, which implies by  Lemma \ref{grolemma} (ii) that
\begin{equation}\label{UN0}
A(u_{n,0},v_{n,0})\leq \overline{\delta} M(P,Q)A(P,Q).
\end{equation}
Therefore, $\{u_{n,0}\}_{n\in \mathbb{N}}$ and $\{v_{n,0}\}_{n\in \mathbb{N}}$ are two uniformly bounded sequences in $H^1$ and we can apply the profile expansion (Theorem \ref{profdec}) to obtain
\begin{equation}\label{(un0vn0)}
(u_{n,0}, v_{n,0})=\sum_{j=1}^M(e^{-it_n^j\Delta}\psi^j(\cdot-x_n^j),e^{-it_n^j\Delta}\wtpsi^j(\cdot-x_n^j))+(W_n^M, \wtw_n^M).
\end{equation}

By the Pythagorean asymptotic expansions \eqref{pr3} and \eqref{pr4}, we have for all $M\in \mathbb{N}$,
\begin{equation}\label{Psi}
\sum_{j=1}^{M} M(\psi^j, \wtpsi^j) \leq \lim_{n\rightarrow \infty}M(u_{n,0},v_{n,0})=1.
\end{equation}
Moreover, another application of \eqref{pr3} and \eqref{pr4}, with $s=1$, and \eqref{UN0}, yield
\begin{equation}\label{Apsi}
\sum_{j=1}^{M} A(\psi^j, \wtpsi^j) \leq \limsup_{n\rightarrow \infty}A(u_{n,0},v_{n,0})\leq  \overline{\delta} M(P,Q)A(P,Q).
\end{equation}
Therefore, for all $j,n\in \N$, we deduce from \eqref{Psi} and \eqref{Apsi},
$$
M\left(e^{-it_n^j\Delta}\psi^j(\cdot-x_n^j), e^{-it_n^j\Delta}\wtpsi^j(\cdot-x_n^j)\right) A\left(e^{-it_n^j\Delta}\psi^j(\cdot-x_n^j), e^{-it_n^j\Delta}\wtpsi^j(\cdot-x_n^j)\right)
$$
\begin{equation*}
=M\left(\psi^j, \wtpsi^j\right) A\left(\psi^j, \wtpsi^j\right) \leq  \overline{\delta} M(P,Q)A(P,Q).
\end{equation*}
From Lemma \ref{grolemma} (i), we obtain
$$
E(e^{-it_n^j\Delta}\psi^j(\cdot-x_n^j), e^{-it_n^j\Delta}\wtpsi^j(\cdot-x_n^j)) \geq \frac{1}{6} A(\psi^j, \wtpsi^j) \geq 0.
$$
A completely similar analysis yields, for all $M\in \mathbb{N}$, 
$$\limsup_{n\rightarrow \infty}M(W_n^M, \wtw_n^M)\leq 1,$$ 
$$\limsup_{n\rightarrow \infty}A(W_n^M, \wtw_n^M) \leq  \overline{\delta} M(P,Q)A(P,Q),$$ 
and, for $n$ sufficiently large (depending on $M$),
$$E(W_n^M, \wtw_n^M)\geq 0.$$

The energy decomposition \eqref{ED} and \eqref{UN1} allow us to conclude 
$$
\sum_{j=1}^M\limsup_{n\rightarrow \infty} E(e^{-it_n^j\Delta}\psi^j(\cdot-x_n^j), e^{-it_n^j\Delta}\wtpsi^j(\cdot-x_n^j))+\limsup_{n\rightarrow \infty} E(W_n^M,\wtw_n^M)
$$
$$
=E(u_n,v_n)  \searrow M_c
$$
and, since the energy of all terms involved is positive, we have for all $j\geq 1$,
\begin{equation}\label{EnergyPsi}
\limsup_{n\rightarrow \infty} E(e^{-it_n^j\Delta}\psi^j(\cdot-x_n^j), e^{-it_n^j\Delta}\wtpsi^j(\cdot-x_n^j)) \leq M_c.
\end{equation}

The next lemma is the heart of the analysis. It asserts that the sequence 
$$
\{(\psi^j, \wtpsi^j)\}_{j\in \N}\subset H^1 \times H^1
$$ 
obtained in the above discussion has at most one nonzero element.

\begin{lemma}\label{Uniq}
There exists at most one $j\in \N$ such that $(\psi^j, \wtpsi^j)\neq (0,0)$.
\end{lemma}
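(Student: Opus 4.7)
The plan is to argue by contradiction: assume there are at least two indices for which the profile is non-trivial, construct a nonlinear profile associated to each linear profile, show each nonlinear profile has $\|(U^j,V^j)\|_{S(\dot{H}^{1/2})\times S(\dot{H}^{1/2})}<\infty$ by the minimality of $M_c$, and then splice them together through Proposition \ref{LTstabteo} to contradict \eqref{SCunvn}.

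Suppose, towards a contradiction, that at least two of the profiles, say $(\psi^{1},\wtpsi^{1})$ and $(\psi^{2},\wtpsi^{2})$, are non-zero; by $L^2$-Pythagorean \eqref{pr3}, each $M(\psi^j,\wtpsi^j)$ is strictly positive and the totals satisfy $\sum_j M(\psi^j,\wtpsi^j)\leq 1$, hence $M(\psi^j,\wtpsi^j)<1$ strictly for every nontrivial profile. Combined with \eqref{EnergyPsi} this yields, for each such $j$,
\begin{equation*}
M(\psi^j,\wtpsi^j)\,\limsup_{n\to\infty} E\bigl(e^{-it_n^j\Delta}\psi^j(\cdot-x_n^j),\,e^{-it_n^j\Delta}\wtpsi^j(\cdot-x_n^j)\bigr)\;<\;1\cdot M_c\;=\;M_c.
\end{equation*}
Moreover \eqref{Apsi} gives $M(\psi^j,\wtpsi^j)A(\psi^j,\wtpsi^j)\leq\overline{\delta}M(P,Q)A(P,Q)$, so the hypotheses of Theorem \ref{globalPastor} are satisfied for $(\psi^j,\wtpsi^j)$.

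Next, for each non-trivial profile we attach a nonlinear profile $(U^j,V^j)\in C(\R;H^1\times H^1)$. If, up to subsequence, $t_n^j\to \overline t^{\,j}\in\R$, we take $(U^j,V^j)$ to be the solution of \eqref{3nls} with data $e^{i\overline t^{\,j}\Delta}(\psi^j,\wtpsi^j)$ evaluated at time $-\overline t^{\,j}$; if $|t_n^j|\to\infty$ we apply the wave-operator result (Theorem \ref{waveop}, whose hypothesis \eqref{phipsi+} holds after choosing $\overline{\delta}$ small enough, using \eqref{Apsi} together with the identity $M(P,Q)A(P,Q)=6M(P,Q)E(P,Q)$). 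In either case $M(U^j,V^j)=M(\psi^j,\wtpsi^j)$ and the large-time limit of $E$ coincides with $E(U^j,V^j)$, so by the strict inequality above and the very definition of $M_c$, SC$(U^j(0),V^j(0))$ holds; in particular each nonlinear profile is globally defined and scatters, i.e.\ $\|(U^j,V^j)\|_{S(\Hmeio)\times S(\Hmeio)}<\infty$.

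Form the approximating sequence
\begin{equation*}
(\widetilde u_n^M,\widetilde v_n^M)(t,x)\;=\;\sum_{j=1}^{M} \bigl(U^j(t-t_n^j,\,x-x_n^j),\,V^j(t-t_n^j,\,x-x_n^j)\bigr)\;+\;\bigl(e^{it\Delta}W_n^M,\,e^{it\Delta}\wtw_n^M\bigr),
\end{equation*}
which satisfies \eqref{3nlstil} for suitable $(e_1,e_2)$ coming from the cross interactions among distinct profiles and the residual terms, and whose data match $(u_{n,0},v_{n,0})$ by \eqref{(un0vn0)}. By \eqref{pr2}, \eqref{st2} is fulfilled for $n$ large, and by the scattering of each $(U^j,V^j)$ the global $S(\Hmeio)$-bound in \eqref{st1} holds uniformly in $n$. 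It then remains to verify $\|(e_1,e_2)\|_{S'(\dot H^{-1/2})\times S'(\dot H^{-1/2})}\leq \varepsilon$; using the pseudo-orthogonality \eqref{pr1} of the shifts $(t_n^j,x_n^j)$ one shows that every genuinely mixed term (of the types $|U^j|^2 U^k$, $V^j\bar V^k U^\ell$, and so on, with not all indices equal) tends to $0$ in $S'(\dot H^{-1/2})$ as $n\to\infty$, in the spirit of Lemmas \ref{lemma2}--\ref{lemma3}, while the remainder contributions $e^{it\Delta}W_n^M,e^{it\Delta}\wtw_n^M$ are handled by \eqref{pr2}. Proposition \ref{LTstabteo} then delivers $\|(u_n,v_n)\|_{S(\Hmeio)\times S(\Hmeio)}<\infty$ for $n$ large, contradicting \eqref{SCunvn}.

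The main obstacle is precisely the verification of the $S'(\dot H^{-1/2})$ smallness of $(e_1,e_2)$: because the system couples $u$ and $v$ through the extra terms $\beta|v|^{2}u$ and $\beta|u|^{2}v$, one must control simultaneously cubic cross-interactions involving both families of shifted profiles $(U^j,V^j)$ and $(U^k,V^k)$ with $j\neq k$, showing that all such mixed trilinear terms vanish asymptotically thanks to \eqref{pr1}; this is where the fact that the Profile Decomposition in Theorem \ref{profdec} uses a \emph{common} pair $(t_n^j,x_n^j)$ for both components is essential, and where the generalizations of Lemmas \ref{lemma1}--\ref{lemma3} to triple products do the work.
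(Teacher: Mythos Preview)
Your overall strategy matches the paper's: assume two nontrivial profiles, attach to each a nonlinear profile that scatters by the minimality of $M_c$, and use long-time perturbation to contradict \eqref{SCunvn}. There is, however, one step that fails as written. Your justification of the wave-operator hypothesis \eqref{phipsi+} ``after choosing $\overline\delta$ small enough'' is not available: $\overline\delta$ is not a free parameter but is constrained by the requirement $E(u_n,v_n)\le\overline\delta\,M(P,Q)E(P,Q)$, which forces $\overline\delta\ge M_c/[M(P,Q)E(P,Q)]$; there is no reason this ratio should be below $1/3$, which is what your argument via \eqref{Apsi} and $M(P,Q)A(P,Q)=6M(P,Q)E(P,Q)$ actually needs. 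The paper instead observes that when $|t_n^j|\to\infty$ the dispersive estimate (Lemma \ref{Decay}) gives $\|e^{-it_n^j\Delta}\psi^j\|_{L^4}+\|e^{-it_n^j\Delta}\wtpsi^j\|_{L^4}\to0$, so the potential part of the energy vanishes and \eqref{EnergyPsi} yields $\tfrac12 A(\psi^j,\wtpsi^j)\le M_c$; together with $M(\psi^j,\wtpsi^j)\le1$ this gives \eqref{phipsi+} directly.

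A second point you pass over too quickly is the bound \eqref{st1} for the approximate solution: saying ``by the scattering of each $(U^j,V^j)$ the global $S(\Hmeio)$-bound holds uniformly in $n$'' is not enough, because you need the bound uniform in $M$ as well, and summing infinitely many finite $S(\Hmeio)$-norms need not stay bounded. The paper's Claim~2 handles this by splitting the sum at a fixed $M_1$: the finitely many profiles with $j<M_1$ contribute a fixed constant, while for $j\ge M_1$ the $H^1$-norms are small enough (by \eqref{pr3}--\eqref{pr4}) that the small-data theory (Corollary \ref{teocri2} and Theorem \ref{teocri}) applies and the orthogonality \eqref{pr1} lets you sum the squares. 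This is where most of the technical work in the paper's proof goes, and your sketch should at least flag it.
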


\begin{proof}
Assume that $(\psi^j, \wtpsi^j)\neq (0,0)$ for  $j\in J$, where $J\subset \N$ has more than one element. In view of \eqref{Psi}, for any $j\in J$,
\begin{equation}\label{psij}
M(\psi^j, \wtpsi^j)< 1.
\end{equation}
Therefore, by \eqref{Apsi} and \eqref{EnergyPsi}, we have 
\begin{equation}\label{MA}
M(\psi^j, \wtpsi^j)A(\psi^j, \wtpsi^j)<  \overline{\delta} M(P,Q)A(P,Q),
\end{equation}
and
\begin{equation}\label{ME}
M(\psi^j, \wtpsi^j)E(e^{-it_n^j\Delta}\psi^j(\cdot-x_n^j), e^{-it_n^j\Delta}\wtpsi^j(\cdot-x_n^j))  < M_c,
\end{equation}
for any $j\in J$ and $n$ sufficiently large.

Next, we consider the sequence $\{t^j_n\}_{n\in \mathbb{N}}$, for a given $j\in J$. If this sequence is bounded, passing to a subsequence if necessary, we can assume $t^j_n\rightarrow \bar{t}^j \in \R$. Define 
$$
(\eta^j,\widetilde{\eta}^j)=\textrm{NLSs}(\bar{t}^j)\left(e^{-i\bar{t}^j\Delta}\psi^j, e^{-i\bar{t}^j\Delta}\wtpsi^j\right),
$$
where $\{\textrm{NLSs}(t)\}_{t\in \R}$ denotes the flow of the nonlinear system \eqref{3nls}. Since $\mathcal{K}$ is a closed subset of $H^1\times H^1$, by \eqref{MA} and \eqref{ME}, we have $(e^{-i\bar{t}^j\Delta}\psi^j, e^{-i\bar{t}^j\Delta}\wtpsi^j) \in \mathcal{K}$. Therefore $\textrm{NLSs}(t)\left(e^{-i\bar{t}^j\Delta}\psi^j, e^{-i\bar{t}^j\Delta}\wtpsi^j\right)$ is a global solution and $(\eta^j,\widetilde{\eta}^j)$ satisfies conditions \eqref{EM} and \eqref{EM1} in view of Theorem \ref{globalPastor}. By the continuity of the linear and nonlinear flows, we also deduce
\begin{equation}\label{I1}
\|\textrm{NLSs}(-t^j_n)(\eta^j,\widetilde{\eta}^j)-(e^{-i{t^j_n}\Delta}\psi^j, e^{-it^j_n\Delta}\wtpsi^j)\|_{H^1\times H^1}  \rightarrow 0, \quad \textrm{as} \quad n\rightarrow \infty.
\end{equation}

On the other hand, if $\{t^j_n\}_{n\in \mathbb{N}}$ is not bounded, we may assume $|t^j_n|\rightarrow \infty$. Passing to a subsequence if necessary, we have $t^j_n\rightarrow +\infty$ or $t^j_n\rightarrow -\infty$. We first claim that 
\begin{equation}\label{L4}
\lim_{n\rightarrow \infty}\Big\{\|e^{-i{t^j_n}\Delta}\psi^j\|_{L^{4}_x}+\|e^{-it^j_n\Delta}\wtpsi^j\|_{L^{4}_x}\Big\}=0.
\end{equation}
Indeed, for $\phi \in L^{4/3}_x\cap \dot{H}^{3/4}$ we have by Sobolev embedding and Lemma \ref{Decay},
\begin{equation*}
\begin{split}
\|e^{-i{t^j_n}\Delta}\psi^j\|_{L^{4}_x}&=\|e^{-i{t^j_n}\Delta}(\psi^j-\phi)\|_{L^{4}_x}+\|e^{-i{t^j_n}\Delta}\phi^j\|_{L^{4}_x}\\
&\leq c \|\psi^j-\phi\|_{\dot{H}^{3/4}} + c|t^j_n|^{-3/4}\|\phi\|_{L^{4/3}_x}.
\end{split}
\end{equation*}
The same discussion also holds for $\wtpsi^j$ and thus a density argument shows the desired limit and establishes the claim.

In view of \eqref{L4}, the definition of the energy \eqref{econ}, H\"older's inequality and \eqref{EnergyPsi}, we deduce
$$\frac{1}{2}A(\psi^j, \wtpsi^j)=\limsup_{n\rightarrow \infty} E(e^{-it^j_n\Delta}\psi^j(\cdot-x^j_n), e^{-it^j_n\Delta}\wtpsi^j(\cdot-x^j_n))  \leq M_c.$$
Therefore, by the existence of wave operator (Theorem \ref{waveop}), there exists  $(\eta^j,\widetilde{\eta}^j)\in H^1\times H^1$ such that \eqref{I1} also holds. Moreover, by \eqref{waveop1}, \eqref{waveop2} and \eqref{psij}
\begin{equation}\label{I2}
M(\eta^j,\widetilde{\eta}^j)=M(\psi^j, \wtpsi^j)<1,
\end{equation}
\begin{equation}\label{I3}
M(\eta^j,\widetilde{\eta}^j)A(\eta^j,\widetilde{\eta}^j)<M(P,Q)A(P,Q),
\end{equation}
\begin{equation}\label{I4}
E(\eta^j,\widetilde{\eta}^j)=\frac{1}{2}A(\psi^j, \wtpsi^j)\leq M_c.
\end{equation}
Note that here, the constructed pair $(\eta^j,\widetilde{\eta}^j)$ also satisfies conditions \eqref{EM} and \eqref{EM1} in view of \eqref{I2}-\eqref{I4}. 

Next we observe that \eqref{I1} and \eqref{I4} imply
\begin{equation*}
M(\eta^j,\widetilde{\eta}^j) E(\eta^j,\widetilde{\eta}^j)< M_c,
\end{equation*}
and thus, for any $j\in J$, the definition of $M_c$ and \eqref{I3} yield
\begin{equation}\label{NLSs}
\|\textrm{NLSs}(t)(\eta^j,\widetilde{\eta}^j))\|_{S(\Hmeio)\times S(\Hmeio)}<\infty
\end{equation}
and
\begin{equation}\label{NLSs2}
\sup_{t\in \R}\|\textrm{NLSs}(t)(\eta^j,\widetilde{\eta}^j))\|_{H^1\times H^1}<\infty,
\end{equation}
where in the last inequality we have used Remark \ref{Rema}.

Now, setting 
\begin{equation}\label{Decomp}
(u_{n,0}, v_{n,0})=\sum_{j=1}^M\textrm{NLSs}(-t^j_n)(\eta^j,\widetilde{\eta}^j)(\cdot-x_n^j)+(K^M_n, \widetilde{K}^M_n),
\end{equation}
we have by \eqref{(un0vn0)},
\begin{equation*}
\begin{split}
(K^M_n, \widetilde{K}^M_n)=&\sum_{j=1}^M(e^{-i{t^j_n}\Delta}\psi^j(\cdot-x^j_n), e^{-it^j_n\Delta}\wtpsi^j(\cdot-x^j_n))-\textrm{NLSs}(-t^j_n)(\eta^j,\widetilde{\eta}^j)(\cdot-x^j_n)\\
&+(W^M_n, \wtw_n^M)
\end{split}
\end{equation*}
which implies, in view of \eqref{I1} and \eqref{pr2},
\begin{equation}\label{Kmn1}
\lim_{M\to\infty}\Big[\lim_{n\to\infty}\|(e^{it\Delta}K_n^M, e^{it\Delta}\widetilde{K}_n^M)\|_{S(\Hmeio)\times S(\Hmeio)}\Big]=0.
\end{equation}

Let
$$
(u_n(t), v_{n}(t))=\textrm{NLSs}(t)(u_{n,0}, v_{n,0}).
$$
Our aim is to show that 
$$
(u_n(t), v_{n}(t)) \approx \sum_{j=1}^M\textrm{NLSs}(t-t^j_n)(\eta^j,\widetilde{\eta}^j)(\cdot-x_n^j)
$$
in the spirit of the long time perturbation theory (Proposition \ref{LTstabteo}). To this end we define
\begin{equation}\label{wnjznj}
(w_n^j(t), z_n^j(t))=\textrm{NLSs}(t-t^j_n)(\eta^j,\widetilde{\eta}^j)(\cdot-x_n^j)
\end{equation}
and
\begin{equation}\label{uMnvMn}
(u^M_n(t), v^M_{n}(t))=\sum_{j=1}^M(w_n^j(t), z_n^j(t)).
\end{equation}
We can easily check that $(u^M_n, v^M_{n})$ satisfies the following system
\begin{equation}\label{3nlstil2}
\begin{cases}
i\partial_t{u^M_n}+\Delta {u^M_n}+(|{u^M_n}|^2+\beta |{v^M_n}|^2){u^M_n}=e^M_{n,1},\\
i\partial_t{v^M_n}+\Delta {v^M_n}+(|{v^M_n}|^2+\beta |{u^M_n}|^2){v^M_n}=e^M_{n,2},
\end{cases}
\end{equation}
where 
$$
e^M_{n,1}=(|{u^M_n}|^2+\beta |{v^M_n}|^2){u^M_n}-\sum_{j=1}^M(|{w^j_n}|^2+\beta |{z^j_n}|^2){w^j_n}
$$
and 
$$
e^M_{n,2}=(|{v^M_n}|^2+\beta |{u^M_n}|^2){v^M_n}-\sum_{j=1}^M(|{z^j_n}|^2+\beta |{w^j_n}|^2){z^j_n}.
$$
In view of \eqref{Decomp}, we first note that $(u_n(0), v_{n}(0))-(u^M_n(0), v^M_{n}(0))=(K^M_n, \widetilde{K}^M_n)$, therefore, by \eqref{Kmn1}, given $\varepsilon>0$ there exist $n_0(\varepsilon), M_0(\varepsilon)\in \N$ sufficiently large such that
\begin{equation}\label{CLAIM1}
\|(e^{it\Delta}K_n^{M_0}, e^{it\Delta}\widetilde{K}_n^{M_0})\|_{S(\Hmeio)\times S(\Hmeio)}<\varepsilon,
\end{equation}
for all $n\geq n_0(\varepsilon)$.\\

\noindent {\bf Claim 1.} For a fixed $M\in \N$ there exists $n_1(M)\in\N$ such that, for $n\geq n_1(M)$,
\begin{equation}\label{CLAIM3}
\|(e^{M}_{n,1}, e^{M}_{n,2})\|_{S'(\dot{H}^{-1/2})\times S'(\dot{H}^{-1/2})}\leq \varepsilon(A),
\end{equation}
where $\varepsilon(A)>0$ is given by the long time perturbation theory (Proposition \ref{LTstabteo}).\\
Indeed, by the elementary inequality
$$
\Big||\sum_{j=1}^Ma_j|^2\sum_{j=1}^Mb_j-\sum_{j=1}^M|a_j|^2b_j\Big|\leq C_M\sum_{
( j,k,l)\in L} |c_j||c_k||c_l|,
$$
where $c_i=a_i \textrm{ or } b_i$ and $L=\{(j,k,l)\in\N^3: 1\leq j,l,k\leq M,\; j\neq l \textrm{ or } j\neq k \textrm{ or } l\neq k\}$, we deduce the following estimate
$$
\|(e^{M}_{n,1}, e^{M}_{n,2})\|_{S'(\dot{H}^{-1/2})\times S'(\dot{H}^{-1/2})}\leq cC_M \sum_{
( j,l,k)\in L}\||f_n^j||f_n^l||f_n^k|\|_{S'(\dot{H}^{-1/2})},
$$
where $f_n^i=w^i_n \textrm{ or } z^i_n$ (recall definition \eqref{wnjznj}). Note that $f_n^i\in S(\Hmeio)$ by \eqref{NLSs} and therefore the right hand side of the last inequality is finite. Furthermore, assume, without loss of generality, that $j\neq k$. By a density argument and \eqref{pr1} it is easy to see that 
$$
\||f_n^j||f_n^k||f_n^l|\|_{S'(\dot{H}^{-1/2})}\rightarrow 0, \quad \textrm{as} \quad n\rightarrow \infty,
$$
which implies \eqref{CLAIM3}.\\

\noindent {\bf Claim 2.} There exists $A>0$ such that, for any $M\in \N$, there exists $n_2(M)\in \N$ such that
\begin{equation}\label{CLAIM2}
\|(u^M_n, v^M_{n})\|_{S(\Hmeio)\times S(\Hmeio)}<A, \quad \textrm{for all} \quad n>n_2(M).
\end{equation}
Indeed, we already know (see \eqref{Psi} and \eqref{Apsi}) that there exists $C_1>0$ such that
$$\sum_{j=1}^{\infty}\|(\psi^j, \wtpsi^j) \|^2_{H^1\times H^1}\leq C_1.$$
Therefore, we can choose $M_1\in \mathbb{N}$ with
$$\sum_{j=M_1}^{\infty}\|(\psi^j, \wtpsi^j) \|^2_{H^1\times H^1}\leq \delta/2,$$
where $\delta>0$ is a sufficiently small number to be chosen later.
Moreover, by \eqref{I1}), for all $M> M_1$ there exists $n_3(M)\geq 1$ such that for all $n\geq n_3(M)$, we have
\begin{equation}\label{SUMNLSs}
\sum_{j=M_1}^{M}\|\textrm{NLSs}(-t^j_n)(\eta^j,\widetilde{\eta}^j)\|^2_{H^1\times H^1}\leq \delta.
\end{equation}
Therefore, if $\delta<\delta_{sd}$, where $\delta_{sd}>0$ is given by Corollary \ref{teocri2}, the small data global theory in $H^1\times H^1$ yields, for all $n\geq n_3(M)$,
\begin{equation*}
\sum_{j=M_1}^{M}\|(w_n^j(t), z_n^j(t))\|^2_{H^1\times H^1}\leq C_{sd} \sum_{j=M_1}^{M}\|\textrm{NLSs}(-t^j_n)(\eta^j,\widetilde{\eta}^j)\|^2_{H^1\times H^1}\leq C_{sd}\delta_{sd}.
\end{equation*}
On the other hand, by the definition of $(u^M_n, v^M_{n})$ (see \eqref{uMnvMn}),
\begin{equation*}
\begin{split}
\Big\|(u^M_n(t), v^M_{n}(t))\Big\|^2_{H^1\times H^1}=&\sum_{j=1}^{M_1-1}\|(w_n^j(t), z_n^j(t))\|^2_{H^1\times H^1}+\sum_{j=M_1}^{M}\|(w_n^j(t), z_n^j(t))\|^2_{H^1\times H^1}\\
&+2\sum_{M_1\leq l\neq k\leq M}\big((w_n^l(t), z_n^l(t)), (w_n^k(t), z_n^k(t))\big)_{H^1\times H^1}.
\end{split}
\end{equation*}
For every $l\neq k$ we deduce from \eqref{pr1} that 
$$
\sup_{t\in \R}\big|\big((w_n^l(t), z_n^l(t)), (w_n^k(t), z_n^k(t))\big)_{H^1\times H^1}\big|\rightarrow 0, \quad \textrm{as} \quad n\rightarrow \infty.
$$
Thus, from the last three relations and \eqref{NLSs2}, for all $M> M_1$, there exist $A_1>0$ (independent of $M$) and $n_4(M)\geq 1$ such that
\begin{equation}\label{A_1}
\sup_{t\in \R}\Big\|(u^M_n(t), v^M_{n}(t))\Big\|_{H^1\times H^1}\leq A_1,
\end{equation}
for all $n\geq n_4(M)$.

Next, we make a similar estimate for the norm $L^{5}_{x,t}$ (recall that $(5,5)$ is an $\Hmeio$ admissible pair). It follows from the elementary inequality
$$
\Big||\sum_{j=1}^Ma_j|^{\alpha}-\sum_{j=1}^M|a_j|^{\alpha}\Big|\leq C_{\alpha,M}\sum_{l\neq k} |a_l||a_k|^{\alpha-1},
$$
for every $\alpha>1$, $M\geq 1$, and $a_j\in \C$, that 
\begin{equation*}
\begin{split}
\Big\|u^M_n\Big\|^{5}_{L^{5}_{x,t}}\leq \sum_{j=1}^{M_1-1}\|w_n^j\|^{5}_{L^{5}_{x,t}}+\sum_{j=M_1}^{M}\|w_n^j\|^{5}_{L^{5}_{x,t}}+C_{M}\!\!\!\sum_{M_1\leq l\neq k\leq M}\int_{\R^4} |w_n^l||w_n^k||w_n^k|^{3} dx dt
\end{split}
\end{equation*}
for all $M>M_1$. From \eqref{NLSs}, there exists $A(M_1)>0$ such that, for all $n\in \N$,
$$
\sum_{j=1}^{M_1-1}\|w_n^j\|^{5}_{L^{5}_{x,t}}\leq A(M_1).
$$
On the other hand, if $\delta>0$ is small enough, we deduce from the small data global theory in $\Hmeio\times \Hmeio$ (Theorem \ref{teocri}), Strichartz estimate (Lemma \ref{strilem} (i)), \eqref{wnjznj} and \eqref{SUMNLSs} that
\begin{equation*}
\begin{split}
\sum_{j=M_1}^{M}\|w_n^j\|^{5}_{L^{5}_{x,t}}&\leq 2^5 \sum_{j=M_1}^{M}\|\textrm{NLSs}(-t^j_n)(\eta^j,\widetilde{\eta}^j)\|^5_{H^1\times H^1}\\
&\leq 2^5\sum_{j=M_1}^{M}\|\textrm{NLSs}(-t^j_n)(\eta^j,\widetilde{\eta}^j)\|^2_{H^1\times H^1}\leq 2^6\delta,
\end{split}
\end{equation*}
for all $n\geq n_3(M)$.
Moreover, if $M_1\leq l\neq k\leq M$, H\"older's inequality and another application of Theorem \ref{teocri} yield
\begin{equation*}
\begin{split}
\int_{\R^4} |w_n^l||w_n^k||w_n^k|^{3} dx dt&\leq \|w_n^j\|^{3}_{L^{5}_{x,t}}\Big(\int_{\R^4} |w_n^l|^{5/2}|w_n^k|^{5/2}\Big)^{2/5}\\
&\leq 2^3\|\textrm{NLSs}(-t^j_n)(\eta^j,\widetilde{\eta}^j)\|^3_{H^1\times H^1}\Big(\int_{\R^4} |w_n^l|^{5/2}|w_n^k|^{5/2}\Big)^{2/5}
\end{split}
\end{equation*}
and we deduce from \eqref{pr1} that the right-hand side of the above inequality goes to $0$ as $n\rightarrow \infty$. Collecting  the last four relations, for all $M> M_1$, there exist $A_2>0$ (independent of $M$) and $n_5(M)\geq 1$ such that, for $n\geq n_5(M)$,
$$
\|u^M_n\|_{L^{5}_{x,t}}\leq A_2.
$$
An analogous estimate can be done for $v^M_n$, which implies 
\begin{equation}\label{A_2}
\|u^M_n\|_{L^{5}_{x,t}}+\|v^M_{n}\|_{L^{5}_{x,t}}\leq A_2,
\end{equation}
for $n\geq n_5(M)$.

Next, taking $n_6(M)= \max\{n_4(M), n_5(M)\}$, using \eqref{A_1} and \eqref{A_2} an interpolation argument and the fact that $H^1(\R^3)\subset L^3(\R^3)$ we have, for all $M>M_1$,
\begin{equation}\label{L8L4}
\begin{split}
\|u^M_n\|_{L^{8}_{t}L^{4}_{x}}\leq \|u^M_n\|^{3/8}_{L^{\infty}_{t}L^{3}_{x}}\|u^M_n\|^{5/8}_{L^{5}_{x,t}} \leq \|u^M_n\|^{3/8}_{L^{\infty}_{t}H^1}\|u^M_n\|^{5/8}_{L^{5}_{x,t}}\leq A_1^{3/8}A_2^{5/8},
\end{split}
\end{equation}
for all $n\geq n_6(M)$. Analogously, we can also obtain the same estimate for $v^M_n$.

Now, since $(u^M_n, v^M_{n})$ satisfies  system \eqref{3nlstil2} we can apply Lemma \ref{strilem} to the integral formulation to deduce (note that $(8/5,4)$ is a $\dot{H}^{-1/2}$ admissible pair)
\begin{equation*}
\begin{split}
\|(u^M_n, v^M_{n})\|_{S(\Hmeio)\times S(\Hmeio)}\leq &\;\; c \|(u^M_n(0), v^M_{n}(0))\|_{H^1\times H^1}\\
&+ c\|u^M_n\|^3_{L^{8}_{t}L^{4}_{x}}+c\|v^M_n\|^3_{L^{8}_{t}L^{4}_{x}}\\
&+c\|(e^{M}_{n,1}, e^{M}_{n,2})\|_{S'(\dot{H}^{-1/2})\times S'(\dot{H}^{-1/2})},
\end{split}
\end{equation*}
which conclude the proof of \eqref{CLAIM2} in view of \eqref{A_1}, \eqref{L8L4} and \eqref{CLAIM3}.\\

Finally, taking $\varepsilon=\varepsilon(A)$ we can find $M_0(\varepsilon(A)), n_0(\varepsilon(A))\in \N$ such that \eqref{CLAIM1} holds. Thus setting $n_3=\max \{n_0(\varepsilon(A)), n_1(M_0), n_2(M_0)\}$ in view of \eqref{CLAIM1}-\eqref{CLAIM3}, we can apply the long time perturbation theory (Proposition \ref{LTstabteo}) to deduce
\begin{equation}\label{SCunvn2}
\|(u_n,v_n)\|_{S(\Hmeio)\times S(\Hmeio)}<\infty,
\end{equation}
for every $n\geq n_3$, a contradiction with \eqref{SCunvn}.
\end{proof}

Now we have all tools to prove the existence of a critical solution.

\begin{theorem}[Existence of a critical solution]\label{ECE} If $M_c < M(P,Q)E(P,Q)$, there exists $(u_{c,0},v_{c,0})\in H^1\times H^1$ such that the corresponding solution $(u_c,v_c)$ of the system \eqref{3nls} is global in $H^1\times H^1$ with
\begin{enumerate}
\item[(i)] $M(u_c,v_c)=1;$
\item[(ii)]$E(u_c,v_c)=M_c<M(P,Q)E(P,Q);$
\item[(iii)] $M(u_{c},v_{c})A(u_{c},v_{c})<M(P,Q)A(P,Q);$
\item[(iv)] $\|u_c\|_{S(\Hmeio)}+\|v_c\|_{S(\Hmeio)}=\infty.$
\end{enumerate}
\end{theorem}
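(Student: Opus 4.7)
The plan is to extract $(u_{c,0},v_{c,0})$ from the profile decomposition of $(u_{n,0},v_{n,0})$ prepared in this section, leaning on Lemma~\ref{Uniq} to reduce to a single nontrivial profile and on Proposition~\ref{LTstabteo} as the contradiction engine. First I show that exactly one of the profiles $(\psi^j,\wtpsi^j)$ is nonzero. Lemma~\ref{Uniq} already forces at most one. If all of them vanished, then by the recursive construction in the proof of Theorem~\ref{profdec} one would have $(W_n^M,\wtw_n^M)=(u_{n,0},v_{n,0})$ for every $M$, so \eqref{pr2} would give $\|(e^{it\Delta}u_{n,0},e^{it\Delta}v_{n,0})\|_{S(\Hmeio)\times S(\Hmeio)}\to 0$, and the small data theory of Theorem~\ref{teocri} would produce $\|(u_n,v_n)\|_{S(\Hmeio)\times S(\Hmeio)}<\infty$ for $n$ large, contradicting \eqref{SCunvn}. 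Relabeling, I assume $(\psi^1,\wtpsi^1)\neq(0,0)$ and $(\psi^j,\wtpsi^j)=(0,0)$ for $j\geq 2$; the same recursion then gives $(W_n^M,\wtw_n^M)=(W_n^1,\wtw_n^1)$ for all $M$, so \eqref{pr2} specializes to
$$\|(e^{it\Delta}W_n^1,e^{it\Delta}\wtw_n^1)\|_{S(\Hmeio)\times S(\Hmeio)}\to 0,$$
a smallness I will exploit repeatedly.

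Next, I show $M(\psi^1,\wtpsi^1)=1$. By \eqref{Psi} one has $M(\psi^1,\wtpsi^1)\leq 1$; suppose strict inequality. Mimicking the construction in the proof of Lemma~\ref{Uniq} (nonlinear flow when $t_n^1\to\bar t^1\in\R$, and the wave operator of Theorem~\ref{waveop} or its backward analog when $|t_n^1|\to\infty$), I produce $(\eta^1,\widetilde\eta^1)\in H^1\times H^1$ with $M(\eta^1,\widetilde\eta^1)=M(\psi^1,\wtpsi^1)<1$, $M(\eta^1,\widetilde\eta^1)A(\eta^1,\widetilde\eta^1)<M(P,Q)A(P,Q)$, and $E(\eta^1,\widetilde\eta^1)\leq M_c$ from \eqref{EnergyPsi}. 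Hence $M(\eta^1,\widetilde\eta^1)E(\eta^1,\widetilde\eta^1)<M_c$, and by definition of $M_c$ the solution $\textrm{NLSs}(t)(\eta^1,\widetilde\eta^1)$ lies in $S(\Hmeio)\times S(\Hmeio)$. Using $\textrm{NLSs}(t-t_n^1)(\eta^1,\widetilde\eta^1)(\cdot-x_n^1)$ as an approximate solution (with identically zero error) and combining \eqref{I1} with the smallness above, Proposition~\ref{LTstabteo} yields $\|(u_n,v_n)\|_{S(\Hmeio)\times S(\Hmeio)}<\infty$ for $n$ large, contradicting \eqref{SCunvn}. Therefore $M(\psi^1,\wtpsi^1)=1$.

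Finally, set $(u_{c,0},v_{c,0}):=(\eta^1,\widetilde\eta^1)$, constructed as in the previous step but now with $M(\psi^1,\wtpsi^1)=1$. Property (i) is immediate from $M(\eta^1,\widetilde\eta^1)=M(\psi^1,\wtpsi^1)=1$. For (iii), in the bounded case one passes to the limit in $M(u_{n,0},v_{n,0})A(u_{n,0},v_{n,0})\leq\overline\delta M(P,Q)A(P,Q)$ using \eqref{pr3}--\eqref{pr4} and $M(W_n^1,\wtw_n^1)\to 0$, while in the unbounded case it is exactly \eqref{waveop1}. For (ii), the energy decomposition \eqref{ED} together with \eqref{EnergyPsi} and the nonnegativity of $E(W_n^1,\wtw_n^1)$ for large $n$ give $E(u_{c,0},v_{c,0})\leq M_c$; a strict inequality would, by definition of $M_c$, produce $\|(u_c,v_c)\|_{S(\Hmeio)\times S(\Hmeio)}<\infty$, and the same perturbation argument would contradict \eqref{SCunvn}. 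Property (iv) is obtained by the same mechanism: otherwise SC$(u_{c,0},v_{c,0})$ would hold and Proposition~\ref{LTstabteo} would propagate scattering to $(u_n,v_n)$.

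The recurring technical obstacle is the Strichartz smallness of the initial-data difference required by Proposition~\ref{LTstabteo}; via \eqref{I1} this reduces exactly to the smallness of $\|(e^{it\Delta}W_n^1,e^{it\Delta}\wtw_n^1)\|_{S(\Hmeio)\times S(\Hmeio)}$, which is the essential payoff of the first step (only one nontrivial profile). A secondary bookkeeping point is checking the applicability of the wave operator in the unbounded-time regime, which rests on $\frac{1}{2}A(\psi^1,\wtpsi^1)\leq M_c<M(P,Q)E(P,Q)$, a direct consequence of \eqref{EnergyPsi} and the standing contradiction hypothesis $M_c<M(P,Q)E(P,Q)$.
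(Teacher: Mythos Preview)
Your proof is correct and follows essentially the same route as the paper's: reduce to a single nontrivial profile via Lemma~\ref{Uniq}, construct the critical data through the nonlinear flow or the wave operator, and use Proposition~\ref{LTstabteo} as the contradiction engine. The only differences are organizational---you prove $M(\psi^1,\wtpsi^1)=1$ by a separate perturbation argument before (iv), whereas the paper first proves (iv) and then deduces $M=1$, $E=M_c$ from $M E=M_c$ together with $M\le 1$, $E\le M_c$---and your justification of (iii) in the bounded-time case should really invoke Theorem~\ref{globalPastor}, since the limit you take bounds $A(\psi^1,\wtpsi^1)$ rather than $A(\eta^1,\widetilde\eta^1)$ directly.
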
 
\begin{proof}
Let $\{(u_{n,0}, v_{n,0})\}_{n\in \N}\subset H^1\times H^1$ be the sequence constructed in the introduction of this section and let $ \{(\psi^j, \wtpsi^j)\}_{j\in \N}\subset H^1\times H^1$ be the sequence constructed in Lemma \ref{Uniq}. In view of Lemma \ref{Uniq} we can assume, without loss of generality, that $(\psi^1, \wtpsi^1)\neq (0,0)$ and $(\psi^j, \wtpsi^j)=(0,0)$, for all $j\geq 2$. For simplicity we omit the index $1$ in what follows. Using the same argument as the one in the previous case, we can find $(\eta,\widetilde{\eta})\in H^1\times H^1$ such that
\begin{equation}\label{III1}
M(\eta,\widetilde{\eta})=M(\psi, \wtpsi)\leq 1,
\end{equation}
\begin{equation}\label{III11}
M(\eta,\widetilde{\eta})A(\eta,\widetilde{\eta})<M(P,Q)A(P,Q),
\end{equation}
\begin{equation}\label{III2}
E(\eta,\widetilde{\eta})=\frac{1}{2}A(\psi, \wtpsi)\leq M_c,
\end{equation}
and
\begin{equation}\label{III3}
\|\textrm{NLSs}(-t_n)(\eta,\widetilde{\eta})-(e^{-i{t_n}\Delta}\psi, e^{-it_n\Delta}\wtpsi)\|_{H^1\times H^1}  \rightarrow 0, \quad \textrm{as} \quad n\rightarrow \infty.
\end{equation}

In view of \eqref{III1}-\eqref{III2} it is easy to see that $(\eta,\widetilde{\eta})$ satisfies conditions \eqref{EM} and \eqref{EM1}. Set
$$
(K^M_n, \widetilde{K}^M_n)=(W^M_n, \wtw_n^M )+(e^{-i{t_n}\Delta}\psi(\cdot-x_n), e^{-it_n\Delta}\wtpsi(\cdot-x_n))- \textrm{NLSs}(-t_n)(\eta,\widetilde{\eta})(\cdot-x_n).
$$
Using the Strichartz estimates (Lemma \ref{strilem}), \eqref{III3}, and \eqref{pr2}, we deduce
\begin{equation}\label{Kmn}
\lim_{M\to\infty}\Big[\lim_{n\to\infty}\big( \|e^{it\Delta}K_n^M\|_{S(\Hmeio)}+\|e^{it\Delta}\widetilde{K}_n^M\|_{S(\Hmeio)}\Big)\Big]=0.
\end{equation}
By definition of $K^M_n$ and $\widetilde{K}^M_n$, we can write (see \eqref{Decomp})
\begin{equation}\label{un0vn0}
(u_{n,0}, v_{n,0})=\textrm{NLSs}(-t_n)(\eta,\widetilde{\eta})(\cdot-x_n)+(K^M_n, \widetilde{K}^M_n).
\end{equation}

Let $(u_c,v_c)$ be the solution of  \eqref{3nls} with initial data $(\eta,\widetilde{\eta})$, that is, 
\begin{equation}\label{ucvc}
(u_c(t),v_c(t))=\textrm{NLSs}(t)(\eta,\widetilde{\eta}).
\end{equation}
Since $(\eta,\widetilde{\eta})$ satisfies conditions \eqref{EM} and \eqref{EM1},  by Theorem \ref{globalPastor}, the solution \eqref{ucvc} exists globally and inequality (iii) in the statement of Theorem \ref{ECE} holds. 

We claim that 
\begin{equation}\label{claim}
\|u_c\|_{S(\Hmeio)}+\|v_c\|_{S(\Hmeio)}=\infty.
\end{equation}

Indeed, suppose that $\|u_c\|_{S(\Hmeio)}+\|v_c\|_{S(\Hmeio)}<\infty$. The idea is to use the long time perturbation theory to obtain a contradiction. If we define
\begin{equation}\label{tuntvn}
(\widetilde{u}_n(t), \widetilde{v}_n(t))=\textrm{NLSs}(t-t_n)(\eta,\widetilde{\eta})(\cdot-x_n),
\end{equation}
then
$$\|\widetilde{u}_n\|_{S(\Hmeio)}+\|\widetilde{u}_n\|_{S(\Hmeio)}=\|u_c\|_{S(\Hmeio)}+\|v_c\|_{S(\Hmeio)}<\infty.$$
Moreover, if $(u_n(t), v_n(t))=\textrm{NLSs}(t)(u_{n,0}, v_{n,0})$, then by \eqref{un0vn0} and \eqref{tuntvn},
\begin{equation*}
\|e^{it\Delta}(u_n(0)-\widetilde{u}_n(0))\|_{S(\Hmeio)}+\|e^{it\Delta}(v_n(0)-\widetilde{v}_n(0))\|_{S(\Hmeio)}
\end{equation*}
\begin{equation*}
=\|e^{it\Delta}K_n^M\|_{S(\Hmeio)}+\|e^{it\Delta}\widetilde{K}_n^M\|_{S(\Hmeio)} \rightarrow 0, \quad \textrm{as} \quad M,n\rightarrow \infty,
\end{equation*}
where in the last line we have used the limit \eqref{Kmn}. Therefore, applying Proposition \ref{LTstabteo} with $e=0$, we obtain $\|u_n\|_{S(\Hmeio)}+\|v_n\|_{S(\Hmeio)}<\infty$, for $n\in \mathbb{N}$ large enough, which is a contradiction with \eqref{SCunvn}.

It remains to prove that
\begin{equation}\label{MEucvc}
\quad M(u_c,v_c)=1 \quad \textrm{ and } \quad E(u_c,v_c)=M_c.
\end{equation}
By \eqref{III11} and the definition of $M_c$ we have $M(u_c,v_c)E(u_c,v_c)=M_c$, otherwise we cannot have \eqref{claim}. Thus, inequalities \eqref{III1} and \eqref{III2} imply \eqref{MEucvc}, which completes the proof.
\end{proof}

Next, we show that the flow associated to the critical solution $(u_c,v_c)$ given by Theorem \ref{ECE} enjoys some compactness properties up to a continuous shift $x(t)$ in space. More specifically we have the following proposition.

\begin{proposition}[Precompactness of the critical flow]\label{Precomp}
Let $(u_c,v_c)$ be the critical solution constructed in Theorem \ref{ECE}, then there exists a continuous path $x\in C([0,\infty);\R^3)$ such that the set
$$
\mathcal{B}:=\{(u_c(\cdot-x(t), t),v_c(\cdot-x(t), t) ): t\geq 0\}\subset H^1\times H^1
$$
is precompact in $H^1\times H^1$.
\end{proposition}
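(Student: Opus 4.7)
The plan is a standard concentration-compactness argument: I will first show that $\mathcal{B}$ is precompact in $H^1\times H^1$ modulo space translations (that is, for every sequence $\{t_n\}\subset [0,\infty)$ there are $y_n\in\R^3$ and $(\phi_\infty,\tilde\phi_\infty)\in H^1\times H^1$ such that, along a subsequence, $(u_c(\cdot+y_n,t_n),v_c(\cdot+y_n,t_n))\to (\phi_\infty,\tilde\phi_\infty)$ in $H^1\times H^1$), and then extract a continuous path $x(\cdot)$ of space shifts. The case of bounded $\{t_n\}$ is handled directly by the continuity of the $H^1\times H^1$--flow, so from here on we take $t_n\to\infty$ and apply the profile decomposition of Theorem \ref{profdec} to the $H^1\times H^1$-bounded sequence $\phi_n:=u_c(\cdot,t_n)$, $\tilde\phi_n:=v_c(\cdot,t_n)$, obtaining profiles $(\psi^j,\tilde\psi^j)$, time shifts $\{t_n^j\}$, space shifts $\{x_n^j\}$, and remainders $(W_n^M,\tilde W_n^M)$.

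The mass/energy Pythagorean expansions \eqref{pr3}, \eqref{pr4} and the energy decomposition that closes Section \ref{sec4}, combined with Lemma \ref{grolemma}(i) (positivity of every limiting energy in $\mathcal{K}$), give
\[
\sum_j M(\psi^j,\tilde\psi^j)+\lim_n M(W_n^M,\tilde W_n^M)= M(u_c,v_c)=1,
\qquad
\sum_j E_\infty^j+\lim_n E(W_n^M,\tilde W_n^M)= M_c,
\]
where $E_\infty^j:=\lim_n E(e^{-it_n^j\Delta}\psi^j(\cdot-x_n^j),e^{-it_n^j\Delta}\tilde\psi^j(\cdot-x_n^j))$. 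Running an argument entirely parallel to that of Lemma \ref{Uniq}, if there were more than one nontrivial profile, or if the remainder failed to vanish in $H^1\times H^1$ (and so carried positive limiting mass and energy), then by the above Pythagorean identities each surviving piece would satisfy mass--energy strictly less than $M_c$, hence SC by the very definition of $M_c$; Proposition \ref{LTstabteo} applied to the superposition of the corresponding nonlinear profiles would then yield $\|(u_c,v_c)\|_{S(\Hmeio)\times S(\Hmeio)[t_n,\infty)}<\infty$, contradicting Theorem \ref{ECE}(iv) when combined with the local bound on $[0,t_n]$. Thus exactly one profile $(\psi^1,\tilde\psi^1)$ is nonzero, $M(\psi^1,\tilde\psi^1)=1$, and $(W_n^M,\tilde W_n^M)\to 0$ strongly in $H^1\times H^1$.

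Next, I claim $\{t_n^1\}$ is bounded. If, along a subsequence, $t_n^1\to -\infty$, then $\tfrac12 A(\psi^1,\tilde\psi^1)=M_c<M(P,Q)E(P,Q)$ by the energy decomposition (using $\|e^{-it_n^1\Delta}\psi^1\|_{L^4_x}\to 0$ as in \eqref{L4}), so the forward wave operator (Theorem \ref{waveop}) produces $(\eta,\tilde\eta)\in H^1\times H^1$ with $\|\textrm{NLSs}(-t_n^1)(\eta,\tilde\eta)-e^{-it_n^1\Delta}(\psi^1,\tilde\psi^1)\|_{H^1\times H^1}\to 0$ and $\|\textrm{NLSs}(\cdot)(\eta,\tilde\eta)\|_{S(\Hmeio)\times S(\Hmeio)[0,\infty)}<\infty$ (forward scattering of the wave-operator image). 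Applying Proposition \ref{LTstabteo} on $[t_n,\infty)$ with approximating solution $\textrm{NLSs}(t-t_n^1)(\eta,\tilde\eta)(\cdot-x_n^1)$ and vanishing error in \eqref{st2} coming from the single-profile expansion, one concludes $\|(u_c,v_c)\|_{S(\Hmeio)\times S(\Hmeio)[t_n,\infty)}<\infty$ for large $n$, contradicting Theorem \ref{ECE}(iv). The symmetric case $t_n^1\to +\infty$ is treated identically with the backward wave operator and the backward Strichartz bound for $(\eta,\tilde\eta)$, noting that the same contradiction is reached via the long-time perturbation formulated on the appropriate semi-infinite interval. Consequently $\{t_n^1\}$ is bounded; passing to a further subsequence $t_n^1\to t^\ast$ we get $e^{-it_n^1\Delta}(\psi^1,\tilde\psi^1)\to e^{-it^\ast\Delta}(\psi^1,\tilde\psi^1)$ in $H^1\times H^1$, and setting $y_n:=x_n^1$, $(\phi_\infty,\tilde\phi_\infty):=e^{-it^\ast\Delta}(\psi^1,\tilde\psi^1)$ delivers the required $H^1\times H^1$-convergence. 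The continuous path $x\in C([0,\infty);\R^3)$ is then produced by a standard selection argument: for each $t\ge 0$ define $x(t)$ as a minimizer of $\|u_c(\cdot-x,t)-\phi_\infty\|_{H^1}^2+\|v_c(\cdot-x,t)-\tilde\phi_\infty\|_{H^1}^2$ (which exists by the precompactness just shown) and verify continuity from the continuity of $t\mapsto (u_c(t),v_c(t))$ in $H^1\times H^1$.

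The main obstacle is the step ruling out $|t_n^1|\to\infty$: since the wave-operator image $(\eta,\tilde\eta)$ satisfies $M(\eta,\tilde\eta)E(\eta,\tilde\eta)=M_c$ (equality, not strict inequality), one cannot directly invoke the definition of $M_c$ to get scattering of its nonlinear flow. The resolution exploits that Theorem \ref{waveop} itself provides an automatic Strichartz bound on the asymptotic half-line on which the wave operator is constructed, together with the fact that the approximating interval $[{-t_n^1},\infty)$ (or $(-\infty,-t_n^1]$) lies in the correct direction to absorb this bound; verifying that Proposition \ref{LTstabteo} can be applied uniformly in $n$ over such expanding intervals is the most delicate technical point.
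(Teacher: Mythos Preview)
Your approach is essentially the same as the paper's: apply the profile decomposition to $(u_c(t_n),v_c(t_n))$, use the Pythagorean expansions together with Lemma~\ref{Uniq} to force a single profile carrying full mass and energy with remainder vanishing in $H^1\times H^1$, and then rule out $|t_n^1|\to\infty$. Two points are worth flagging.

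First, on the step $|t_n^1|\to\infty$: you go through wave operators and long-time perturbation, and then have to confront the equality $M(\eta,\tilde\eta)E(\eta,\tilde\eta)=M_c$ which blocks a direct appeal to the definition of $M_c$. Your workaround (the Strichartz bound built into the wave-operator construction on the relevant half-line) is valid, but this difficulty is self-inflicted. The paper, following \cite{hr} and \cite{Xu12}, takes the simpler route: since $(W_n^M,\widetilde W_n^M)\to 0$ in $H^1\times H^1$, one has $\|e^{it\Delta}u_c(t_n)\|_{S(\dot H^{1/2};I)}\approx \|e^{i(t-t_n^1)\Delta}\psi^1\|_{S(\dot H^{1/2};I)}$ on the appropriate half-line $I$, and the latter is small by dominated convergence (the shifted interval escapes to infinity). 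Then Theorem~\ref{teocri} applied directly to the data $(u_c(t_n),v_c(t_n))$ gives a uniform Strichartz bound for $(u_c,v_c)$ on that half-line; combined with $t_n\to\infty$ this forces $\|(u_c,v_c)\|_{S(\dot H^{1/2})\times S(\dot H^{1/2})}<\infty$, a contradiction. No nonlinear profile and no equality issue arise.

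Second, your extraction of a continuous $x(t)$ by minimizing the distance to a fixed pair $(\phi_\infty,\tilde\phi_\infty)$ is not quite right: that pair is only a \emph{subsequential} limit along a particular $\{t_n\}$, so it cannot serve as a reference for all $t\ge 0$. The paper avoids this by working in the quotient space $G\backslash(H^1\times H^1)$ and showing precompactness there; passing back to a continuous selection $x(t)$ is then a standard topological lemma. This is easily fixed in your write-up, but the specific minimizer recipe as stated does not define $x(t)$ globally.
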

\begin{proof}
The proof is similar to that of Proposition 3.2 in \cite{dhr}. So, we only give the main steps. In $H^1\times H^1$ we let $G\simeq\R^3$ act as a translation group, that is,
$$
x_0\cdot(f_1,f_2)=(f_1(\cdot-x_0), f_2(\cdot-x_0)).
$$
Thus, in the quotient space $G\setminus H^1\times H^1$ we can introduce the metric
$$
d([f_1,f_2],[g_1,g_2]):=\inf_{x_0\in\R^3}\|(f_1(\cdot-x_0), f_2(\cdot-x_0))-(g_1,g_2)\|_{H^1\times H^1},
$$
in such a way that the proof of the proposition is equivalent to establish that the set
$$
\mathcal{C}:=\pi(\{(u_c(t),v_c(t)), \;t\geq0\})
$$
is precompact in $G\setminus H^1\times H^1$, where $\pi:H^1\times H^1\to G\setminus H^1\times H^1$ is the standard projection.

Now assume, by contradiction, that $\mathcal{C}$ is not precompact in $G\setminus H^1\times H^1$. Then, we can find a sequence $([u_c(t_n),v_c(t_n)])_{n\in\N}$ and $\varepsilon>0$ such that
\begin{equation}\label{contrGcom}
\inf_{x_0\in\R^3}\|(u_c(\cdot-x_0,t_n), v_c(\cdot-x_0,t_n))-(u_c(\cdot,t_m), v_c(\cdot,t_m))\|_{H^1\times H^1}>\varepsilon,
\end{equation}
for all $m,n\in\N$ with $m\neq n$. To obtain a contradiction with \eqref{contrGcom}, it suffices to prove that a subsequence of $(u_c(t_n), v_c(t_n))$ converges in $H^1\times H^1$.

Since $(u_c(t_n), v_c(t_n))$ is bounded in $H^1\times H^1$, we apply Theorem \ref{profdec} and Lemma \ref{Uniq} to write
\begin{equation}\label{ucdecom}
(u_c(t_n),v_c(t_n))=(e^{-it_n^1\Delta}\psi^1(\cdot-x_n^1),e^{-it_n^1\Delta}\wtpsi^1(\cdot-x_n^1))+(W_n^M,\wtw_n^M).
\end{equation}
According to the proof of Lemma \ref{Uniq} and \eqref{EnergyPsi}, we have
$$
M(\psi^1,\wtpsi^1)=1,\quad \lim_{n\to\infty}E(e^{-it_n^1\Delta}\psi^1(\cdot-x_n^1),e^{-it_n^1\Delta}\wtpsi^1(\cdot-x_n^1))=M_c.
$$
Also, in view of \eqref{pr3}, \eqref{pr4}, and the energy decomposition in \eqref{ED}, we obtain
$$
\lim_{n\to\infty}M(W_n^M,\wtw_n^M)=0,
$$
and
$$
\lim_{n\to\infty}E(W_n^M,\wtw_n^M)=0.
$$
Since $(W_n^M,\wtw_n^M)$ also satisfies \eqref{EM1}, an application of Lemma \ref{gnlemma} reveals that $E(W_n^M,\wtw_n^M)\sim A(W_n^M,\wtw_n^M)$. Hence,
$$
\lim_{n\to\infty}\|(W_n^M,\wtw_n^M)\|_{H^1\times H^1}=0.
$$
It remains to show that the first term on the right-hand side of \eqref{ucdecom} converges. To this end, it suffices to show that $\{t_n^1\}_{n\in\N}$ converges, up to a subsequence. If such a sequence does not converge, we can apply the small data global theory to obtain a contradiction as in \cite[Proposition 5.5]{hr} and \cite[Proposition 6.2]{Xu12}.
\end{proof}

As a consequence of the previous proposition we obtain the following uniform localization of the flow.

\begin{corollary}[Uniform localization]\label{UnifLoc}
Let $(u,v)$ be a solution of \eqref{3nls} and $x\in C([0,\infty);\R^3)$ a continuous path such that
$$
\mathcal{B}=\{(u(\cdot-x(t), t),v(\cdot-x(t), t) ): t\geq 0\}
$$
is precompact in $H^1\times H^1$. Then for every $\varepsilon>0$ there exists $R>0$ so that
$$
\int_{|x+x(t)|>R}\left(|\nabla u|^2 + |\nabla v|^2 +|u|^2 + |v|^2 + |u|^{4}+2\beta|uv|^{2}+|v|^{4}\right)dx<\varepsilon,
$$
for every $t\geq 0$.
\end{corollary}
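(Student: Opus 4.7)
The plan is to exploit the precompactness of $\mathcal{B}$ in $H^1\times H^1$ to reduce the desired uniform localization to the tightness of finitely many fixed $H^1$ functions. After the change of variables $y=x+x(t)$ (equivalently, working with the translates $\widetilde u(y,t)=u(y-x(t),t)$, $\widetilde v(y,t)=v(y-x(t),t)$), the claim becomes: for every $\varepsilon>0$ there is $R>0$ such that
$$
\int_{|y|>R}F(\widetilde u,\widetilde v)\,dy<\varepsilon\qquad\text{for all }t\geq0,
$$
where $F(u,v):=|\nabla u|^2+|\nabla v|^2+|u|^2+|v|^2+|u|^4+2\beta|uv|^2+|v|^4$. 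By hypothesis, $\{(\widetilde u(\cdot,t),\widetilde v(\cdot,t)):t\geq0\}=\mathcal{B}$ is precompact in $H^1\times H^1$.

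For a measurable $E\subset\R^3$, set
$$
\Phi_E(f,g):=\int_E\bigl(|\nabla f|^2+|\nabla g|^2+|f|^2+|g|^2+|f|^4+2\beta|fg|^2+|g|^4\bigr)\,dx.
$$
The first step is to verify that $\Phi_E$ is continuous on $H^1\times H^1$ uniformly in $E$. The $H^1$-terms are immediate from $\int_E|\nabla f|^2\leq\|f\|_{H^1}^2$ and the identity $|\nabla f|^2-|\nabla h|^2=(\nabla f+\nabla h)\cdot\nabla(f-h)$ (and similarly for the $L^2$-terms). For the quartic terms, write $|f|^4-|h|^4=(|f|^2+|h|^2)(|f|+|h|)(|f|-|h|)$ and use H\"older's inequality with the Sobolev embedding $H^1(\R^3)\hookrightarrow L^4(\R^3)$; analogously, the cross term $|fg|^2=|f|^2|g|^2$ is handled via
$$
|f|^2|g|^2-|h|^2|k|^2=(|f|^2-|h|^2)|g|^2+|h|^2(|g|^2-|k|^2),
$$
followed by H\"older in $L^4$. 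In this way one obtains a bound of the shape
$$
|\Phi_E(f,g)-\Phi_E(h,k)|\leq C\bigl(1+\|(f,g)\|_{H^1\times H^1}+\|(h,k)\|_{H^1\times H^1}\bigr)^3\|(f,g)-(h,k)\|_{H^1\times H^1},
$$
with $C$ independent of $E$.

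Next, fix $\varepsilon>0$. Since $\mathcal{B}$ is bounded in $H^1\times H^1$ (being precompact), the continuity estimate above yields $\delta>0$ such that whenever $(f,g)\in\mathcal{B}$ and $\|(f,g)-(h,k)\|_{H^1\times H^1}<\delta$, then $|\Phi_E(f,g)-\Phi_E(h,k)|<\varepsilon/2$ for every $E\subset\R^3$. By precompactness, choose a finite $\delta$-net $\{(h_j,k_j)\}_{j=1}^N\subset H^1\times H^1$ for $\mathcal{B}$. For each $j$, since $(h_j,k_j)\in H^1\times H^1\subset L^2\times L^2\cap L^4\times L^4$, dominated convergence furnishes $R_j>0$ with $\Phi_{\{|y|>R_j\}}(h_j,k_j)<\varepsilon/2$. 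Setting $R:=\max_{1\leq j\leq N}R_j$, for any $t\geq0$ we select $j=j(t)$ with $\|(\widetilde u(\cdot,t),\widetilde v(\cdot,t))-(h_j,k_j)\|_{H^1\times H^1}<\delta$, and conclude
$$
\Phi_{\{|y|>R\}}(\widetilde u(\cdot,t),\widetilde v(\cdot,t))\leq\Phi_{\{|y|>R\}}(h_j,k_j)+\varepsilon/2<\varepsilon,
$$
which is the desired uniform localization after undoing the translation.

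The main technical obstacle is the uniform-in-$E$ continuity of $\Phi_E$, particularly for the cross term $|uv|^2$; this is purely a H\"older/Sobolev bookkeeping issue and presents no conceptual difficulty once one commits to the $H^1\hookrightarrow L^4$ embedding in three dimensions. The rest of the argument is an abstract consequence of precompactness and tightness of individual $H^1$ functions.
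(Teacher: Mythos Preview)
Your proof is correct. The paper proceeds instead by contradiction: assuming the conclusion fails, one extracts a sequence $t_n$ along which the tail integral over $\{|y|>R_n\}$ (with $R_n\to\infty$) stays $\geq\varepsilon$; precompactness then gives $H^1\times H^1$-convergence of a subsequence of the translates to some $(\phi,\psi)$, and passing to the limit forces $\int_{|y|>R}F(\phi,\psi)\,dy\geq\varepsilon$ for every $R$, contradicting $(\phi,\psi)\in H^1\times H^1$. Your direct finite-net argument has the virtue of making explicit the uniform-in-$E$ continuity of $\Phi_E$---a fact the paper's limit passage uses implicitly---and is constructive rather than indirect. The paper's version is terser but conceptually equivalent; both are standard routes from precompactness to uniform tightness.
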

\begin{proof}
Indeed, otherwise, there exist $\varepsilon>0$ and a sequence $\{t_n\}$, such that for any $R>0$ (after a change variables)
$$
\int_{|x|>R}\left(|\nabla u|^2 + |\nabla v|^2 +|u|^2 + |v|^2 + |u|^{4}+2\beta|uv|^{2}+|v|^{4}\right)(x-x(t_n),t_n)dx\geq\varepsilon.
$$
The precompactness of $\mathcal{B}$ implies  the existence of $(\phi,\psi)\in H^1\times H^1$ such that $(u(\cdot-x(t_n), t_n),v(\cdot-x(t_n), t_n))$ converges to $(\phi,\psi)$. Passing to the limit in the last inequality, we deduce
$$
\int_{|x|>R}\left(|\nabla \phi|^2 + |\nabla \psi|^2 +|\phi|^2 + |\psi|^2 + |\phi|^{4}+2\beta|\phi\psi|^{2}+|\psi|^{4}\right)(x)dx\geq\varepsilon,
$$
for any $R>0$, which contradicts the fact that $(\phi,\psi)\in H^1\times H^1$. The proof is thus completed.
\end{proof}

\section{Rigidity and the proof of Theorem \ref{globalscatters}}\label{sec6}

In this section we prove that the critical solution $(u_c, v_c)$ construct in the previous section must vanishes, which is a contradiction with 
$$
\|u_c\|_{S(\Hmeio)}+\|v_c\|_{S(\Hmeio)}=\infty.
$$
Therefore we cannot have $M_c < M(P,Q)E(P,Q)$, completing the proof of Theorem \ref{globalscatters}.

The proof is similar to that in \cite[Sections 6.3-6.5]{Xu12}, which was based on the ideas developed in \cite{dhr} in their study of the 3D cubic focusing nonlinear Schr\"odinger equation (see also \cite{fxc}). We give the details for completeness.

We start with the following lemma which asserts that the critical solution $(u_c, v_c)$ has  zero momentum (recall \eqref{mocon}).

\begin{lemma}\label{ZeroM}
Let $(u_c, v_c)$ be the critical solution construct in Theorem \ref{ECE}, then its conserved momentum $F(u_c, v_c)$ is zero.
\end{lemma}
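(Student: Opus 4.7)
My plan is to argue by contradiction using the Galilean invariance \eqref{GaIn}: assuming $F(u_c,v_c)\neq 0$, I will boost $(u_c,v_c)$ to produce a new solution in $\mathcal{K}$ with $M\cdot E$ strictly below $M_c$ that still fails to scatter, violating the minimality built into the definition of $M_c$.

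Concretely, I would set $\xi_0 := -F(u_c,v_c)/M(u_c,v_c)\in\R^3$ (well defined since $M(u_c,v_c)=1$ by Theorem~\ref{ECE}(i)) and define the boosted pair
$$
(\tilde u,\tilde v)(x,t) := \bigl(e^{i(x\cdot\xi_0-t|\xi_0|^2)}u_c(x-2t\xi_0,t),\; e^{i(x\cdot\xi_0-t|\xi_0|^2)}v_c(x-2t\xi_0,t)\bigr),
$$
which by \eqref{GaIn} solves \eqref{3nls} globally in $H^1\times H^1$. A direct computation (using $\nabla(e^{ix\cdot\xi_0}f)=e^{ix\cdot\xi_0}(i\xi_0 f+\nabla f)$ and the unimodularity of $e^{ix\cdot\xi_0}$) yields the standard transformation laws
$$
M(\tilde u,\tilde v)=M(u_c,v_c),\qquad F(\tilde u,\tilde v)=F(u_c,v_c)+\xi_0 M(u_c,v_c)=0,
$$
$$
A(\tilde u,\tilde v)=A(u_c,v_c)+2\xi_0\cdot F(u_c,v_c)+|\xi_0|^2M(u_c,v_c)=A(u_c,v_c)-\frac{|F(u_c,v_c)|^2}{M(u_c,v_c)},
$$
$$
E(\tilde u,\tilde v)=E(u_c,v_c)+\xi_0\cdot F(u_c,v_c)+\tfrac{|\xi_0|^2}{2}M(u_c,v_c)=E(u_c,v_c)-\frac{|F(u_c,v_c)|^2}{2M(u_c,v_c)}.
$$
Under the standing assumption $F(u_c,v_c)\neq 0$ these give $E(\tilde u,\tilde v)<E(u_c,v_c)=M_c$ and $A(\tilde u,\tilde v)<A(u_c,v_c)$. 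Combined with Theorem~\ref{ECE}(ii)--(iii) and the fact that $M_c<M(P,Q)E(P,Q)$, one checks that $(\tilde u(0),\tilde v(0))$ satisfies both \eqref{EM} and \eqref{EM1}, hence lies in $\mathcal{K}$, and in addition $M(\tilde u,\tilde v)E(\tilde u,\tilde v)<M_c$.

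The definition of $M_c$ then forces SC$(\tilde u(0),\tilde v(0))$ to hold, i.e.\ $\|(\tilde u,\tilde v)\|_{S(\dot H^{1/2})\times S(\dot H^{1/2})}<\infty$. To finish I observe that $|\tilde u(x,t)|=|u_c(x-2t\xi_0,t)|$ and $|\tilde v(x,t)|=|v_c(x-2t\xi_0,t)|$, so each $L^q_t L^r_x$ norm (being a modulus norm) is preserved by the boost; consequently $\|(u_c,v_c)\|_{S(\dot H^{1/2})\times S(\dot H^{1/2})}=\|(\tilde u,\tilde v)\|_{S(\dot H^{1/2})\times S(\dot H^{1/2})}<\infty$, contradicting Theorem~\ref{ECE}(iv).

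I do not expect any real obstacle here: the only care needed is in the transformation-law computation for $A$ and $E$ under the boost, together with verifying that the shifted pair still lies in $\mathcal{K}$. Once those are in hand, the contradiction is mechanical, since both the Strichartz space $S(\dot H^{1/2})$ and the constraint set $\mathcal{K}$ are stable (in the appropriate sense) under the Galilean transformation \eqref{GaIn}.
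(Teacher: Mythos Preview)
Your proposal is correct and follows essentially the same argument as the paper: both apply the Galilean boost with $\xi_0=-F(u_c,v_c)/M(u_c,v_c)$, use the identical transformation laws for $M$, $A$, and $E$ to drop $M\cdot E$ strictly below $M_c$ while staying in $\mathcal{K}$, and then obtain a contradiction with the definition of $M_c$ via the invariance of the $S(\dot H^{1/2})$ norm under the boost. Your write-up is in fact slightly more explicit than the paper's in justifying why the Strichartz norms are preserved (since only $|\tilde u|$, $|\tilde v|$ enter).
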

\begin{proof}
Since $(u_c, v_c)$ is a solution of system \eqref{3nls}, by the Galilean invariance \eqref{GaIn}, for any $\xi_0\in \R^3$, we have that 
\begin{equation*}
(u_c^{\ast}(x,t), v_c^{\ast}(x,t))\equiv (e^{i(x\cdot\xi_0-t|\xi|^2)}u_c(x-2t\xi_0, t), e^{i(x\cdot\xi_0-t|\xi|^2)}v_c(x-2t\xi_0, t))
\end{equation*} 
is also a solution of \eqref{3nls}. By the definition of $(u^{\ast}_c, v^{\ast}_c)$ and Theorem \ref{ECE} (iv), it is clear that
\begin{equation}\label{uvast}
\|u^{\ast}_c\|_{S(\Hmeio)}+\|v^{\ast}_c\|_{S(\Hmeio)}=\|u_c\|_{S(\Hmeio)}+\|v_c\|_{S(\Hmeio)}=\infty.
\end{equation}
Moreover, a simple calculation yields (recall the conserved quantities \eqref{masscon}-\eqref{econ})
$$
A(u_c^{\ast}, v_c^{\ast})=|\xi_0|^2M(u_c, v_c)+2\xi_0\cdot F(u_c, v_c)+A(u_c, v_c),
$$
and thus
$$
E(u_c^{\ast}, v_c^{\ast})=\frac12|\xi_0|^2M(u_c, v_c)+\xi_0\cdot F(u_c, v_c)+E(u_c, v_c).
$$
Assume that $F(u_c, v_c)\neq 0$ and take $\xi_0=-F(u_c, v_c)/M(u_c, v_c)$. Then,
$$
M(u_c^{\ast}, v_c^{\ast})=M(u_c, v_c), \quad E(u_c^{\ast}, v_c^{\ast})=E(u_c, v_c)-\frac12\frac{F(u_c, v_c)^2}{M(u_c, v_c)},
$$
and
$$
A(u_c^{\ast}, v_c^{\ast})=A(u_c, v_c)-\frac{F(u_c, v_c)^2}{M(u_c, v_c)}.
$$
Therefore, by Theorem \ref{ECE} (i)-(iii) we deduce
$$
M(u_c^{\ast}, v_c^{\ast})E(u_c^{\ast}, v_c^{\ast})<M_c<M(P,Q)E(P,Q)
$$
and
$$
M(u^{\ast}_{c},v^{\ast}_{c})A(u^{\ast}_{c},v^{\ast}_{c})<M(P,Q)A(P,Q),
$$
which, in view of \eqref{uvast}, is a contradiction with the definition of $M_c$.
\end{proof}

Next, we provide a control of the spatial translation parameter $x(t)$ obtained in Proposition \ref{Precomp}. Indeed, we show that $x(t)$ cannot grow faster then $t$ as this parameter goes to infinity. This is the content of the following result.

\begin{lemma}\label{LinearGrow}
Let $(u_c, v_c)$ be the (global) critical solution construct in Theorem \ref{ECE} and $x(t)$ the spatial translation parameter obtained in Proposition \ref{Precomp}. Then
\begin{equation}\label{LiGr}
\frac{|x(t)|}{t}\rightarrow 0, \quad \textrm{as} \quad t\rightarrow +\infty.
\end{equation}
\end{lemma}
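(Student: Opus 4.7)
The plan is to argue by contradiction using a truncated first-moment identity, the vanishing of the momentum (Lemma \ref{ZeroM}), and the uniform localization provided by Corollary \ref{UnifLoc}. Suppose \eqref{LiGr} fails: there exist $\gamma>0$ and $t_n\to+\infty$ with $|x(t_n)|\ge \gamma t_n$.

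Pick a cutoff $\phi\in C_c^\infty(\R^3;\R^3)$ with $\phi(y)=y$ for $|y|\le 1$ and $\text{supp}\,\phi\subset B(0,2)$, and set $\phi_R(x):=R\phi(x/R)$, so that $\phi_R(x)=x$ on $|x|\le R$, $\|\phi_R\|_{L^\infty}\lesssim R$, and $\|D\phi_R\|_{L^\infty}\lesssim 1$. Define
\begin{equation*}
z_R(t) := \int \phi_R(x)\bigl(|u_c(x,t)|^2+|v_c(x,t)|^2\bigr)\,dx.
\end{equation*}
Using $\partial_t(|u_c|^2+|v_c|^2) = -2\nabla\cdot\text{Im}(\bar u_c\nabla u_c+\bar v_c\nabla v_c)$, integration by parts together with $F(u_c,v_c)=0$ (Lemma \ref{ZeroM}) gives
\begin{equation*}
z_R'(t) = 2\int\bigl(D\phi_R(x)-I\bigr)\cdot\text{Im}(\bar u_c\nabla u_c+\bar v_c\nabla v_c)\,dx,
\end{equation*}
whose integrand is supported on $\{|x|>R\}$.

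Fix $\varepsilon\in(0,1)$ and let $R_0=R_0(\varepsilon)$ be the radius given by Corollary \ref{UnifLoc}. Whenever $R\ge |x(t)|+R_0$ one has $\{|x|>R\}\subset\{|x+x(t)|>R_0\}$; Cauchy--Schwarz and the uniform $H^1\times H^1$ bound on $(u_c,v_c)$ (Remark \ref{Rema}) then yield $|z_R'(t)|\lesssim \sqrt{\varepsilon}$. In parallel, splitting the integral defining $z_R(t)$ at the sphere $|x+x(t)|=R_0$, noting that on $\{|x+x(t)|\le R_0\}$ one has $\phi_R(x)=x$ (since $|x|\le |x(t)|+R_0\le R$), while $\|\phi_R\|_\infty\lesssim R$ controls the tail, and using $M(u_c,v_c)=1$, one finds
\begin{equation*}
|z_R(t)+x(t)| \lesssim R_0 + \varepsilon R.
\end{equation*}

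Set $M_T:=\sup_{s\in[0,T]}|x(s)|$ and choose $R=R_T:=2M_T+R_0$, which verifies $R_T\ge |x(s)|+R_0$ for every $s\in[0,T]$. Integrating $z_{R_T}'$ over $[0,T]$, combining the two displays above by the triangle inequality, and absorbing $|x(0)|$ into the constant, we obtain
\begin{equation*}
|x(T)|\lesssim 1 + R_0 + \varepsilon M_T + \sqrt{\varepsilon}\,T.
\end{equation*}
Taking the supremum over $T\in[0,T^\ast]$ and absorbing $\varepsilon M_{T^\ast}$ on the left for $\varepsilon$ small gives $M_{T^\ast}\lesssim 1+R_0+\sqrt{\varepsilon}\,T^\ast$; evaluating at $T^\ast=t_n$ yields $\gamma\le |x(t_n)|/t_n\lesssim (1+R_0)/t_n+\sqrt{\varepsilon}$, and letting $n\to\infty$ followed by $\varepsilon\to 0$ contradicts $\gamma>0$.

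The principal obstacle is the coupled choice of the truncation radius and the integration time: $R$ must exceed $\sup_{s\le T}|x(s)|+R_0$ so that the corrector in $z_R'$ is controlled by the localization estimate, but cannot be too large relative to $|x(t)|$, otherwise the tail bound $\varepsilon R$ in $|z_R(t)+x(t)|\lesssim R_0+\varepsilon R$ overwhelms the information on $x(t)$ it encodes. Taking $R_T$ comparable to $M_T$ threads this needle and produces the Gronwall-type inequality $M_T\lesssim 1+R_0+\varepsilon M_T+\sqrt{\varepsilon}T$, from which the sublinear growth \eqref{LiGr} is read off.
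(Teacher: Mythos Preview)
Your proof is correct and follows essentially the same approach as the paper: a truncated first-moment identity, the vanishing momentum from Lemma~\ref{ZeroM}, and the uniform localization of Corollary~\ref{UnifLoc} combined via the Fundamental Theorem of Calculus to yield a contradiction. The only cosmetic differences are that the paper replaces $t_n$ by the first hitting time $\tau_n=\inf\{t:|x(t)|\ge|x(t_n)|\}$ to control $\sup_{[0,\tau_n]}|x(s)|$, whereas you work directly with $M_T=\sup_{s\in[0,T]}|x(s)|$ and close with an absorption step; and the paper bounds $|z_R'|\lesssim\eta$ (via $|\bar u\nabla u|\le\tfrac12(|u|^2+|\nabla u|^2)$ and localization on both factors) rather than your $\sqrt{\varepsilon}$ from Cauchy--Schwarz with one global $H^1$ bound.
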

\begin{proof}
Suppose, by contradiction, that there exist $\delta>0$ and a sequence $t_n\rightarrow +\infty$ such that, for all $n\in \N$,
\begin{equation}\label{xtn}
\frac{|x(t_n)|}{t_n}\geq \delta.
\end{equation}
We assume, without loss of generality, that $x(0)=0$ and define
$$
\tau_n=\inf\{t\geq 0: |x(t)|\geq|x(t_n)|\},
$$
so that $0\leq \tau_n\leq t_n$ and $|x(t)|<|x(t_n)|$, for all $0\leq t < \tau_n$. The continuity of $x(t)$ yields $|x(\tau_n)|=|x(t_n)|$ and then, by \eqref{xtn}, it is clear that 
\begin{equation}\label{xtaun}
\frac{|x(\tau_n)|}{\tau_n}\geq \delta , \quad \textrm{for all}\quad n\in \N.
\end{equation}
Moreover, since $t_n\rightarrow +\infty$ we have, by \eqref{xtn}, $x(t_n)\rightarrow +\infty$, which implies $x(\tau_n)\rightarrow +\infty$. Thus,
\begin{equation}\label{taun}
\tau_n\rightarrow +\infty, \quad \textrm{as} \quad n\rightarrow \infty.
\end{equation}

By the precompactness of the set $\mathcal{B}$ defined in Proposition \ref{Precomp} and Corollary \ref{UnifLoc}, for every $\eta>0$ there exists $R(\eta)$ such that, for any $t\geq 0$,
\begin{equation}\label{Precomp2}
\int_{|x+x(t)|>R(\eta)}\Big(|\nabla u_c|^2 + |\nabla v_c|^2 +|u_c|^2 + |v_c|^2 + |u_c|^{4}+2\beta|u_cv_c|^{2}+|v_c|^{4}\Big)dx<\eta.
\end{equation}

Now, let $\theta\in C_0^{\infty}([0,\infty))$ be a real-valued function such that $0\leq \theta \leq 1$, $\|\theta'\|_{L^{\infty}}\leq 2$, $\theta(r)=1$, if $0\leq r\leq 1$ and $\theta(r)=0$, if $ r\geq 2$, and define the truncated center of mass
\begin{equation}\label{zRn}
z_{R_n}(t)=\int x\theta\left(\frac{|x|}{R_n}\right)(|u_c(x,t)|^2+|v_c(x,t)|^2)dx,
\end{equation}
where 
\begin{equation}\label{Rn}
R_n=R(\eta) + |x(\tau_n)|.
\end{equation}

We first obtain an upper bound for $z_{R_n}(0)$ and a lower bound for $z_{R_n}(\tau_n)$. Indeed, in view of \eqref{Precomp2}, we have (recall that $x(0)=0$)
\begin{equation}\label{zrn0}
\begin{split}
|z_{R_n}(0)|\leq& \int_{|x|\leq R(\eta) }|x|\theta\left(\frac{|x|}{R_n}\right)(|u_c(x,t)|^2+|v_c(x,t)|^2)dx\\
&+\int_{|x+x(0)|> R(\eta) }|x|\theta\left(\frac{|x|}{R_n}\right)(|u_c(x,t)|^2+|v_c(x,t)|^2)dx\\
\leq&R(\eta) M(u_c, v_c) + 2R_n \eta,
\end{split}
\end{equation}
where in the last line we have also used the fact that 
\begin{equation}\label{xtheta}
|x\theta\left(|x|/R\right)|\leq 2 R,
\end{equation}
for all $R> 0$.

On the other hand, 
\begin{equation*}
\begin{split}
z_{R_n}(\tau_n)=& \int_{|x+x(\tau_n)|> R(\eta) }x\theta\left(\frac{|x|}{R_n}\right)(|u_c(x,t)|^2+|v_c(x,t)|^2)dx\\
&+\int_{|x+x(\tau_n)|\leq R(\eta) }x\theta\left(\frac{|x|}{R_n}\right)(|u_c(x,t)|^2+|v_c(x,t)|^2)dx\\
\equiv & \,\,\,I + II.
\end{split}
\end{equation*}
Again using \eqref{Precomp2} and \eqref{xtheta}, we can estimate $I$ by 
$$
|I|\leq 2R_n \eta.
$$
To estimate $II$, we first note that if $|x+x(\tau_n)|\leq R(\eta)$ then $|x|\leq |x+x(\tau_n)|+|x(\tau_n)|\leq R(\eta)+|x(\tau_n)|=R_n$, which implies $\theta\left(|x|/R_n\right)|=1$. Therefore
\begin{equation*}
\begin{split}
II=& \int_{|x+x(\tau_n)|\leq R(\eta) }x(|u_c(x,t)|^2+|v_c(x,t)|^2)dx\\
= & \int_{|x+x(\tau_n)|\leq R(\eta) }(x+x(\tau_n))(|u_c(x,t)|^2+|v_c(x,t)|^2)dx\\
&-\int_{|x+x(\tau_n)|\leq R(\eta) }x(\tau_n)(|u_c(x,t)|^2+|v_c(x,t)|^2)dx\\
=& \int_{|x+x(\tau_n)|\leq R(\eta) }(x+x(\tau_n))(|u_c(x,t)|^2+|v_c(x,t)|^2)dx\\
&-x(\tau_n)M(u_c, v_c)+x(\tau_n)\int_{|x+x(\tau_n)|> R(\eta) }(|u_c(x,t)|^2+|v_c(x,t)|^2)dx\\
\equiv &\,\,\, II_a +II_b + II_c.
\end{split}
\end{equation*}

It is clear that, $|II_a|\leq R(\eta)M(u_c, v_c)$. Moreover, by \eqref{Precomp2} and \eqref{Rn}, we have $|II_c|\leq R_n \eta$. Therefore, collecting the above estimates, we deduce
\begin{equation}
\begin{split}\label{zrnt}
|z_{R_n}(\tau_n)|&\geq |II_b|-|II_a|-|II_c|- |I|\\
& \geq |x(\tau_n)|M(u_c, v_c)- R(\eta)M(u_c, v_c) - 3R_n \eta.
\end{split}
\end{equation}

Next, we claim that there exists $c>0$ such that
\begin{equation}\label{z'rnt}
\left|\frac{d}{dt}z_{R_n}(t)\right|\leq c \eta, \quad \textrm{for all} \quad t\in [0,\tau_n].
\end{equation}

Assuming the above claim, let us conclude the proof of the lemma. Indeed, the Fundamental Theorem of Calculus, \eqref{zrn0} and \eqref{zrnt} yield
\begin{equation*}
\begin{split}
c \eta \tau_n \geq \int_{0}^{\tau_n}\left|\frac{d}{dt}z_{R_n}(t)\right| dt&\geq |z_{R_n}(\tau_n)|-|z_{R_n}(0)|
\\
&\geq |x(\tau_n)|M(u_c, v_c)- 2R(\eta)M(u_c, v_c) - 5R_n \eta.
\end{split}
\end{equation*}
Thus, using \eqref{Rn} we have proved that
$$
(M(u_c, v_c)-5\eta)\frac{|x(\tau_n)|}{\tau_n}\leq c\eta +(2M(u_c, v_c)+5\eta)\frac{R(\eta)}{\tau_n}.
$$
Therefore, choosing $\eta>0$ sufficiently small such that $\eta< M(u_c, v_c)/5$ and $c\eta/ (M(u_c, v_c)-5\eta)< \delta/2$ and using \eqref{taun}, we obtain a contradiction with \eqref{xtaun}.

To complete the proof, it remains to establish \eqref{z'rnt}. First note that taking the derivative with respect to time in \eqref{zRn} and using that $(u_c,v_c)$ is a solution of \eqref{3nls}, we have
\begin{equation*}
\begin{split}
\frac{d}{dt}z_{R_n}(t)=&\,\,\,2Im\left\{\int \theta\left(\frac{|x|}{R_n}\right)\left(\overline{u}_c(t)\nabla u_c(t)+\overline{v}_c(t)\nabla v_c(t)\right)dx\right\}\\
&+2Im\left\{\int \frac{x}{|x|R_n}\theta'\left(\frac{|x|}{R_n}\right)x\cdot\left(\overline{u}_c(t)\nabla u_c(t)+\overline{v}_c(t)\nabla v_c(t)\right)dx\right\}\\
=&\,\,\,2Im\left\{\int \left[1-\theta\left(\frac{|x|}{R_n}\right)\right]\left(\overline{u}_c(t)\nabla u_c(t)+\overline{v}_c(t)\nabla v_c(t)\right)dx\right\}\\
&+2Im\left\{\int \frac{x}{|x|R_n}\theta'\left(\frac{|x|}{R_n}\right)x\cdot\left(\overline{u}_c(t)\nabla u_c(t)+\overline{v}_c(t)\nabla v_c(t)\right)dx\right\},
\end{split}
\end{equation*}
where in the last line we have used the zero momentum property in Lemma \ref{ZeroM}. By the definition of $\theta$, it is easy to see that
$$
\left[1-\theta\left(\frac{|x|}{R_n}\right)\right]=0, \quad \textrm{for all} \quad |x|\leq R_n
$$
and (since $0\leq \theta \leq 1$)
$$
\left|1-\theta\left(\frac{|x|}{R_n}\right)\right|\leq 2, \quad \textrm{for all} \quad |x|> R_n.
$$
Moreover
$$
\theta'\left(\frac{|x|}{R_n}\right)=0, \quad \textrm{for all} \quad |x|\leq R_n \quad \textrm{and} \quad |x|\geq 2R_n.
$$
and (since $\|\theta'\|_{L^{\infty}}\leq 2$)
$$
\frac{|x|}{R_n}\left|\theta'\left(\frac{|x|}{R_n}\right)\right|\leq 4, \quad \textrm{for all} \quad R_n\leq |x|\leq 2R_n .
$$
Therefore, there exists $c>0$ such that
\begin{equation}\label{ddtz}
\begin{split}
\left|\frac{d}{dt}z_{R_n}(t)\right|&\leq \frac{c}{2} \int_{|x|>R_n}\left(|\overline{u_c}(t)\nabla u_c(t)|+|\overline{v_c}(t)\nabla v_c(t)|\right)dx\\
&\leq c \int_{|x|>R_n}\left(|\nabla u_c|^2 + |\nabla v_c|^2 +|u_c|^2 + |v_c|^2\right)dx.
\end{split}
\end{equation}
Finally, note that for $0\leq t\leq \tau_n$ and $|x|>R_n$ we have $|x+x(t)|\geq R_n - |x(\tau_n)|=R(\eta)$ by \eqref{Rn}, and thus, from \eqref{ddtz} and \eqref{Precomp2} we deduce \eqref{z'rnt}. The proof of the lemma is thus completed.
\end{proof}

Now we are in a position to prove the main result of this section, our rigidity (or Liouville-type) theorem.

\begin{theorem}[Rigidity Theorem]\label{Rigidity}
Assume that $(u_0,v_0)\in H^1\times H^1$ has zero momentum $F(u_0,v_0)=0$ and satisfies conditions \eqref{EM} and \eqref{EM1}. Let $(u,v)$ be the global solution of system \eqref{3nls} with initial data $(u_0,v_0)$ given by Theorem \ref{globalPastor} and suppose that 
$$
\mathcal{B}=\{(u(\cdot-x(t), t),v(\cdot-x(t), t) ): t\geq 0\}
$$
is precompact in $H^1\times H^1$, for some continuous path $x\in C([0,\infty);\R^3)$. Then $u_0, v_0$ are both zero.
\end{theorem}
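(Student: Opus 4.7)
My plan is to establish a truncated virial (Morawetz) identity and derive a contradiction unless $(u_0,v_0)\equiv(0,0)$. The key algebraic input is Lemma~\ref{grolemma}(iii) combined with hypothesis \eqref{EM}: these furnish a constant $c_0>0$ with $S(u(t),v(t))\ge c_0\,A(u(t),v(t))$ for every $t\ge 0$, where $S$ is defined in \eqref{Ruv}. Formally $S$ coincides with the second time derivative of $\int|x|^2(|u|^2+|v|^2)\,dx$, so a uniform positive lower bound on $S$ should force linear-in-time growth of the first derivative, contradicting the $H^1$-boundedness of $(u(t),v(t))$ provided by Remark~\ref{Rema}. Because $(u_0,v_0)$ is not assumed to have finite spatial variance, the identity must be truncated; Proposition~\ref{Precomp}/Corollary~\ref{UnifLoc} together with Lemma~\ref{LinearGrow} are precisely what is needed to keep the truncation error harmless on the relevant time scales.

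Concretely, I choose a radial $\phi\in C_c^\infty(\R^3)$ with $\phi(x)=|x|^2$ for $|x|\le 1$ and $\phi\equiv 0$ for $|x|\ge 2$, set $\phi_R(x):=R^2\phi(x/R)$, and define
\[
z_R(t):=\int \phi_R(x)\bigl(|u(x,t)|^2+|v(x,t)|^2\bigr)\,dx.
\]
Differentiating twice and invoking \eqref{3nls} yields the identities
\[
z_R'(t)=2\operatorname{Im}\int\nabla\phi_R\cdot(\bar u\nabla u+\bar v\nabla v)\,dx,\qquad z_R''(t)=S(u(t),v(t))+\mathcal{E}_R(t),
\]
where $\mathcal{E}_R(t)$ collects Hessian- and bi-Laplacian-type terms supported in the annulus $\{R\le|x|\le 2R\}$ and obeys
\[
|\mathcal{E}_R(t)|\le C\int_{|x|\ge R}\bigl(|\nabla u|^2+|\nabla v|^2+|u|^4+2\beta|uv|^2+|v|^4\bigr)\,dx,
\]
with $C$ independent of $R$ and $t$. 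Cauchy--Schwarz together with Remark~\ref{Rema} gives $|z_R'(t)|\le C_1 R$ uniformly in $t$.

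Fix $\eta>0$, let $R_0=R(\eta)$ be the radius furnished by Corollary~\ref{UnifLoc}, and note that whenever $R\ge R_0+|x(t)|$ one has $\{|x|\ge R\}\subset\{|x+x(t)|\ge R_0\}$, so $|\mathcal{E}_R(t)|\le C\eta$. One may assume $M(u_0,v_0)>0$; then a short subsequence argument using Lemma~\ref{gnlemma}, conservation of $E$, and Lemma~\ref{grolemma}(i) shows that either $E(u_0,v_0)=0$ (which via Lemma~\ref{grolemma}(i) and \eqref{EM2} forces $A(u_0,v_0)=0$ and hence $(u_0,v_0)\equiv 0$), or $\inf_{t\ge 0}A(u(t),v(t))\ge c_1>0$. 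Choose $\eta$ so small that $C\eta\le c_0 c_1/2$; then $z_R''(t)\ge c_0 c_1/2$ whenever $R\ge R_0+\sup_{s\in[0,T]}|x(s)|$ and $t\in[0,T]$. Integrating once yields
\[
\tfrac{c_0 c_1}{2}\,T\le |z_R'(T)|+|z_R'(0)|\le 2C_1 R.
\]
By Lemma~\ref{LinearGrow} and continuity of $x(\cdot)$, for any $\varepsilon>0$ there exists $T_0$ with $\sup_{s\in[0,T]}|x(s)|\le C_2(\varepsilon)+\varepsilon T$ for every $T\ge T_0$; taking $R:=R_0+C_2(\varepsilon)+\varepsilon T$ gives
\[
\Bigl(\tfrac{c_0 c_1}{2}-2C_1\varepsilon\Bigr)T\le 2C_1\bigl(R_0+C_2(\varepsilon)\bigr).
\]
Choosing $\varepsilon<c_0 c_1/(4C_1)$ and letting $T\to\infty$ produces the desired contradiction, forcing $u_0=v_0=0$. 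The delicate step is the simultaneous balancing of two requirements on $R$: it must dominate $|x(t)|$ in order for $\mathcal{E}_R$ to be negligible on all of $[0,T]$, and yet grow strictly more slowly than $T$ so that the $O(R)$ bound on $z_R'$ loses to the linear growth $c_0 c_1 T/2$. This balance is unlocked precisely by the sublinear control $|x(t)|=o(t)$ of Lemma~\ref{LinearGrow} together with the uniform localization afforded by Corollary~\ref{UnifLoc}.
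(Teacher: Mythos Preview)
Your proof is correct and follows essentially the same truncated-virial argument as the paper: same cutoff $\phi_R$, same identity $z_R''=S+\text{error}$, same use of Lemma~\ref{grolemma}(iii), Corollary~\ref{UnifLoc}, and Lemma~\ref{LinearGrow}, and the same integration-in-time contradiction. The only cosmetic differences are that the paper gets the uniform lower bound on $A(u(t),v(t))$ directly from $A\ge 2E$ (the nonlinear part of the energy being nonnegative) rather than by a subsequence argument, and its error bound for the remainder includes the bi-Laplacian mass term $R^{-2}(|u|^2+|v|^2)$ that you omitted in your displayed estimate --- harmless, since Corollary~\ref{UnifLoc} controls it as well.
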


The main tool in the proof of Theorem \ref{Rigidity} is the following local version of the Virial identity.

\begin{lemma}\label{viteo}
Let $(u,v)\in C((-T_*,T^*);H^1\times H^1)$ be a solution of
\eqref{3nls} and $\varphi\in C_0^\infty(\R^3)$. Define
$$
V(t)=\int \varphi(x)(|u|^2+|v|^2)dx.
$$
Then,
\begin{equation}\label{vlinha}
V'(t)=2Im\int\nabla \varphi\cdot\nabla u\overline{u}\,dx+ 2Im\int\nabla
\varphi\cdot\nabla v\overline{v}\,dx
\end{equation}
and
\begin{equation}\label{vi1}
\begin{split}
V''(t)=&4\sum_{k,j=1}^3Re\int\frac{\p^2\ff}{\p x_k\p
x_j}(\p_{x_k}u\p_{x_j}\ub+\p_{x_k}v\p_{x_j}\vb)dx-\int\Delta^2\ff(|u|^2+|v|^2)dx\\
&-\int \Delta
\ff(|u|^{4}+2\beta|uv|^{2}+|v|^{4})dx,
\end{split}
\end{equation}
where $\p_{x_k}$ indicates the partial derivative with respect to $x_k$.
\end{lemma}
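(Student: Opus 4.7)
The plan is to differentiate $V(t)$ in time and systematically convert time derivatives into spatial derivatives using the system \eqref{3nls}, then push all derivatives onto $\varphi$ by integration by parts; since $\varphi\in C_0^\infty(\R^3)$, no boundary terms appear.

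\textbf{First derivative.} Multiplying the first equation of \eqref{3nls} by $\bar u$ and subtracting its complex conjugate, the nonlinear terms cancel (they contribute the \emph{real} quantity $(|u|^2+\beta|v|^2)|u|^2$) and one obtains the local mass identity
\[
\partial_t|u|^2=-2\nabla\cdot\mathrm{Im}(\bar u\nabla u),
\]
and similarly $\partial_t|v|^2=-2\nabla\cdot\mathrm{Im}(\bar v\nabla v)$. Plugging these into $V'(t)=\int\varphi\,\partial_t(|u|^2+|v|^2)\,dx$ and integrating by parts once yields \eqref{vlinha}.

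\textbf{Second derivative.} I differentiate \eqref{vlinha} in $t$, substitute $u_t=i\Delta u+i(|u|^2+\beta|v|^2)u$ together with the conjugate equation (and likewise for $v$), and split the integrand into a \emph{linear} piece (arising from the $\Delta$ terms) and a \emph{nonlinear} piece. For the nonlinear piece, the Leibniz rule gives
\[
\bar u\,\nabla\bigl((|u|^2+\beta|v|^2)u\bigr)-(|u|^2+\beta|v|^2)\bar u\,\nabla u = |u|^2\,\nabla(|u|^2+\beta|v|^2),
\]
which is real. Adding the $u$ and $v$ contributions and using $2|u|^2\nabla|u|^2=\nabla|u|^4$ together with the symmetric identity for the coupling term, the nonlinear piece telescopes to
\[
\int\nabla\varphi\cdot\nabla\bigl(|u|^4+2\beta|uv|^2+|v|^4\bigr)\,dx,
\]
and one more integration by parts produces exactly $-\int\Delta\varphi\,(|u|^4+2\beta|uv|^2+|v|^4)\,dx$.

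\textbf{Linear piece.} For the remaining piece I have to compute $2\,\mathrm{Re}\int\nabla\varphi\cdot(\bar u\,\nabla\Delta u-\Delta\bar u\,\nabla u)\,dx$ and its $v$-analogue. Working in components, I integrate $\int\partial_j\varphi\,\bar u\,\partial_j\Delta u\,dx$ by parts twice: first moving one derivative from $\Delta u$ onto the product $\partial_j\varphi\,\bar u$, then moving a second derivative, organizing the result so that the two surviving derivatives fall on the two factors of $u$. The double integration by parts produces a term $-\int\partial_k\partial_j\varphi\,\partial_k\bar u\,\partial_j u\,dx$ plus a remainder in which four derivatives land on $\varphi$ and none on $u$, contributing $-\int\Delta^2\varphi\,|u|^2\,dx$ after summation. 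Combining with the conjugate term and symmetrizing in the indices $(k,j)$ recovers the Hessian term $4\sum_{k,j}\mathrm{Re}\int\partial^2_{x_kx_j}\varphi\,\partial_{x_k}u\,\partial_{x_j}\bar u\,dx$; the $v$-equation contributes the analogous expression, giving \eqref{vi1}.

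\textbf{Main obstacle.} The actual work is entirely in the linear piece: the bookkeeping of real versus imaginary parts after two integrations by parts, and in particular verifying that the lower-order remainders reassemble into exactly $-\int\Delta^2\varphi\,(|u|^2+|v|^2)\,dx$ (with no extra $|\nabla u|^2$ left over), is delicate. I plan to handle this by using index notation throughout, grouping terms by how many derivatives land on $\varphi$, and symmetrizing only at the end.
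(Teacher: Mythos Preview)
Your approach is correct and is precisely the standard derivation of the local virial identity for NLS-type equations; the paper itself does not write out the proof but simply refers to \cite[Lemma~2.9]{ka}, whose argument is exactly the one you outline (differentiate, substitute the equation, and integrate by parts, splitting into the Laplacian piece and the nonlinear piece). There is nothing to add.
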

\begin{proof}
The proof follows the same steps as in \cite[Lemma 2.9]{ka}. So we omit the details.  
\end{proof}

\begin{proof}[Proof of Theorem \ref{Rigidity}]
Let $\zeta\in C_0^{\infty}(\R^3)$ be a radial function such that $\zeta(x)=|x|^2$, if $|x|\leq 1$ and $\zeta(x)=0$, if $ |x|\geq 2$. For $R>0$ define 
\begin{equation}\label{zR}
z_{R}(t)=\int R^2\zeta\left(\frac{x}{R}\right)(|u(x,t)|^2+|v(x,t)|^2)dx.
\end{equation}

Our goal is to study $z_{R}(t)$ for large $R>0$ in a suitable time interval. We start with an upper bound for the derivative of $z_{R}(t)$. Indeed, from \eqref{vlinha}, we get
$$
\frac{d}{dt}z_{R}(t)=2 Im\left\{\int R\nabla \zeta\left(\frac{x}{R}\right)\cdot\nabla u(t)\overline{u}(t) + R\nabla
\zeta\left(\frac{x}{R}\right)\cdot\nabla v(t)\overline{v}(t)dx\right\}.
$$
Therefore, since $\nabla\zeta\left(x/R\right) = 0$ if $ |x|\geq 2R$, we can apply the H\"older's inequality and \eqref{EM2} to obtain
\begin{equation}\label{dtzR}
\begin{split}
\left|\frac{d}{dt}z_{R}(t)\right|\leq& \,\, cR\int_{|x|\leq 2R}(|\nabla u(t)||{u}(t)|+|\nabla v(t)||{v}(t)|)dx\\
\leq & \,\, cR(\|\nabla u(t)\|_{L^2}\|u(t)\|_{L^2}+\|\nabla v(t)\|_{L^2}\|v(t)\|_{L^2})\\
\leq & \,\, 2cR(M(u,v)A(u(t), v(t))^{1/2}\leq AR,
\end{split}
\end{equation}
where $A=2c(M(P,Q)A(P, Q))^{1/2}$ and thus it is independent of $R>0$ and $t\geq 0$.

Next, we establish a lower bound for the derivative of $z_{R}(t)$. To this end, we use \eqref{vi1} to conclude
\begin{equation*}
\begin{split}
\frac{d^2}{dt^2}z_{R}(t)=&\,\, 4\sum_{k,j=1}^3Re\int\frac{\p^2\zeta}{\p x_k\p
x_j}\left(\frac{x}{R}\right)(\p_{x_k}u(t)\p_{x_j}\ub(t)+\p_{x_k}v(t)\p_{x_j}\vb(t))dx\\
&-\frac{1}{R^2}\int\Delta^2\zeta\left(\frac{x}{R}\right)(|u(t)|^2+|v(t)|^2)dx\\
&-\int \Delta
\zeta\left(\frac{x}{R}\right)(|u(t)|^{4}+2\beta|u(t)v(t)|^{2}+|v(t)|^{4})dx.
\end{split}
\end{equation*}

Now, using that $\zeta$ is radial and compactly supported, we can rewrite the above expression as (recall  \eqref{Ruv} and \eqref{acon})
\begin{equation}\label{d2zr}
\frac{d^2}{dt^2}z_{R}(t)=S(u,v)(t)+ Z_R(u,v)(t),
\end{equation}
where $S$ is defined in \eqref{Ruv} and
\begin{equation*}
\begin{split}
Z_R(u,v)(t)=&\,\, 4\sum_{j=1}^3Re\int\left[\frac{\p^2\zeta}{\p x^2_j}\left(\frac{x}{R}\right)-2\right](|\p_{x_j}u(t)|^2+|\p_{x_j}v(t)|^2)dx\\
&+4\sum_{j\neq k}^3Re\int_{R\leq |x|\leq 2R}\frac{\p^2\zeta}{\p x_k\p
x_j}\left(\frac{x}{R}\right)(\p_{x_k}u(t)\p_{x_j}\ub(t)+\p_{x_k}v(t)\p_{x_j}\vb(t))dx\\
&-\frac{1}{R^2}\int\Delta^2\zeta\left(\frac{x}{R}\right)(|u(t)|^2+|v(t)|^2)dx\\
&-\int \left[\Delta
\zeta\left(\frac{x}{R}\right)-6\right](|u(t)|^{4}+2\beta|u(t)v(t)|^{2}+|v(t)|^{4})dx.
\end{split}
\end{equation*}

Since $\frac{\p^2\zeta}{\p x^2_j}\left(\frac{x}{R}\right)=2$ and $\Delta^2\zeta\left(\frac{x}{R}\right)=0$, if $|x|< R$, we obtain the following bound for $Z_R(u,v)$.
\begin{equation}\label{ZR}
|Z_R(u,v)(t)|\leq B\int_{|x|\geq R}\left(|\nabla u|^2 + |\nabla v|^2 +\frac{1}{R^2}[|u|^2 + |v|^2] + |u|^{4}+2\beta|uv|^{2}+|v|^{4}\right)dx,
\end{equation}
where the constant $B>0$ is independent of $R>0$ and $t\geq 0$.

On the other hand, we have a lower bound for $S(u,v)(t)$ from Lemma \eqref{grolemma} (iii). In fact,
\begin{equation}\label{Suvt}
 S(u,v)(t)\geq \omega A(u(t),v(t))\geq 2\omega E(u,v) ,
\end{equation}
where $\omega=8\left[1-\left(\frac{M(u,v)E(u,v)}{M(P,Q)E(P,Q)}\right)^{1/2}\right]$ (note that $\omega>0$ since $(u,v)$ satisfies \eqref{EM}).
Collecting \eqref{d2zr}-\eqref{Suvt}, we deduce
\begin{equation}\label{d2zr2}
\frac{d^2}{dt^2}z_{R}(t)\geq 2\omega E(u,v)-|Z_R(u,v)(t)|.
\end{equation}
Moreover, Corollary \ref{UnifLoc} with $\varepsilon(\omega)=\omega E(u,v)/B$ implies the existence of $R(\omega)$ such that
\begin{equation}\label{Precomp3}
\int_{|x+x(t)|>R(\omega)}\left(|\nabla u|^2 + |\nabla v|^2 +|u|^2 + |v|^2 + |u|^{4}+2\beta|uv|^{2}+|v|^{4}\right)dx<\frac{\omega E(u,v)}{B},
\end{equation}
for every $t\geq 0$.

Next, from Lemma \ref{LinearGrow} there exists $t_0>0$ such that 
$$
|x(t)|\leq \frac{\omega E(u,v)}{4A}t,
$$
for all $t\geq t_0$, where $A$ is the constant in \eqref{dtzR}. Let $t_1>t_0$ and define
\begin{equation}\label{Rt1}
R_{t_1}=R(\omega)+\frac{\omega E(u,v)}{4A}t_1.
\end{equation}
Choosing $t_1>0$ sufficiently large we can assume $R_{t_1}>1$. Moreover, if $t\in [t_0,t_1]$ and $|x|\geq R_{t_1}$ then $|x+x(t)|\geq |x|-|x(t)|\geq R_{t_1}-\frac{\omega E(u,v)}{4A}t_1=R(\omega)$. Therefore by \eqref{ZR} and \eqref{Precomp3} we conclude, for all $t\in [t_0,t_1]$, that
$$
|Z_{R_{t_1}}(u,v)(t)|\leq \omega E(u,v)
$$
and, from \eqref{d2zr2},
\begin{equation}\label{d2zr3}
\frac{d^2}{dt^2}z_{R_{t_1}}(t)\geq \omega E(u,v).
\end{equation}
Applying the Fundamental Theorem of Calculus, from \eqref{dtzR}, \eqref{d2zr3} and \eqref{Rt1}, we finally get
\begin{equation*}
\begin{split}
\omega E(u,v)(t_1-t_0)\leq& \int_{t_0}^{t_1}\frac{d^2}{dt^2}z_{R_{t_1}}(t)dt\leq \,\, \left|\frac{d}{dt}z_{R_{t_1}}(t_1)-\frac{d}{dt}z_{R_{t_1}}(t_0)\right|\\
\leq& \,\, 2AR_{t_1}=2AR(\omega)+\frac{\omega E(u,v)}{2}t_1.
\end{split}
\end{equation*}
Letting $t_1\rightarrow \infty$ in the last inequality we obtain a contradiction unless $E(u,v)=0$, which implies, by Lemma \ref{grolemma} (i), that $u_0, v_0$ are both zero.
\end{proof}

\section{Appendix}

In this short Appendix we will prove that the relation \eqref{subxu} holds.

\begin{proposition}\label{AP}
We have
$$
K^+\varsubsetneq \mathcal{K}.
$$
\end{proposition}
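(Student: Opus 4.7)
The plan is to establish the inclusion $K^+ \subseteq \mathcal{K}$ and then to exhibit an element of $\mathcal{K}\setminus K^+$.

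The inclusion will rest on a set of Pohozaev--Nehari identities for the ground state $(P,Q)$. Testing \eqref{ellip1} against $(\bar{P},\bar{Q})$ and summing yields $A(P,Q)+M(P,Q) = \int(|P|^4 + 2\beta|PQ|^2 + |Q|^4)$, while the minimality of $(P,Q)$ on the Nehari manifold forces $K(P,Q)=0$, i.e.\ $\int(|P|^4+2\beta|PQ|^2+|Q|^4) = \tfrac{4}{3}A(P,Q)$. Combining the two, one records
\[
A(P,Q) = 3M(P,Q), \quad E(P,Q) = \tfrac{1}{2}M(P,Q), \quad J(P,Q)=M(P,Q), \quad M(P,Q)A(P,Q) = 6\,M(P,Q)E(P,Q).
\]
Now let $(u_0,v_0)\in K^+$. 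The hypothesis $K(u_0,v_0)\geq 0$ means $\int(|u_0|^4+2\beta|u_0v_0|^2+|v_0|^4)\leq \tfrac{4}{3}A(u_0,v_0)$, which forces $E(u_0,v_0)\geq \tfrac{1}{6}A(u_0,v_0)\geq 0$, and in particular $M(u_0,v_0)A(u_0,v_0) \leq 6\,M(u_0,v_0)E(u_0,v_0)$. On the other hand, $J(u_0,v_0) < J(P,Q) = M(P,Q)$ together with $E(u_0,v_0)\geq 0$ gives $E(u_0,v_0) < M(P,Q) - \tfrac{1}{2}M(u_0,v_0)$; multiplying by $M(u_0,v_0)$ and applying the elementary bound
\[
M(u_0,v_0)M(P,Q) - \tfrac{1}{2}M(u_0,v_0)^2 \;\leq\; \tfrac{1}{2}M(P,Q)^2,
\]
which is simply $(M(u_0,v_0)-M(P,Q))^2\geq 0$, delivers \eqref{EM}. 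Combining this with $MA\leq 6ME$ and $M(P,Q)A(P,Q) = 6\,M(P,Q)E(P,Q)$ then yields \eqref{EM1}.

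For the strict containment, the key point is that $\mathcal{K}$ is invariant under the scaling \eqref{scalsym} whereas $K^+$ is not (since $J$ contains the non-invariant term $\tfrac{1}{2}M$). Concretely, for any $a\in(0,1)$ the pair $(aP,aQ)$ lies in $K^+\cap \mathcal{K}$: direct computation using the identities above yields
\[
K(aP,aQ) \;=\; 2a^2(1-a^2)A(P,Q) \;>\; 0, \qquad J(aP,aQ) \;=\; \tfrac{a^2(2-a^2)}{3}A(P,Q) \;<\; J(P,Q),
\]
while the conditions \eqref{EM}, \eqref{EM1} reduce respectively to $(1-a^2)^2(1+2a^2) > 0$ and $a^4 < 1$, both trivially valid. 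Applying the scaling $(u_0^\lambda,v_0^\lambda)(x) = (\lambda u_0(\lambda x), \lambda v_0(\lambda x))$ with $(u_0,v_0) = (aP,aQ)$, the resulting pair $(u_0^\lambda,v_0^\lambda)$ remains in $\mathcal{K}$ for every $\lambda>0$ by scale invariance of $ME$ and $MA$. However,
\[
J(u_0^\lambda,v_0^\lambda) \;=\; \lambda E(u_0,v_0) + \frac{1}{2\lambda}M(u_0,v_0) \;\longrightarrow\; +\infty \quad \text{as } \lambda\to\infty,
\]
so for $\lambda$ sufficiently large $J(u_0^\lambda,v_0^\lambda)\geq J_0$, i.e.\ $(u_0^\lambda,v_0^\lambda)\notin K^+$. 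The only mildly nontrivial ingredient is the derivation of the ground-state identities in the first paragraph; after these are in place, the rest is elementary manipulation of the conserved quantities together with the scaling mismatch between $\mathcal{K}$ and $K^+$.
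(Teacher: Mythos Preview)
Your proof is correct and follows essentially the same approach as the paper: both arguments rest on the ground-state identities $A(P,Q)=3M(P,Q)$, $E(P,Q)=\tfrac12 M(P,Q)$, $J(P,Q)=M(P,Q)$, then bound $M(u_0,v_0)E(u_0,v_0)$ and $M(u_0,v_0)A(u_0,v_0)$ via an AM--GM type inequality (your $(M(u_0,v_0)-M(P,Q))^2\geq0$ is precisely the paper's $2\sqrt{ab}\leq a+b$), and finally exploit the scaling mismatch between $J$ and the scale-invariant quantities $ME$, $MA$ to produce an element of $\mathcal{K}\setminus K^+$. The only cosmetic differences are that you derive \eqref{EM1} by chaining $MA\leq 6ME$ with the already-established \eqref{EM} (the paper applies AM--GM to $\tfrac16 A$ and $\tfrac12 M$ directly), and for the strict containment you work with the explicit seed $(aP,aQ)$ whereas the paper takes an arbitrary nonzero element of $\mathcal{K}$ and invokes Lemma~\ref{grolemma}(i) to ensure $E>0$; your explicit verification that $(aP,aQ)\in K^+$ is correct but unnecessary for the argument.
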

\begin{proof}
The proof that $K^+\subset \mathcal{K}$ is similar to that in the Appendix of \cite{Xu12}, where the author gave the proof in the case of the 3D cubic Schr\"odinger equation, comparing his result with the one in \cite{dhr}. Indeed, from the results in \cite[Section 3]{fm} (see also \cite[Remark 2.5]{P2015}), we have
\begin{equation}\label{appendix1}
A(P,Q)=3M(P,Q), \qquad \|P\|_{L^4}^4+2\beta\|PQ\|_{L^2}^2+\|Q\|_{L^4}^4=4M(P,Q).
\end{equation}
These two equalities combine to give
\begin{equation}\label{appendix2}
E(P,Q)=\dfrac{1}{2}M(P,Q)
\end{equation}
and
\begin{equation}\label{appendix3}
J(P,Q)=E(P,Q)+\dfrac{1}{2}M(P,Q)=M(P,Q).
\end{equation}
Now assume that $(u_0,v_0)\in K^+\setminus\{(0,0)\}$. Then,
$$
E(u_0,v_0)=\dfrac{1}{6}A(u_0,v_0)+\dfrac{1}{6}K(u_0,v_0)\geq0.
$$
By using that geometric mean is no larger than arithmetic mean, that is, $2\sqrt{ab}\leq a+b$, for any $a,b\geq0$, we obtain
$$
2\sqrt{\dfrac{1}{2}M(u_0,v_0) E(u_0,v_0)}\leq \dfrac{1}{2}M(u_0,v_0)+ E(u_0,v_0)=J(u_0,v_0)<J(P,Q)=M(P,Q),
$$
where we have used \eqref{appendix3} in the last equality.
Hence, in view of \eqref{appendix2},
\begin{equation}\label{appendix4}
M(u_0,v_0) E(u_0,v_0)<\dfrac{1}{2}M(P,Q)^2=M(P,Q)E(P,Q).
\end{equation}
On the other hand,
\[
\begin{split}
2\sqrt{\dfrac{1}{6}A(u_0,v_0)\cdot \dfrac{1}{2} M(u_0,v_0)}& \leq \dfrac{1}{6}A(u_0,v_0)+\dfrac{1}{2} M(u_0,v_0)\\
&\leq \dfrac{1}{6}A(u_0,v_0)+\dfrac{1}{2} M(u_0,v_0)+\dfrac{1}{6}K(u_0,v_0)\\
&=E(u_0,v_0)+\dfrac{1}{2} M(u_0,v_0)\\
&=J(u_0,v_0)<J(P,Q)=M(P,Q).
\end{split}
\]
Therefore, in view of \eqref{appendix1},
\begin{equation}\label{appendix5}
M(u_0,v_0) A(u_0,v_0)<3M(P,Q)^2=M(P,Q)A(P,Q).
\end{equation}
Identities \eqref{appendix4} and \eqref{appendix5} then show that $(u_0,v_0)\in \mathcal{K}$.

To see that $K^+\neq\mathcal{K}$, the scaling invariance of the quantities $M(u,v)E(u,v)$ and $M(u,v)A(u,v)$ play a crucial role. Indeed, fix any $(u_0,v_0)\in \mathcal{K}\setminus\{(0,0)\}$. Then, as we already observed in the introduction, it is clear that, for any $\lambda>0$,
$$
(u_{0\lambda},v_{0\lambda})\equiv(\lambda u_0(\lambda \cdot),\lambda v_0(\lambda \cdot))\in \mathcal{K}.
$$
On the other hand,
$$
J(u_{0\lambda},v_{0\lambda})=E(u_{0\lambda},v_{0\lambda})+\dfrac{1}{2}M(u_{0\lambda},v_{0\lambda})=\lambda E(u_0,v_0)+\dfrac{1}{2\lambda}M(u_0,v_0).
$$
From Lemma \ref{grolemma} (i), we obtain $E(u_{0},v_{0})>0$. As a consequence, $J(u_{0\lambda},v_{0\lambda})\to\infty$, as $\lambda\to \infty$, which means that, for $\lambda$ sufficiently large, $(u_{0\lambda},v_{0\lambda})$ cannot satisfy $J(u_{0\lambda},v_{0\lambda})<J_0$ and then it does not belong to $K^+$. Thus Proposition \ref{AP} is proved.
\end{proof}

\section*{Acknowledgement}

L.G. Farah was partially supported by CNPq/Brazil and FAPEMIG/Brazil. A. Pastor was partially supported by CNPq/Brazil and FAPESP/Brazil.

\bibliographystyle{mrl}

\end{document}